\author{H. Egger \and L. Sch\"obel-Kr\"ohn}
\address{Department of Mathematics, TU Darmstadt, Germany}
\email{egger@mathematik.tu-darmstadt.de}
\email{schoebel-kroehn@mathematik.tu-darmstadt.de}
\title[Chemotaxis on networks]{Chemotaxis on networks:\\Analysis and numerical approximation}
\newtheorem{lemma}{Lemma}[section]
\newtheorem{problem}[lemma]{Problem}
\newtheorem{theorem}[lemma]{Theorem}
\theoremstyle{definition}
\newtheorem{remark}[lemma]{Remark}
\newtheorem*{example*}{Example}
\def\dt{\partial_t}
\def\dtt{\partial_{tt}}
\def\div{\operatorname{div}}
\def\dn{\partial_n}
\def\dx{\partial_x}
\def\dxx{\partial_{xx}}
\def\dtau{d_\tau}
\def\E{\mathcal{E}}
\def\G{\mathcal{G}}
\def\V{\mathcal{V}}
\def\RR{\mathbb{R}}
\def\eps{\epsilon}
\def\la{\langle}
\def\ra{\rangle}
\numberwithin{equation}{section}
\numberwithin{table}{section}
\numberwithin{figure}{section}
\begin{document}

\begin{abstract} 
We consider the Keller-Segel model of chemotaxis on one-dimensional networks.
Using a variational characterization of solutions, positivity preservation, conservation of mass, and energy estimates, we establish global existence of weak solutions and uniform bounds. This extends related results of Osaki and Yagi to the network context. We then analyze the discretization of the system by finite elements and an implicit time-stepping scheme. Mass lumping and upwinding are used to guarantee the positivity of the solutions on the discrete level. This allows us to deduce uniform bounds for the numerical approximations and to establish order optimal convergence of the discrete approximations to the continuous solution without artificial smoothness requirements. In addition, we prove convergence rates under reasonable assumptions. Some numerical tests are presented to illustrate the theoretical results.
\end{abstract}

\maketitle

\begin{quote}
\noindent 
{\small {\bf Keywords:} 
Chemotaxis, 
partial differential equations on networks,
global solutions,
finite elements,
mass lumping,
upwind discretization
}
\end{quote}

\begin{quote}
\noindent
{\small {\bf AMS-classification (2000):}
35K15, 
35R02, 
65M60, 
92C17  
}
\end{quote}


\section{Introduction}

Back in the 1970s, Keller and Segel \cite{KellerSegel71} introduced their celebrated model of chemotaxis 
describing the collective movement of cellular organisms in response to the distribution of a chemical substance.
According to \cite{ChildressPercus81,HillenPainter09}, the {\em minimal system} given by
\begin{align*}
\dt u - \div (\alpha \nabla u - \chi u \nabla c) &= 0, \\
\dt c - \div(\beta \nabla c) +  \gamma c &= \delta u,
\end{align*}
serves as a prototype for studying many interesting mathematical aspects of chemotaxis.  
In the context of biological applications, $u$ denotes the density of the population of interest and $c$ is the concentration of the chemoattractant. 
The differential equations are usually augmented by homogeneous Neumann boundary conditions $\dn u = \dn c = 0$ 
which leads to global conservation of the population and to preservation of positivity in both variables.

The analysis of this parabolic-parabolic model of chemotaxis is well understood by now. 
Local existence of a unique solution was proven by Yagi in \cite{Yagi97} and global existence was established by Nagai, Senba, and Yoshida \cite{NagaiSenbaYoshida97} under a smallness condition on the initial data. 
For large initial mass, blow-up in finite time was demonstrated in \cite{HerreroVelazquez97,JaegerLuckhaus92} for dimension $d \ge 2$.
Finite-time blow-up cannot occur in dimension $d=1$ and thus solutions for the nonlinear parabolic system exist globally in time \cite{HillenPainter01,OsakiYagi01}. 
We refer to \cite{HillenPainter09,Horstmann03} for an overview about various models of chemotaxis and further theoretical results.

Apart from the analysis, also the numerical approximation of chemotaxis models has attracted significant interest in the literature.
Nakaguchi and Yagi \cite{NakaguchiYagi01} proposed fully discrete schemes obtained by Galerkin approximation in space and Runge-Kutta methods in time, and they established convergence rates under appropriate smoothness assumptions on the true solution. 
Filbet \cite{Filbet06} studied finite volume approximations and proved convergence also for non-smooth solutions.
Higher order schemes have been studied in \cite{ChertockEpshteynHuKurganov17,Epshteyn09}.  
In order to retain the conservation of mass and the positivity of the numerical solution, 
Saito \cite{Saito12} proposed and analyzed an upwind finite element scheme
for multidimensional problems; a slightly different approach was considered \cite{StrehlSokolovKuzminHorstmannTurek13}.
Let us also mention recent work \cite{BorscheKlarPham16,Gosse12,NataliniRibot12} concerning the approximation of hyperbolic models of chemotaxis.

In this paper, we study chemotaxis on one-dimensional networks modeled by a system of partial differential-algebraic equations on finite metric graphs \cite{Mugnolo14}.
A one-dimensional version of the minimal system is imposed on every edge of the graph and complemented by algebraic coupling conditions at the vertices to ensure continuity of the solution and conservation of mass across junctions.
A corresponding system has been proposed and investigated by Borsche et al. \cite{BorscheGoettlichKlarSchillen14}, who considered a positivity preserving finite volume discretization and established the well-posedness of their scheme.
Let us also mention recent work by Camilli and Corrias \cite{CamilliCorrias17},
who considered problems with constant coefficients and proved well-posedness by 
perturbation arguments and using the mapping properties of the heat semi-group on networks. 
Numerical methods for hyperbolic models of chemotaxis on networks were investigated by Bretti et al. \cite{BrettiNataliniRibot14}. 

\medskip 

Before we proceed, let us briefly discuss the main contributions of our manuscript: 
By extending the functional analytic framework of Osaki and Yagi \cite{OsakiYagi01}, 
we consider the chemotaxis problem on networks as a semilinear parabolic system which allows us to establish existence of a local solution by Galerkin approximation, energy estimates, and perturbation arguments. 
Following the ideas of \cite{HillenPotapov04,OsakiYagi01}, we further show that the solution remains positive,  provided that the initial values are positive, and we prove that the total mass of the population is conserved for all time. This yields uniform a-priori estimates for the $L^1$-norm of the density $u$ and allows us to derive sharper energy estimates by which we can show that the solution can at most grow polynomially in time and hence exists globally.
These results can be seen as a natural generalization of those in \cite{HillenPotapov04,OsakiYagi01} to one-dimensional networks. However, we use somewhat different energy estimates in our proofs which allows us to apply our analysis also to problems with discontinuous model parameters and to networks of rather general topology. 
Our method of proof also differs from that in \cite{CamilliCorrias17} and our results are more general, in particular, 
our analysis covers the case of non-constant and discontinuous coefficients.
As preparation for the second part of the manuscript, we also establish higher regularity of solutions.

After having proved the global existence and uniqueness of solutions, we 
turn to their systematic numerical approximation. For the discretization, we here consider 
a Galerkin approximation in space by finite elements combined with an implicit time-stepping scheme.
In order to ensure positivity of the discrete solutions, we employ a mass-lumping strategy and an upwind discretization 
for the convective term. The resulting scheme has a similar structure as that considered 
by Saito \cite{Saito12} for chemotaxis problems in multiple dimensions, but the formulation of 
our scheme is closer to that of the continuous problem, which facilitates the analysis substantially. 
Some alternative but related approaches can be found in \cite{Filbet06} and \cite{StrehlSokolovKuzminHorstmannTurek13}.
Using similar methods of proof as on the analytical level, we derive uniform bounds for the discrete approximations 
and we establish convergence of the numerical solution and order optimal convergence rates under reasonable smoothness assumptions. 
Our analysis is somewhat sharper and more general than that presented in \cite{Saito12}. 
In particular, we do not require a strong restriction on the time step to guarantee the stability of our fully discrete scheme and we obtain convergence in the general case without artificial smoothness assumptions.

\medskip 

The remainder of the manuscript is organized as follows:
In Section~\ref{sec:problem}, we introduce our notation and the problem under investigation, and we give a variational characterization of solutions which will  be the basis for the rest of the manuscript.
In Section~\ref{sec:wellposed}, we establish the existence and uniqueness of solutions and derive uniform bounds that grow at most polynomially in time. In addition, we prove higher regularity of solutions under natural smoothness and compatibility conditions
on the initial data.
The numerical approximation is introduced in Section~\ref{sec:discretization}
and we establish uniform global bounds for the discrete solutions.
In Sections~\ref{sec:convergence} and \ref{sec:rates}, we prove the convergence of discrete solutions to the true
solution and we establish order optimal convergence rates under reasonable smoothness assumptions. 
For illustration of our theoretical findings, we present some numerical tests
in Section~\ref{sec:numerics} and we close the presentation with a short summary 
and a discussion of possible directions for future research.

\section{Preliminaries} \label{sec:problem}

We start by introducing our notation and then formally state the chemotaxis problem on the network to be considered for the rest of the paper.

\subsection{Network topology} 

Let $(\V,\E)$ be a finite directed and connected graph \cite{Berge} with vertices $v \in \V$ and edges $e \in \E$. To any edge $e=(v_1,v_2)$ pointing from vertex $v_1$ to $v_2$, we set $n_e(v_1)=-1$ and $n_e(v_2)=1$. The matrix with entries $N_{ij} = n_{e_j}(v_i)$ then is the incidence matrix of the graph. For any $v \in V$, we denote by $\E(v)=\{e \in \E: e=(v,\cdot) \text{ or } e=(\cdot,v) \}$ the set of edges starting or ending at $v$, and for $e \in \E$ we define $\V(e)=\{v \in \V: e=(v,\cdot) \text{ or } e=(\cdot,v)\}$. We further denote by $\V_b=\{v \in \V : |\E(v)|=1\}$ the set of boundary vertices and call $\V_0=\V \setminus \V_b$ the set of interior vertices. 
A small example illustrating our notation is presented in Figure~\ref{fig:graph}.
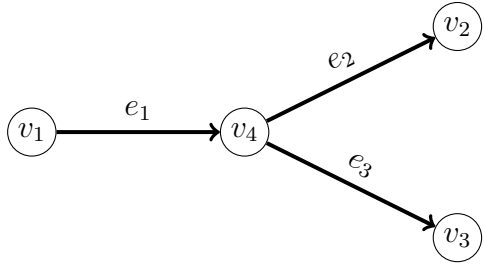
\begin{figure}[ht!]
\begin{minipage}[c]{.3\textwidth}
\hspace*{-2.5em}
\begin{tikzpicture}[scale=.7]
\node[circle,draw,inner sep=2pt] (v1) at (0,2) {$v_1$};
\node[circle,draw,inner sep=2pt] (v4) at (4,2) {$v_4$};
\node[circle,draw,inner sep=2pt] (v2) at (8,4) {$v_2$};
\node[circle,draw,inner sep=2pt] (v3) at (8,0) {$v_3$};
\draw[->,thick,line width=1.5pt] (v1) -- node[above] {$e_1$} ++(v4);
\draw[->,thick,line width=1.5pt] (v4) -- node[above,sloped] {$e_2$} ++(v2);
\draw[->,thick,line width=1.5pt] (v4) -- node[above,sloped] {$e_3$} ++(v3);
\end{tikzpicture}
\end{minipage}
\caption{Graph $\G=(\V,\E)$ with vertices $\V=\{v_1,v_2,v_3,v_4\}$ and edges $\E=\{e_1,e_2,e_3\}$
defined by $e_1=(v_1,v_4)$, $e_2=(v_4,v_2)$, and $e_3=(v_4,v_3)$. 
Here $\V_0=\{v_4\}$, $\V_b=\{v_1,v_2,v_3\}$, $\E(v_1)=\{e_1\}$, $\E(v_2)=\{e_2\}$, $\E(v_3)=\{e_3\}$, and $\E(v_4)=\{e_1,e_2,e_3\}$.
The non-zero entries of the incidence matrix are $n_{e_1}(v_1)=n_{e_2}(v_4)=n_{e_3}(v_4)=-1$ and $n_{e_1}(v_4)=n_{e_2}(v_2)=n_{e_3}(v_3)=1$.\label{fig:graph}} 
\end{figure}

\subsection{Function spaces}

To any edge $e \in \E$ we associate a positive length $\ell_e>0$ and with some abuse of notation, we always identify the topological edge $e$ with the geometric interval $[0,\ell_e]$ in the sequel. Let $\ell$ be the vector with entries $\ell_e$. Following the notation of \cite{Mugnolo14}, we call the triple $\G=(\V,\E,\ell)$ a {\em metric graph}. 
We further denote by 
\begin{align*}
  L^2(\E) = \{v :  v_e = v|_e \in L^2(e) = L^2(0,\ell_e)\}
\end{align*}
the space of square integrable functions on $\E$ which is a Hilbert space when equipped with the natural scalar product
\begin{align*}
\la v,w \ra_\E = \sum_{e \in \E} \la v_e,w_e \ra_{e} = \sum_{e \in \E} \int_0^{\ell_e} v_e w_e dx.
\end{align*}
The corresponding norm is given by $\|v\|_{L^2(\E)} = \la v,v \ra_\E^{1/2}$ and the spaces $L^p(\E)$, $1 \le p \le \infty$ are defined accordingly.
In addition, we will also make use of the function space
\begin{align*}
H^1(\E) = \{w \in L^2(\E) : \dx w_e \in L^2(e) \text{ and }  w_e(v) = w_{e'}(v) \quad \forall e,e' \in \E(v), \ v \in \V_0\}
\end{align*}
consisting of continuous functions with square integrable weak derivatives.
This space is complete when equipped with the norm defined by $\|v\|_{H^1(\E)}^2 = \|v\|_{L^2(\E)}^2 + \|\dx v\|_{L^2(\E)}^2$. 
We denote by $H^1(\E)'$ the dual space of $H^1(\E)$ consisting of continuous linear functionals $l : H^1(\E) \to \RR$. Note that by continuity and density, the scalar product $\la \cdot,\cdot \ra_\E$ can be extended to the duality product on $H^1(\E)' \times H^1(\E)$, for which we use the same symbol. 

For $T>0$ and some Banach space $X$, we denote by $L^p(0,T;X)$ the space of measurable functions with values $v(t) \in X$ and with finite norm $\|v\|_{L^p(0,T;X)}^p = \int_0^T \|v(t)\|_{X}^p$. 
Spaces of differentiable functions in time are denoted by $W^{k,p}(0,T;X)$ and equipped with their natural norms.
As usual, we write $H^k(0,T;X)=W^{k,2}(0,T;X)$ for convenience. 
Let us recall that the embedding of the energy space
\begin{align*}
W(0,T)=L^2(0,T;H^1(\E)) \cap H^1(0,T;H^1(\E)')
\end{align*}
into $ C([0,T];L^2(\E))$ is continuous, which can be proven with similar arguments as on single intervals. 
As a consequence, the evaluation $v(t)$ is well-defined for functions $v \in W(0,T)$ and one has a uniform bound
\begin{align*}
\|v\|_{L^\infty(0,T;L^2(\E))} \le C (\|v\|_{L^2(0,T;H^1(\E))}+\|\dt v\|_{L^2(0,T;H^1(\E)')})
\end{align*}
with a constant $C$ independent of $v$; we refer to \cite{Evans10} for details and further references.

\subsection{Problem statement}

Let $\G=(\V,\E,\ell)$ be a finite directed metric graph as introduced above.
On every edge $e \in \E$, the chemotactic movement shall be described by 
\begin{alignat}{2}
\dt u_e - \dx ( \alpha_e \dx u_e - \chi_e u_e \dx c_e) &= 0, \quad \qquad && e \in \E, \ t>0, \label{eq:sys1}\\
\dt c_e - \dx (\beta_e \dx c_e) + \gamma_e c_e &= \delta_e u_e, \qquad && e \in \E, \ t>0,   \label{eq:sys2}
\end{alignat}
with model parameters $\alpha,\beta,\gamma,\delta,\chi$ to be specified below. 
Recall that $f_e=f|_e$ denotes the restriction of a function $f$ onto the edge $e$. 
In addition to the above equations, we assume that the solution is continuous across vertices, i.e., 
\begin{alignat}{2}
u_e(v) = u_{e'}(v), \qquad c_e(v) = c_{e'}(v), \qquad e,e' \in \E(v), \ v \in \V_0, \ t>0, \label{eq:sys3}
\end{alignat}
and we require that the population and concentration are conserved at all vertices, i.e., 
\begin{alignat}{2} 
\sum_{e \in \E(v)} (\alpha_e(v) \dx u_e(v) - \chi_e(v) u_e(v) \dx c_e(v)) n_e(v) &= 0, \qquad && v \in \V_0 \cup \V_b, \ t>0, \label{eq:sys4}\\
\sum_{e \in \E(v)} \beta_e(v) \dx c_e(v) n_e(v) &= 0, \qquad && v \in \V_0 \cup \V_b, \ t>0. \label{eq:sys5}
\end{alignat}
These conditions imply that no mass is gained or lost at interior vertices $v \in \V_0$ or across the boundary $v \in \V_b$ of the network. 
To complete the definition of our model problem, we finally assume to have knowledge of the initial values 
\begin{align}
u_e(0) = u_{e,0}, \qquad c_e(0) = c_{e,0}, \qquad e \in \E.   \label{eq:sys6}
\end{align}
Any pair of sufficiently regular functions $(u,c)$, for instance, continuously differentiable in time and twice continuously differentiable in space on every edge, will be called a \emph{regular solution} of \eqref{eq:sys1}--\eqref{eq:sys6} on $[0,T]$, 
if it satisfies the equations in a pointwise sense.

\subsection{Variational characterization of solutions}

Throughout our analysis, we will make use of the following weak characterization of regular solutions. 
\begin{lemma} \label{lem:weak} 
Let $(u,c)$ be a regular solution of \eqref{eq:sys1}--\eqref{eq:sys6} on $[0,T]$. 
Then 
\begin{alignat}{2}
\la\dt u(t),v\ra_\E + \la\alpha \dx u(t),\dx v\ra_\E &= \la\chi u(t) \dx c(t), \dx v\ra_\E,  \label{eq:var1}\\
\la\dt c(t),q\ra_\E + \la\beta \dx c(t),\dx q\ra_\E + \la\gamma c(t), q\ra_\E &= \la\delta u(t),q\ra_\E, \label{eq:var2}
\end{alignat}
for all test functions $v,q \in H^1(\E)$ and all points $t \in [0,T]$ in time. 
\end{lemma}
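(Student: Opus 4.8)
The plan is to derive the variational identities \eqref{eq:var1}--\eqref{eq:var2} directly from the pointwise equations \eqref{eq:sys1}--\eqref{eq:sys2} by testing on each edge, integrating by parts, and then summing over all edges while exploiting the coupling conditions \eqref{eq:sys3}--\eqref{eq:sys5} to control the vertex contributions. First I would fix $t \in [0,T]$ and a test function $v \in H^1(\E)$. Since $(u,c)$ is a regular solution, on each edge $e$ the function $u_e$ is twice continuously differentiable in space, so I can multiply \eqref{eq:sys1} by $v_e$, integrate over $e = [0,\ell_e]$, and integrate by parts once in $x$. This produces $\la \dt u_e(t), v_e\ra_e + \la \alpha_e \dx u_e(t), \dx v_e\ra_e - \la \chi_e u_e(t)\dx c_e(t), \dx v_e\ra_e$ plus a boundary term of the form $-[(\alpha_e \dx u_e(t) - \chi_e u_e(t)\dx c_e(t)) v_e]_{0}^{\ell_e}$.

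The key bookkeeping step is to rewrite this boundary term using the normal vectors: the contribution at an endpoint $v \in \V(e)$ equals $(\alpha_e(v)\dx u_e(v) - \chi_e(v) u_e(v)\dx c_e(v)) v_e(v)\, n_e(v)$. Summing over all edges $e \in \E$ and regrouping the sum by vertices, the total boundary contribution becomes
\[
  \sum_{v \in \V_0 \cup \V_b} \ \sum_{e \in \E(v)} \bigl(\alpha_e(v)\dx u_e(v) - \chi_e(v) u_e(v)\dx c_e(v)\bigr)\, v_e(v)\, n_e(v).
\]
For interior vertices $v \in \V_0$, continuity of $v \in H^1(\E)$ gives $v_e(v) = v_{e'}(v) =: v(v)$, so this common value factors out of the inner sum and the remaining factor vanishes by the conservation condition \eqref{eq:sys4}; for boundary vertices $v \in \V_b$ the inner sum has a single term which again vanishes by \eqref{eq:sys4}. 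Hence all vertex terms cancel and we are left exactly with \eqref{eq:var1} after using the definitions of $\la\cdot,\cdot\ra_\E$ and noting $\la\dt u(t),v\ra_\E$ coincides with the duality pairing since $\dt u(t)\in L^2(\E)$. The argument for \eqref{eq:var2} is identical: test \eqref{eq:sys2} with $q_e$, integrate by parts, keep the zeroth-order term $\la\gamma_e c_e(t), q_e\ra_e$ untouched, and eliminate the vertex terms using continuity of $q$ together with \eqref{eq:sys5}.

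I do not expect a genuine obstacle here, since all manipulations are justified by the assumed classical regularity of the solution; the only point requiring mild care is the reorganization of the sum of edge-boundary terms into a sum over vertices and the correct tracking of the signs $n_e(v)$, which is precisely where the coupling conditions are used. One should also remark that the identities hold for \emph{every} $v,q\in H^1(\E)$ by the stated definition of that space, with no density argument needed, since the right-hand sides and bilinear forms are already well-defined and continuous on $H^1(\E)$.
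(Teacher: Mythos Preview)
Your proposal is correct and follows essentially the same approach as the paper's own proof: test the pointwise equations edge by edge, integrate by parts, rewrite the endpoint contributions via the signs $n_e(v)$, regroup the resulting sum over vertices, and use continuity of the test function together with the coupling conditions \eqref{eq:sys4}--\eqref{eq:sys5} to annihilate the vertex terms. The only cosmetic differences are that the paper derives \eqref{eq:var2} first and then says \eqref{eq:var1} follows analogously, and it treats interior and boundary vertices uniformly (since \eqref{eq:sys4}--\eqref{eq:sys5} are imposed at all $v\in\V_0\cup\V_b$) rather than separating the two cases as you do.
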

\begin{proof}
Let us start with the second identity. Multiplication of \eqref{eq:sys2} by a test function $q_e$ on every edge $e$, integration over $e$, and summation over all edges $e \in \E$ leads to
\begin{align*}
\la\dt c,q\ra_\E + \la\gamma c,q\ra_\E - \la\delta u,q\ra_\E &= \la\dx (\beta \dx c),q\ra_\E.
\end{align*}
Via integration-by-parts on every edge $e$, the last term can be transformed to
\begin{align*}
\la\dx (\beta \dx c),q\ra_\E &= \sum_{e \in \E} -\la\beta_e \dx c_e, \dx q_e\ra_e + \sum_{e \in \E} \sum_{v \in \V(e)} \beta_e(v) \dx c_e(v) q_e(v) n_e(v).
\end{align*}
Note that $f_e|_{v_1(e)}^{v_2(e)} = f_e(v_2)-f_e(v_1) = \sum_{v \in \V(e)} f_e(v) n_e(v)$ by definition of $n_e(v)$.
Exchanging the order of summation and using the continuity condition \eqref{eq:sys3}, which implies that $q_e(v) = q(v)$ for some $q(v)$ and all $e \in \E(v)$, the last term can be further evaluated as
\begin{align*}
\sum_{e \in \E} \sum_{v \in \V(e)} \beta_e(v) \dx c_e(v) q_e(v) n_e(v)
=\sum_{v \in \V} q(v) \sum_{e \in \E(v)}  \beta_e(v) \dx c_e(v) n_e(v)
=0.
\end{align*}
For the last equality, we made use of the coupling condition \eqref{eq:sys5}.
A combination of the above formulas already yields the second identity of the lemma,
and the first one can be derived with very similar arguments. 
\end{proof}

\begin{remark} \label{rem:weak}
The equations \eqref{eq:var1}--\eqref{eq:var2} also make sense for less regular functions, e.g., 
\begin{align*}
  u &\in L^2(0,T;H^1(\E)) \cap H^1(0,T;H^1(\E)'),  \\
  c &\in L^\infty(0,T;H^1(\E)) \cap H^1(0,T;L^2(\E)). 
\end{align*}
The particular choice of these spaces will become clear from our analysis.
Such a pair of functions $(u,c)$ which satisfies \eqref{eq:var1}--\eqref{eq:var2} for a.a. $t \in [0,T]$ will be called a \emph{weak solution} of problem \eqref{eq:sys1}--\eqref{eq:sys5}. Note that the first term in \eqref{eq:var1} has to be interpreted as a duality product here. 
By standard embedding results \cite{Evans10}, one can see that $u,c \in C([0,T];L^2(\E))$ which allows to 
satisfy the initial values in a reasonable way. 
\end{remark}

\section{Well-posedness} \label{sec:wellposed}

In order to guarantee the existence and uniqueness of solutions of problem \eqref{eq:sys1}--\eqref{eq:sys6},
we make the following assumptions on the parameters and the initial values.
\begin{itemize}
  \item[(A1)] $\alpha,\beta,\gamma,\delta,\chi \in L^\infty(\E)$ such that $\gamma,\delta \ge 0$ as well as 
  $0 < \underline \alpha \le \alpha$ and $0 < \underline \beta \le \beta$ uniformly a.a. on $\E$ for some positive constants $\underline \alpha,\underline \beta$.
  For ease of presentation, we additionally assume that the coefficients are constant on every edge $e \in \E$.
 \item[(A2)] $u_0 \in L^2(\E)$ and $c_0 \in H^1(\E)$ with $u_0 \ge 0$ and $c_0 \ge 0$
\end{itemize}
We will denote by $\overline \alpha, \overline \beta, \overline \gamma, \overline \delta, \overline \chi$ the $L^\infty$ bounds for the coefficients. 
\subsection{Local solvability} \label{sec:local}
Using standard arguments for semilinear parabolic problems, one can now establish the local well-posedness of the problem under consideration.
\begin{theorem} \label{thm:local}
Let (A1)-(A2) hold. Then there exists a time horizon $T>0$, 
depending on the geometry of the graph, on the bounds for the coefficients, and inverse monotonically 
on $\|u_0\|_{L^2(\E)}$, $\|c_0\|_{H^1(\E)}$, such that the system \eqref{eq:sys1}--\eqref{eq:sys6} has a unique local weak solution 
\begin{align*}
  u &\in L^2(0,T;H^1(\E)) \cap H^1(0,T;H^1(\E)'),  \\
  c &\in L^\infty(0,T;H^1(\E)) \cap H^1(0,T;L^2(\E)),
\end{align*}
and the norm of the solution can be bounded by the norm of the initial data.
\end{theorem}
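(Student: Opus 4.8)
The natural strategy is a fixed-point argument: decouple the system by freezing the drift in the $u$-equation, solve the resulting linear parabolic equations, and show the solution map is a contraction on a small time interval. Concretely, given some $\tilde{c} \in L^\infty(0,T;H^1(\E))$, first solve the linear, self-adjoint parabolic problem \eqref{eq:var2} for $c$ with right-hand side $\la\delta u,q\ra_\E$; then solve the linear advection-diffusion problem \eqref{eq:var1} for $u$ with the drift term $\la\chi u\,\dx\tilde{c},\dx v\ra_\E$. Both linear problems are well-posed in the stated energy classes by the standard Lions-Lax-Milgram / Galerkin theory on the Hilbert triple $H^1(\E) \hookrightarrow L^2(\E) \hookrightarrow H^1(\E)'$; the coercivity of the bilinear forms follows from (A1) (uniform positivity of $\alpha,\beta$) together with a Poincaré-type inequality on the connected metric graph $\G$, and the extra regularity $c \in L^\infty(0,T;H^1(\E)) \cap H^1(0,T;L^2(\E))$ comes from testing \eqref{eq:var2} with $\dt c$ and using $c_0 \in H^1(\E)$.

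**Key steps.** First I would set up the solution operator $\Phi: (u,c) \mapsto (\hat u,\hat c)$ on the ball $B_R = \{(u,c) : \|u\|_{L^2(0,T;H^1)} + \|\dt u\|_{L^2(0,T;H^1')} + \|c\|_{L^\infty(0,T;H^1)} + \|\dt c\|_{L^2(0,T;L^2)} \le R\}$ with $R$ chosen as a fixed multiple of $\|u_0\|_{L^2(\E)} + \|c_0\|_{H^1(\E)}$. Second, I would show $\Phi$ maps $B_R$ into itself for $T$ small: the only nonlinear/coupling term is $\la\chi u\,\dx c,\dx v\ra_\E$, which I bound using $\|\chi u\,\dx c\|_{L^2(\E)} \le \overline\chi \|u\|_{L^\infty(\E)}\|\dx c\|_{L^2(\E)}$ and the one-dimensional Sobolev embedding $H^1(\E) \hookrightarrow L^\infty(\E)$ (valid edge-by-edge, with the graph being finite); integrating in time and using the embedding $W(0,T)\hookrightarrow C([0,T];L^2(\E))$ produces a factor of $T^{1/2}$ (or $T$) on the nonlinear contribution, so for $T$ small the a priori estimate closes with the same $R$. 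Third, for contraction I would estimate the differences $\hat u_1 - \hat u_2$, $\hat c_1 - \hat c_2$ in the same norms, exploiting the bilinearity: the difference of drift terms splits as $\chi(u_1-u_2)\dx c_1 + \chi u_2\,\dx(c_1-c_2)$, each factor controlled by the $B_R$ bounds, again with a small-$T$ gain. Banach's fixed point theorem then gives existence and uniqueness in $B_R$; uniqueness in the full class follows by a Gronwall argument on any interval where two solutions coexist. The final "norm of solution bounded by norm of initial data" statement is just the $B_R$ membership.

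**Main obstacle.** The delicate point is the nonlinear term $\la\chi u\,\dx c,\dx v\ra_\E$: it involves a product of $u$ (only in $L^2(0,T;H^1)$, hence pointwise-in-time only in $L^2(\E)$ for a.a.\ $t$, via the embedding into $C([0,T];L^2)$) and $\dx c$ (in $L^\infty(0,T;L^2)$), tested against $\dx v \in L^2(\E)$ — so the product $u\,\dx c$ must land in $L^2(\E)$, which forces using $\|u\|_{L^\infty(\E)}$ rather than $\|u\|_{L^2(\E)}$. Making this rigorous requires care: one either uses $u(t) \in H^1(\E) \hookrightarrow L^\infty(\E)$ for a.a.\ $t$ and integrates $\|u(t)\|_{H^1}^2$ in time (costing one power of the $L^2_t H^1_x$ norm and yielding a $T^{1/2}$ via Hölder in time against the $L^\infty_t$ bound on $\dx c$), being careful that this does not consume the coercivity. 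The borderline nature of this estimate in 1D is exactly why the theorem only asserts *local* existence here; the global bound is deferred to later sections via mass conservation and positivity. A secondary technical nuisance is verifying the Poincaré inequality and the trace/embedding results on the metric graph, but these are routine consequences of connectedness and the edgewise one-dimensional theory, as the paper already remarks when discussing $W(0,T)$.
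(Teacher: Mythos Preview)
Your overall plan (Banach fixed point after linearization, with the nonlinear term controlled via the one-dimensional embedding $H^1(\E)\hookrightarrow L^\infty(\E)$) is correct and is the same strategy the paper follows. The execution differs, however, and the paper's version is cleaner.

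The paper does \emph{not} set up the fixed point on pairs $(u,c)$ in the full energy space. Instead it exploits the cascade structure of the coupling: given $z\in X_T:=L^\infty(0,T;L^2(\E))$, it first solves \eqref{eq:var2} with $u$ replaced by $z$ to obtain $c=c(z)$ with the improved regularity $c\in L^\infty(0,T;H^1(\E))\cap H^1(0,T;L^2(\E))$ (testing with $q=\dt c$, using $c_0\in H^1(\E)$), and then solves the linear drift--diffusion problem \eqref{eq:var1} for $u$ with this $c$ as drift. The fixed-point map is $\Phi_T:z\mapsto u$ on $X_T$ alone. The key advantage is that the smallness factor for the contraction appears in a single place: the estimate $\|\dx c(z)-\dx c(\widehat z)\|_{L^\infty(0,T;L^2(\E))}\le C\,T^{1/2}\|z-\widehat z\|_{X_T}$ (zero initial data for $c-\widehat c$), which then feeds directly into the Gronwall estimate for $u-\widehat u$ to give a Lipschitz constant $L(T)=C\,T^{1/2}e^{CT}$.

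Your map on pairs is essentially $\Phi_T$ unrolled, but the contraction is more delicate than you suggest. In the $\hat u$-component, the term $\chi\,\hat u_2\,\dx(c_1-c_2)$ does not automatically carry a small-$T$ factor: bounding $\int_0^T\|\hat u_2\|_{L^\infty(\E)}^2\|\dx(c_1-c_2)\|_{L^2(\E)}^2\,dt$ by $\|\hat u_2\|_{L^2(0,T;H^1)}^2\|\dx(c_1-c_2)\|_{L^\infty(0,T;L^2)}^2$ yields no power of $T$. You would need either the sharper Gagliardo--Nirenberg interpolation $\|\hat u_2\|_{L^\infty(\E)}^2\le C\|\hat u_2\|_{L^2(\E)}\|\hat u_2\|_{H^1(\E)}$ (plus H\"older in $t$) to extract a $T^{1/2}$, or to show that $\Phi^2$ rather than $\Phi$ is a contraction---which is precisely the paper's composition $z\mapsto c(z)\mapsto u$. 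Two further minor points: your first paragraph has the order of the two linear solves reversed (you cannot solve for $c$ from \eqref{eq:var2} before $u$ is available); and ``coercivity via Poincar\'e'' is not quite right, since $\gamma\ge 0$ only and the $u$-equation has no zero-order term---the paper uses the G\aa rding-type energy estimate plus Gronwall, as you also invoke later.
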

A detailed proof is given in the appendix. 
Let us mention already here that the particular functional analytic setting allows to consider \eqref{eq:sys1}--\eqref{eq:sys6} as a semilinear parabolic system and the result can thus be proven by a fixed-point argument. 
The positivity of the initial values in assumption (A2) is not required for the local existence.

\subsection{Global solutions} \label{sec:global}
As a next step, we now show that the norm of the solution does not blow up in finite time and, therefore, the solution exists globally.
\begin{theorem} \label{thm:global}
Let (A1)--(A2) hold.
Then the local weak solution $(u,c)$ of \eqref{eq:sys1}--\eqref{eq:sys2} with initial values $u(0)=u_0$ and $c(0)=c_0$ 
satisfies $u(t)\ge 0$, $c(t)\ge 0$ for $0 \le t \le T$, and
\begin{align*}
\|u\|_{L^\infty(0,t;L^2(\E))} + \|c\|_{L^\infty(0,t;H^1(\E))} \le P(t).
\end{align*}
Here $P(t)$ is a polynomial in $t$ with coefficients that depend only on the 
bounds for the parameters in (A1), on the geometry of the graph, and on $\|u_0\|_{L^2(\E)}$ and $\|c_0\|_{H^1(\E)}$.
\end{theorem}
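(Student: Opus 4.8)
The plan is to follow the classical Osaki–Yagi strategy, adapted to the metric graph. First I would establish positivity: test \eqref{eq:var1} with $v = u^- = \max(-u,0)$ (the negative part, which lies in $H^1(\E)$ since $H^1(\E)$ is a lattice), and test \eqref{eq:var2} with $q = c^-$. For the $c$-equation this immediately gives $\tfrac12\dt\|c^-\|_{L^2(\E)}^2 + \underline\beta\|\dx c^-\|_{L^2(\E)}^2 + \la\gamma c^-,c^-\ra_\E \le \la\delta u, c^-\ra_\E$; as long as we have already shown $u\ge 0$ this right-hand side is $\le 0$ (since $u\ge 0$ and $c^-\ge 0$ means $\la\delta u, c^-\ra$ would have the wrong sign — more carefully, on $\{c<0\}$ we get $-\la\delta u, c^-\ra\le 0$), and with $c^-(0)=0$ Gronwall yields $c^-\equiv 0$. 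For the $u$-equation, testing with $u^-$ gives $\tfrac12\dt\|u^-\|^2 + \underline\alpha\|\dx u^-\|^2 \le \la\chi u^-\dx c, \dx u^-\ra$ on $\{u<0\}$; one bounds the cross term using the already-established bound on $\|\dx c\|$ together with a Gagliardo–Nirenberg / Ladyzhenskaya inequality on the graph ($\|w\|_{L^4(\E)}^2 \le C\|w\|_{L^2(\E)}\|w\|_{H^1(\E)}$, valid edgewise and summed), absorb the resulting $\|\dx u^-\|^2$ into the left side, and apply Gronwall again with $u^-(0)=0$. A subtlety is that positivity of $c$ and $u$ are mildly entangled, so I would run the two arguments simultaneously on the coupled quantity $\|u^-\|^2 + \|c^-\|^2$.

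Next, with $u\ge 0$ in hand, conservation of mass follows by testing \eqref{eq:var1} with $v\equiv 1 \in H^1(\E)$: the flux and chemotactic terms vanish by the coupling conditions, so $\dt\int_\E u\,dx = 0$, hence $\|u(t)\|_{L^1(\E)} = \int_\E u(t) = \int_\E u_0 = \|u_0\|_{L^1(\E)} =: m$ for all $t$. Testing \eqref{eq:var2} with $q\equiv 1$ then gives $\dt\int_\E c + \la\gamma c,1\ra = \la\delta u,1\ra \le \overline\delta m$, so $\|c(t)\|_{L^1(\E)}$ grows at most linearly in $t$.

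The heart of the matter is the energy estimate giving at most polynomial growth. Following Osaki–Yagi, I would test \eqref{eq:var1} with $v = u$ to get $\tfrac12\dt\|u\|_{L^2(\E)}^2 + \underline\alpha\|\dx u\|_{L^2(\E)}^2 \le \la\chi u\,\dx c, \dx u\ra_\E \le \overline\chi\|u\|_{L^4(\E)}\|\dx c\|_{L^4(\E)}\|\dx u\|_{L^2(\E)}$, and test \eqref{eq:var2} differentiated in space — i.e. test with $q = -\dxx c$, or more robustly work with $\|\dx c\|_{L^2(\E)}^2$ by testing \eqref{eq:var2} with $q = c$ and also estimating $\dt\|\dx c\|^2$ — to control $\|\dx c\|$ in terms of $\|u\|_{L^2(\E)}$. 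The one-dimensional Gagliardo–Nirenberg inequalities $\|u\|_{L^4(\E)}^2 \lesssim \|u\|_{L^1(\E)}\|u\|_{H^1(\E)} + \|u\|_{L^1(\E)}^2$ and $\|\dx c\|_{L^4(\E)} \lesssim \|\dx c\|_{L^2(\E)}^{1/2}\|c\|_{H^2(\E)}^{1/2} + \|c\|_{H^1(\E)}$ — which hold on each edge and sum over $\E$, the graph continuity at interior vertices not being needed for these — together with the $L^1$ bound $\|u\|_{L^1} = m$ are what make the nonlinear term subcritical in $d=1$. Combining, using Young's inequality to absorb the top-order terms $\|\dx u\|_{L^2}^2$ and $\|c\|_{H^2}^2$ into the dissipation, one arrives at a differential inequality of the form $\dt\big(\|u\|_{L^2(\E)}^2 + \|\dx c\|_{L^2(\E)}^2\big) \le C_1\big(\|u\|_{L^2(\E)}^2 + \|\dx c\|_{L^2(\E)}^2\big)^{?} + C_2$, and the crucial point is that the exponent on the right is $1$ (linear), not superlinear — this is exactly the one-dimensional miracle — so Gronwall gives exponential-in-time, but a more careful bookkeeping (using that $m$ is fixed and the $c$-equation has the damping $\gamma c$ available, or iterating the estimate) improves this to polynomial growth $P(t)$.

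The main obstacle I anticipate is making the energy estimate genuinely yield polynomial rather than merely exponential growth, and doing so with coefficients that are only piecewise constant: one cannot differentiate the equations in space globally (only edgewise), so the natural route to controlling $\|\dx c\|_{L^2}$ is to test \eqref{eq:var2} with $q = \dt c$ and with $q = c$ and combine, or to use the $H^1(\E)' \to H^1(\E)$ smoothing of the Laplacian on the graph, keeping all integrations by parts on individual edges so that the vertex terms cancel via \eqref{eq:sys3}–\eqref{eq:sys5} exactly as in Lemma~\ref{lem:weak}. Bootstrapping the regularity of $c$ just enough to justify the $H^2$-type Gagliardo–Nirenberg bound on each edge (the concentration equation is linear given $u \in L^2$, so $c \in L^2(0,t;H^2(\E))$ follows from parabolic regularity) is a necessary but routine step. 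Once the differential inequality with a linear right-hand side in the energy is secured, the polynomial bound $P(t)$ and hence global existence follow by a standard continuation argument using Theorem~\ref{thm:local}.
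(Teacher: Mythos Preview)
Your framework for positivity and mass conservation matches the paper's Lemma~3.4 and Step~1 almost exactly: test \eqref{eq:var1} with $u^-$ using the local-in-time bound $\|\dx c(t)\|_{L^2(\E)}\le C$ from Theorem~\ref{thm:local} (so there is no entanglement --- $u\ge 0$ is proved first, then $c\ge 0$), and test \eqref{eq:var1}, \eqref{eq:var2} with $v=q\equiv 1$ for the $L^1$ bounds.

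The energy estimate is where your sketch diverges from the paper and where there is a genuine gap. You propose an $L^4$--$L^4$--$L^2$ H\"older split of the chemotactic term and then need $\|\dx c\|_{L^4}\lesssim \|\dx c\|_{L^2}^{1/2}\|c\|_{H^2}^{1/2}$, hence edgewise $H^2$ regularity of $c$. With piecewise-constant coefficients this is delicate (the paper explicitly avoids differentiating the equations or invoking $H^2$), and in any case your own accounting lands on a linear differential inequality, which yields only exponential growth. You then assert that ``more careful bookkeeping'' upgrades this to a polynomial bound $P(t)$, but you do not say how; this is precisely the nontrivial point, and the theorem as stated requires it.

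The paper's mechanism is different and worth knowing. Instead of $L^4$, it uses the $L^1$--based one-dimensional interpolation $\|f\|_{L^\infty(\E)}\le C_G\big(\|f\|_{L^1(\E)}^{1/3}\|\dx f\|_{L^2(\E)}^{2/3}+\|f\|_{L^1(\E)}\big)$ throughout, so that the fixed mass $\|u(t)\|_{L^1}=M$ enters multiplicatively and produces \emph{sublinear} powers of $\|\dx u\|_{L^2}$ and $\|\dx c\|_{L^2}$. Concretely: test \eqref{eq:var2} with $q=c$ to get $\int_0^t\|\dx c\|_{L^2}^2\le P_3(t)$; test \eqref{eq:var2} with $q=\dt c$ (not $-\dxx c$, which is not an admissible test function here) to get $\|\dx c(t)\|_{L^2}^2\le P_4(t)+C\int_0^t\|\dx u\|_{L^2}^{2/3}$; test \eqref{eq:var1} with $v=u$ and use $\|u\|_{L^\infty}\lesssim M^{1/3}\|\dx u\|_{L^2}^{2/3}+M$, which after Young produces $\|\dx c\|_{L^2}^6$ on the right. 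The closure is to bound $\int_0^t\|\dx c\|_{L^2}^6$ by $\|\dx c\|_{L^\infty_t L^2_x}^4\cdot\int_0^t\|\dx c\|_{L^2}^2$, insert the previous two estimates, and observe that the resulting term $\big(\int_0^t\|\dx u\|_{L^2}^2\big)^{2/3}$ is sublinear and can be absorbed into the left side by Young. No Gronwall is needed at this stage, and every bound is a polynomial in $t$. Your $L^4$ route does not obviously produce these fractional exponents, which is why it stalls at exponential growth.
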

Similarly as in \cite{HillenPotapov04,OsakiYagi01}, our proof is based on conservation and positivity preservation 
of solutions, which we state explicitly as a preparatory result.
\begin{lemma} \label{lem:cons_and_pos}
Let (A1)--(A2) hold and let $(u,c)$ be the local weak solution guaranteed by Theorem~\ref{thm:local}.
Then 
\begin{align*}
\int_\E u(t) dx = \int_\E u_0 dx, \qquad 0 \le t \le T.
\end{align*}
Moreover, the solution is positive, i.e., $u(t) \ge 0$ and $c(t) \ge 0$ on $\E$ for a.a. $0 \le t \le T$.
\end{lemma}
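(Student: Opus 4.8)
The plan is to read off conservation of mass directly from the variational identity \eqref{eq:var1} and to obtain positivity from energy estimates for the negative parts of $u$ and $c$, using the structure exposed by Lemma~\ref{lem:weak}.

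\textbf{Conservation of mass.} The constant function $\mathbf{1}$ belongs to $H^1(\E)$ and satisfies $\dx\mathbf{1}=0$. Testing \eqref{eq:var1} with $v=\mathbf{1}$ therefore annihilates both the diffusion and the convection term and leaves $\la\dt u(t),\mathbf{1}\ra_\E=0$ for a.a.\ $t\in[0,T]$. Since $u\in H^1(0,T;H^1(\E)')$, the scalar function $t\mapsto\la u(t),\mathbf{1}\ra_\E=\int_\E u(t)\,dx$ is absolutely continuous with a.e.\ derivative $\la\dt u(t),\mathbf{1}\ra_\E=0$; hence it is constant and equals $\int_\E u_0\,dx$. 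By the embedding $W(0,T)\hookrightarrow C([0,T];L^2(\E))$ recalled in Section~\ref{sec:problem}, the identity then holds for every $t\in[0,T]$.

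\textbf{Positivity of $u$.} I would test \eqref{eq:var1} with $v=u^-(t):=\min(u(t),0)$. For a.a.\ $t$ one has $u(t)\in H^1(\E)$, and truncation preserves the $H^1$-regularity and the vertex continuity, so $u^-(t)\in H^1(\E)$ with $\dx u^-=\mathbb{1}_{\{u<0\}}\dx u$ a.e.\ on $\E$; moreover $\|u^-(\cdot)\|_{H^1(\E)}\le\|u(\cdot)\|_{H^1(\E)}$ shows $u^-\in L^2(0,T;H^1(\E))$. The standard chain rule for elements of $W(0,T)$ (obtained by regularization and $W(0,T)\hookrightarrow C([0,T];L^2(\E))$) gives $\la\dt u(t),u^-(t)\ra_\E=\tfrac12\tfrac{d}{dt}\|u^-(t)\|_{L^2(\E)}^2$. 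Combined with $\la\alpha\dx u,\dx u^-\ra_\E=\la\alpha\dx u^-,\dx u^-\ra_\E\ge\underline\alpha\|\dx u^-\|_{L^2(\E)}^2$ and $u\,\dx u^-=u^-\,\dx u^-$ a.e., this leads to
\begin{align*}
\tfrac12\tfrac{d}{dt}\|u^-(t)\|_{L^2(\E)}^2+\underline\alpha\|\dx u^-(t)\|_{L^2(\E)}^2\le\la\chi\,u^-(t)\,\dx c(t),\dx u^-(t)\ra_\E .
\end{align*}
Estimating the right-hand side with $\overline\chi$, the one-dimensional interpolation inequality $\|w\|_{L^\infty(e)}\le C\|w\|_{L^2(e)}^{1/2}\|w\|_{H^1(e)}^{1/2}$ on the edges, the bound $\|\dx c(t)\|_{L^2(\E)}\le M$ stemming from $c\in L^\infty(0,T;H^1(\E))$, and Young's inequality, the gradient term on the right is absorbed into $\underline\alpha\|\dx u^-\|_{L^2(\E)}^2$ and one is left with $\tfrac{d}{dt}\|u^-(t)\|_{L^2(\E)}^2\le C(M)\|u^-(t)\|_{L^2(\E)}^2$. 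Since $u_0\ge0$ forces $u^-(0)=0$, Gronwall's lemma yields $u^-(t)=0$, i.e.\ $u(t)\ge0$, for a.a.\ $t\in[0,T]$.

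\textbf{Positivity of $c$.} With $u\ge0$ now available, I would test \eqref{eq:var2} with $q=c^-(t)\in H^1(\E)$. Using the chain rule, $c\,\dx c^-=c^-\,\dx c^-$, $\gamma c\,c^-=\gamma(c^-)^2$, and $\gamma,\delta\ge0$ together with $u\ge0$, $c^-\le0$, one obtains
\begin{align*}
\tfrac12\tfrac{d}{dt}\|c^-(t)\|_{L^2(\E)}^2+\la\beta\dx c^-(t),\dx c^-(t)\ra_\E+\la\gamma c^-(t),c^-(t)\ra_\E=\la\delta\,u(t),c^-(t)\ra_\E\le0 ,
\end{align*}
and since the two middle terms are nonnegative, $\tfrac{d}{dt}\|c^-(t)\|_{L^2(\E)}^2\le0$; because $c_0\ge0$ gives $c^-(0)=0$, this forces $c(t)\ge0$ for a.a.\ $t$. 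The main obstacle is the treatment of the nonlinear term $\la\chi u^-\dx c,\dx u^-\ra_\E$ in the estimate for $u$: the interpolation and Young inequalities must be chosen so that all $\|\dx u^-\|_{L^2(\E)}$-contributions are fully absorbed by the diffusion while the residual factor involving $\|c\|_{H^1(\E)}$ remains controlled — this is precisely where the regularity $c\in L^\infty(0,T;H^1(\E))$ from Theorem~\ref{thm:local} enters. A secondary, routine but delicate point is the justification of the chain rule $\tfrac12\tfrac{d}{dt}\|u^-(t)\|_{L^2(\E)}^2=\la\dt u(t),u^-(t)\ra_\E$ for the truncation of an element of $W(0,T)$.
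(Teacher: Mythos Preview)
Your proposal is correct and follows essentially the same route as the paper: conservation by testing \eqref{eq:var1} with $v\equiv 1$, positivity of $u$ by testing with $u^-$ and using the $L^\infty(0,T;H^1(\E))$ bound on $c$ together with a Gagliardo--Nirenberg/Young argument to absorb $\|\dx u^-\|_{L^2(\E)}$ before applying Gronwall, and positivity of $c$ by testing \eqref{eq:var2} with $c^-$ once $u\ge 0$ is known. The paper invokes its Lemma~\ref{lem:interpolation} in place of your edge-wise interpolation inequality and leaves the $c^-$ step and the chain rule to the reader, but the arguments are otherwise identical.
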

\begin{proof}
The first assertion follows by testing \eqref{eq:var1} with $v \equiv 1$.  
Now let $u^- = \min(u,0)$ denote the negative part of $u$. 
Testing equation~\eqref{eq:var1} with $v = u^-$, we conclude that
\begin{align*}
\frac{1}{2} \frac{d}{dt} \|u^-(t)\|_{L^2(\E)}^2 + \underline \alpha \|\dx u^-(t)\|_{L^2(\E)}^2 
&\le \overline\chi \|\dx c(t)\|_{L^2(\E)} \|u^-(t)\|_{L^\infty(\E)} \|\dx u^-(t)\|_{L^2(\E)} = (*).
\end{align*}
From Theorem~\ref{thm:local}, we deduce that $\overline \chi \|\dx c(t)\|_{L^2(\E)} \le C$ for a.a. $0 \le t \le T$. 
Using Lemma~\ref{lem:interpolation}, we thus obtain 
\begin{align*}
(*) &\le C \left(\eps \|\dx u^-(t)\|^2_{L^2(\E)} + C_\eps \|u^-(t)\|_{L^2(\E)}^2\right),
\end{align*}
where $C_\eps$ only depends on $\eps$ and $C_G$. Choosing $\eps=\underline\alpha/(2C)$ allows to absorb the 
first term in the left hand side of the energy estimate, and by Lemma~\ref{lem:gronwall}, we deduce that
\begin{align*} 
\|u^-(t)\|_{L^2(\E)}^2 
\le e^{2 C' t} \|u^-(0)\|_{L^2(\E)}^2 = 0;
\end{align*}
in the last identity we used that $u(0)=u_0 \ge 0$. 
This shows that $u(t) \ge 0$ on its domain of existence. 
The non-negativity of $c$ can then be derived with similar arguments.
\end{proof}

\subsection*{Proof of Theorem~\ref{thm:global}}

The positivity of the solution is already guaranteed by Lemma~\ref{lem:cons_and_pos}.
The remaining proof of the a-priori estimate then proceeds in several steps.

\smallskip

\noindent {\em Step~1:} 
As a direct consequence of Lemma~\ref{lem:cons_and_pos}, we obtain that 
\begin{align} \label{eq:aux1}
\|u(t)\|_{L^1(\E)} = \|u_0\|_{L^1(\E)} 
=: M
\end{align}
for all $0 \le t \le T$. Using this identity, we further deduce from \eqref{eq:var2}, 
by testing with $q=1$ and integration over time, that
\begin{align} \label{eq:aux2}
\|c(t)\|_{L^1(\E)} 
= \|c_0\|_{L^1(\E)} + t \overline \delta M = :P_1(t).    
\end{align}
Note that $P_1(t)$ is a polynomial in $t$ whose coefficients depend continuously on the data.

\smallskip

\noindent {\em Step~2:}
Testing \eqref{eq:var2} with $q=c(t)$ yields 
\begin{align*}
\frac{1}{2} \frac{d}{dt} &\|c(t)\|_{L^2(\E)}^2 + \underline \beta \|\dx c(t)\|_{L^2(\E)}^2 
 \le \overline \delta \|u(t)\|_{L^1(\E)} \|c(t)\|_{L^\infty(\E)} \\
&\le \overline \delta M C_G(\|c(t)\|_{L^1(\E)}^{1/3} \|\dx c(t)\|_{L^2(\E)}^{2/3} + \|c(t)\|_{L^1(\E)})
 \le P_2(t) + \frac{\underline \beta}{2} \|\dx c(t)\|_{L^2(\E)}^2.
\end{align*}
Here we used \eqref{eq:aux1} and Lemma~\ref{lem:interpolation} for the second estimate, and employed
\eqref{eq:aux2} and Young's inequality for the third estimate.
Note that $P_2(t)$ is again a polynomial of $t$ with coefficients depending continuously on the problem data.
By some elementary computations and integration with respect to time, we further obtain 
\begin{equation}\label{eq:aux3}
 \|c(t)\|_{L^2(\E)}^2 + \int_0^t \|\dx c(s)\|_{L^2(\E)}^2 ds \le P_3(t),
\end{equation}
with polynomial $P_3(t)$ depending only on $P_2(t)$ and $\underline \beta$.

\smallskip

\noindent {\em Step~3:} 
Testing equation~\eqref{eq:var2} with $q=\dt c(t)$ yields the estimate
\begin{align*}
\|\dt c(t)\|_{L^2(\E)}^2 &+ \frac{1}{2} \frac{d}{dt} \|\beta^{1/2} \dx c(t)\|_{L^2(\E)}^2 
 \le \overline \delta \|u(t)\|_{L^2(\E)} \|\dt c(t)\|_{L^2(\E)} \\
&\le \overline \delta C_G (M^{2/3} \|\dx u(t)\|^{1/3}_{L^2(\E)} + M) \|\dt c(t)\|_{L^2(\E)} \\
&\le  C_2 + C_3 \|\dx u(t)\|_{L^2(\E)}^{2/3} +\frac{1}{2} \|\dt c(t)\|_{L^2(\E)}^2,
\end{align*}
with $C_2=C_2(C_G,M,\overline \delta)$ and $C_3=C_3(C_G,M, \overline \delta)$. Here we used Lemma~\ref{lem:interpolation} 
and \eqref{eq:aux1} in the second estimate, and Young's inequality for the third. 
By integration in time, we get
\begin{equation}\label{eq:aux4}
\|\dx c(t)\|_{L^2(\E)}^2 + \int_0^t \|\dt c(s)\|_{L^2(\E)}^2 ds
\le P_4(t) + C_4 \int_0^t \|\dx u(s)\|^{2/3} ds,
\end{equation}
with constant $C_4=C_4(C_G,M,\underline \beta, \overline \beta, \overline \delta)$ and polynomial $P_4(t)$ whose coefficients 
again depend continuously on the data. 
By squaring the previous estimate, we further get 
\begin{equation}\label{eq:aux5}
\|\dx c(t)\|_{L^2(\E)}^4 
\le P_5(t) + C_5 t^{4/3} \left(\int_0^t \|\dx u(s)\|_{L^2(\E)}^2 ds\right)^{2/3}.
\end{equation}
with $P_5(t)=2 P_4^2(t)$ and $C_5=2 C_4^2$. 
In the derivation of this estimate, we used H\"older's inequality to bound 
the term $\int_0^t \|u(s)\|^{2/3} ds \le t^{2/3} (\int_0^t \|u(s)\|^2 ds)^{1/3}$ from above.

\smallskip

\noindent {\em Step~4:}
Testing equation~\eqref{eq:var1} with $v=u(t)$ leads to
\begin{align*}
\frac{1}{2} \frac{d}{dt} \|u(t)\|_{L^2(\E)}^2 &+ \underline \alpha \|\dx u(t)\|_{L^2(\E)}^2 
 \le \overline\chi \|u(t)\|_{L^\infty(\E)} \|\dx c(t)\|_{L^2(\E)} \|\dx u(t)\|_{L^2(\E)}\\
&\le \overline\chi C_G (M^{1/3} \|\dx u(t)\|_{L^2(\E)}^{2/3} + M) \|\dx c(t)\|_{L^2(\E)} \|\dx u(t)\|_{L^2(\E)} \\
&\le  C_5 \left(\|\dx c(t)\|_{L^2(\E)}^6 + \|\dx c(t)\|_{L^2(\E)}^2 \right)+ \frac{\underline\alpha}{2}\|\dx u(t)\|_{L^2(\E)}^2,
\end{align*}
with constant $C_5=C_5(\underline\alpha,\overline\chi, C_G, M)$.
From \eqref{eq:aux4} and \eqref{eq:aux5}, we can deduce that
\begin{align*}
\int_0^t &\|\dx c(s)\|_{L^2(\E)}^6 ds
 \le \int_0^t \left(P_5(s) + C_5 s^{4/3} \left(\int_0^s \|\dx u(r)\|_{L^2(\E)}^2 dr\right)^{2/3}\right) \|\dx c(s)\|_{L^2(\E)}^2 ds \\
&\le P_5(t)\int_0^t \|\dx c(s)\|_{L^2(\E)}^2 ds + C_5 \left(\int_0^t \|\dx u(s)\|_{L^2(\E)}^2 ds\right)^{2/3} \left(\int_0^t s^{4/3} \|\dx c(s)\|_{L^2(\E)}^2 ds\right).
\end{align*}
Together with the estimate \eqref{eq:aux3} and using Young's inequality, one can then see that
\begin{align*}
\int_0^t \|\dx c(s)\|_{L^2(\E)}^6 ds
&\le \underline\beta^{-1} P_5(t) P_3(t)  + C_5 \left(\int_0^t \|\dx u(s)\|_{L^2(\E)}^2ds\right)^{2/3} 
  \left(t^{4/3} \underline\beta^{-1} P_{3}(t)\right) \\
&\le P_6(t) + \frac{\underline\alpha}{4 C_5} \int_0^t \|\dx u(s)\|^2 ds.
\end{align*}
The coefficients of the polynomial $P_6(t)$ again depend continuously on the problem data. 

\smallskip

\noindent {\em Step~5:}
Inserting the last expression in the first estimate of Step~4, slightly rearranging the terms, 
and integrating with respect to time now yields
\begin{align} \label{eq:aux6}
\|u(t)\|_{L^2(\E)}^2 + \underline\alpha \int_0^t \|\dx u(s)\|_{L^2(\E)}^2 ds &\leq P_7(t)
\end{align}
with polynomial $P_7(t)$ whose coefficients depend continuously on the data. 
This is the required estimate for $u$.
A combination with \eqref{eq:aux3} and \eqref{eq:aux4} yields the bounds for $c$. 
\hfill \qed

\subsection{Higher regularity}

We now state some bounds for the solution in stronger norms which are obtained under the assumption that 
the initial values have higher regularity and satisfy the usual compatibility conditions. 
We thus assume in the following that
\begin{itemize}
 \item[(A3)]  $\alpha \dx u_0 + \chi \dx c_0 u_0  \in H_0(\div;\E)$ 
and $\dx(\beta \dx c_0)-\gamma c_0 + \delta u_0 \in H^1(\E)$, 
\end{itemize}
where $H_0(\div;\E) = \big\{w \in H^1_{pw}(\E) : \sum_{e \in \E(v)} n_e(v) w_e(v) = 0 \ \forall v \in \V \big\}$
is the space of regular fluxes and $H_{pw}^k(\E) = \{ w \in L^2(\E) : w|_e \in H^k(e) \ \forall e \in \E\}$ the space 
of piecewise smooth functions with appropriate regularity on the individual edges $e \in \E$. 
Under these assumptions, one can show that the solution $(u,c)$ in fact enjoys higher regularity. 
\begin{theorem}\label{thm:reg}
Let (A1)--(A3) hold. Then the solution $(u,c)$ of Theorem~\ref{thm:global} satisfies 
\begin{align} 
\|u\|_{L^{2}(0,T;H_{pw}^{2}(\E))} + \|\dt u\|_{L^2(0,T;H^1(\E))} + \|\dtt u\|_{L^2(0,T;H^1(\E)')}^2 &\le C(T), \label{eq:reg1}\\ 
\|c\|_{L^{\infty}(0,T;H_{pw}^2(\E))} + \|\dt c\|_{L^\infty(0,T;H^1(\E))} + \|\dtt c\|_{L^2(0,T;L^2(\E))} &\le C(T), \label{eq:reg2}
\end{align}
\end{theorem}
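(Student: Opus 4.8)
The plan is to bootstrap from the regularity already available in Theorem~\ref{thm:global} by differentiating the variational equations in time and using elliptic regularity on each edge. Throughout I would use the notation $\tilde u = \dt u$, $\tilde c = \dt c$ and repeatedly invoke the one-dimensional Gagliardo--Nirenberg interpolation (Lemma~\ref{lem:interpolation}) together with the polynomial bounds $\|u\|_{L^\infty(0,T;L^2(\E))}$, $\|c\|_{L^\infty(0,T;H^1(\E))}$ and $\|\dx u\|_{L^2(0,T;L^2(\E))}$, $\|\dt c\|_{L^2(0,T;L^2(\E))}$ from \eqref{eq:aux3}, \eqref{eq:aux4}, \eqref{eq:aux6}. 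The compatibility conditions (A3) are exactly what is needed to give the time-differentiated problems meaningful initial data: $\dt c(0) = \dx(\beta\dx c_0) - \gamma c_0 + \delta u_0 \in H^1(\E)$ and $\dt u(0) = \dx(\alpha\dx u_0 - \chi u_0 \dx c_0)\in L^2(\E)$, the latter being well-defined precisely because the flux lies in $H_0(\div;\E)$.

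\textbf{Step 1 (time derivative of $c$).} Formally differentiate \eqref{eq:var2} in time to get
\begin{align*}
\la \dt \tilde c, q\ra_\E + \la \beta \dx \tilde c, \dx q\ra_\E + \la \gamma \tilde c, q\ra_\E = \la \delta \dt u, q\ra_\E,
\end{align*}
and test with $q = \tilde c$ and then with $q = \dt \tilde c$. The first choice, together with a Gronwall argument, controls $\|\tilde c\|_{L^\infty(0,T;L^2(\E))} + \|\dx\tilde c\|_{L^2(0,T;L^2(\E))}$ in terms of $\|\dt u\|_{L^2(0,T;H^1(\E)')}$, which in turn is controlled from \eqref{eq:var1} by $\|\dx u\|_{L^2}$ and $\|u\dx c\|_{L^2}$; the latter product is handled by writing $\|u\dx c\|_{L^2(\E)} \le \|u\|_{L^\infty(\E)}\|\dx c\|_{L^2(\E)}$ and interpolating $\|u\|_{L^\infty}$ against $\|u\|_{H^1}$. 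Then an elliptic estimate on each edge applied to $-\dx(\beta\dx c) = \delta u - \gamma c - \dt c$ gives $\|c\|_{L^\infty(0,T;H^2_{pw}(\E))} \le C(T)$, and the second test function yields $\|\dt\tilde c\|_{L^2(0,T;L^2(\E))} + \|\dx\tilde c\|_{L^\infty(0,T;L^2(\E))} \le C(T)$, i.e.\ \eqref{eq:reg2}.

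\textbf{Step 2 (time derivative of $u$).} Differentiate \eqref{eq:var1} in time:
\begin{align*}
\la \dt \tilde u, v\ra_\E + \la \alpha \dx \tilde u, \dx v\ra_\E = \la \chi \tilde u\, \dx c, \dx v\ra_\E + \la \chi u\, \dx\tilde c, \dx v\ra_\E.
\end{align*}
Test with $v = \tilde u$. The term $\la\chi \tilde u\,\dx c,\dx\tilde u\ra_\E$ is absorbed exactly as in the positivity proof of Lemma~\ref{lem:cons_and_pos}, using $\|\dx c\|_{L^\infty(0,T;L^2(\E))}$ from Step~1 (or already from \eqref{eq:aux4}) and the interpolation trick to split $\eps\|\dx\tilde u\|^2 + C_\eps\|\tilde u\|^2$. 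The genuinely new term $\la\chi u\,\dx\tilde c,\dx\tilde u\ra_\E$ is estimated by $\overline\chi\|u\|_{L^\infty(\E)}\|\dx\tilde c\|_{L^2(\E)}\|\dx\tilde u\|_{L^2(\E)}$, again Young-split to absorb $\dx\tilde u$ and leaving $\|u\|^2_{L^\infty(\E)}\|\dx\tilde c\|^2_{L^2(\E)}$, whose time integral is finite by Step~1 and the (now polynomial) bound on $\|u\|_{L^\infty}$. A Gronwall argument gives $\|\tilde u\|_{L^\infty(0,T;L^2(\E))} + \|\dx\tilde u\|_{L^2(0,T;L^2(\E))} \le C(T)$, hence $\|\dt u\|_{L^2(0,T;H^1(\E))}\le C(T)$. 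Elliptic regularity on each edge applied to $-\dx(\alpha\dx u) = -\dt u + \dx(\chi u\dx c)$, where $\dx(\chi u\dx c) = \chi\dx u\,\dx c + \chi u\,\dx(\beta^{-1}(\dots))$ is in $L^2$ by the $H^2_{pw}$-bound on $c$ from Step~1, yields $\|u\|_{L^2(0,T;H^2_{pw}(\E))}\le C(T)$. Finally, reading off $\dt\tilde u$ from the time-differentiated equation as an element of $H^1(\E)'$ gives the last term of \eqref{eq:reg1}.

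\textbf{Main obstacle.} The delicate point is the rigorous justification of differentiating the variational equations in time and of the elliptic regularity step on the metric graph: one must argue via difference quotients in time (rather than assuming $\dt^2$ a priori) and show that the limit equation holds, and one must verify that the coupling conditions \eqref{eq:sys3}--\eqref{eq:sys5} propagate to make $\dx(\alpha\dx u - \chi u\dx c) \in H_0(\div;\E)$ so that the edgewise $H^2$-regularity glues into a global statement. The quadratic nonlinearities $\tilde u\,\dx c$ and $u\,\dx\tilde c$ are the reason the one-dimensional Gagliardo--Nirenberg embedding $H^1(\E)\hookrightarrow L^\infty(\E)$ is essential at every step; in higher dimensions this argument would break, which is consistent with the finite-time blow-up phenomenon mentioned in the introduction. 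All constants remain polynomial in $T$ because every bound used as input is, and the bootstrap introduces only finitely many products and Gronwall factors; I would not track the explicit polynomials but simply note $C(T)$ depends continuously on the data as before.
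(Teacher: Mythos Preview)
Your overall strategy---differentiate in time, run energy estimates on $(\tilde u,\tilde c)=(\partial_t u,\partial_t c)$, then recover $H^2_{pw}$ by edgewise elliptic regularity---is correct in spirit and is more elementary than what the paper does. The paper instead recasts the time-differentiated system \eqref{eq:diff1}--\eqref{eq:diff2} as a linear fixed-point problem: with $z\in L^\infty(0,T;L^2(\E))$ given, solve \eqref{eq:lindt2} for $d$, then \eqref{eq:lindt1} for $w$, and show that $z\mapsto w$ is a contraction on $L^\infty(0,T;L^2(\E))$ for small $T$ (Lemmas~\ref{lem:reg1}--\ref{lem:reg2}); linearity of the differentiated system then allows extension to arbitrary $T$. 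Condition (A3) enters exactly as you say, to make $w_0\in L^2(\E)$ and $d_0\in H^1(\E)$.

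As written, however, your ordering is circular. In Step~1 the test $q=\partial_t\tilde c$ requires $\langle\delta\,\partial_t u,\partial_t\tilde c\rangle_\E$ to be estimated via $\|\partial_t u\|_{L^2(\E)}\|\partial_t\tilde c\|_{L^2(\E)}$, hence needs $\partial_t u\in L^2(0,T;L^2(\E))$, which is the output of Step~2. Conversely, in Step~2 you control $\int_0^T\|u\|_{L^\infty(\E)}^2\|\partial_x\tilde c\|_{L^2(\E)}^2\,dt$ by invoking ``the (now polynomial) bound on $\|u\|_{L^\infty}$''; but Theorem~\ref{thm:global} only gives $u\in L^\infty(0,T;L^2(\E))\cap L^2(0,T;H^1(\E))$, so $\|u\|_{L^\infty(\E)}^2$ is merely in $L^2(0,T)$ via \eqref{interpolationL2}, and this is not enough against $\|\partial_x\tilde c\|_{L^2(\E)}^2\in L^1(0,T)$ from your first test alone---you need $\partial_x\tilde c\in L^\infty(0,T;L^2(\E))$, i.e.\ the second test of Step~1. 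The fix within your framework is to couple the two estimates: add the inequality from testing the $\tilde u$-equation with $\tilde u$ to the inequality from testing the $\tilde c$-equation with $\partial_t\tilde c$ \emph{before} applying Gronwall, so that the right-hand side reads $C(1+\|\partial_x u(t)\|_{L^2(\E)})\big(\|\tilde u(t)\|_{L^2(\E)}^2+\|\partial_x\tilde c(t)\|_{L^2(\E)}^2\big)$, and the prefactor is integrable in $t$. The paper's fixed-point argument is precisely a device to avoid this coupling: by freezing $z$ in place of $\partial_t u$ one obtains $\partial_x d\in L^\infty(0,T;L^2(\E))$ from Lemma~\ref{lem:local1} directly, and the $w$-estimate then closes without circularity.
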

A complete proof is presented in the appendix. 
Let us note that this is exactly the additional regularity that can be expected, see \cite[Sec~7.1.3]{Evans10}, and 
which allows us to establish order optimal convergence rates for the numerical approximation in Section~\ref{sec:rates}.

\section{Discretization} \label{sec:discretization}

We now turn to the systematic discretization of the chemotaxis problem on networks 
by a finite element method in space and an implicit time stepping scheme. 

\subsection{Notation}

Let $[0,\ell_e]$ be the interval represented by the edge $e$ 
and let $T_h(e) = \{T\}$ be a uniform mesh of $e$ with subintervals $T$ of length $h_T=h_e$. 
The global mesh is then defined by $T_h(\E) = \{T_h(e) : e \in \E\}$ and $h=\max_e h_e$ is the global mesh size. 
Furthermore, let $x_j$, $j=1,\ldots,N$ be the vertices of the mesh $T_h(\E)$.
We denote by 
\begin{align*}
 P_k(T_h(\E)) &= \{v \in L^2(\E) : v|_e \in P_k(T_h(e)), \ e \in \E\}, 
\end{align*}
and $P_k(T_h(e)) = \{v \in L^2(e) : v|_T \in P_k(T), \ T \in T_h(e)\}$ the spaces of piecewise polynomials on $T_h(\E)$ and $T_h(e)$, respectively, and by $P_k(T)$ the space of polynomials of degree $\le k$ on the element $T$. 
Note that $P_k(T_h(e)) \subset L^2(e)$, but in general $P_k(T_h(e)) \not\subset H^1(e)$. 

For the approximation of the population density $u$ and the concentration $c$ in space,
we consider the finite element space
\begin{align} \label{eq:space}
V_h = P_{1}(T_h(\E)) \cap H^1(\E)
\end{align}
of continuous and piecewise linear functions over the mesh $T_h(\E)$. 
We further denote by $\pi_h : L^2(\E) \to V_h$ the standard $L^2$-orthogonal projection onto $V_h$, defined by 
\begin{align*} 
\la \pi_h v,v_h\ra_E = \la v,v_h\ra_\E \qquad \forall v_h \in V_h.
\end{align*}
Recall that $\pi_h$ is uniformly bounded on $L^2(\E)$ and on $H^1(\E)$. 
Moreover, 
\begin{align} \label{eq:l2-proj}
\|\pi_h v - v\|_{H^s(\E)} \le C h^{k-s} \|v\|_{H_{pw}^k(\E)} 
\end{align}
for all $v \in H_{pw}^k(\E) \cap H^s(\E)$ with $0 \le s \le 1$ and $0 \le k \le 2$.  
These estimates are readily proven by the standard approximation error estimates for the $L^2$-projection, 
inverse inequalities, and interpolation arguments; see \cite{BrennerScott08} for details.

Let us now turn to the time discretization.
We choose a time step size $\tau>0$ and set $t^n=n\tau$ for $n \ge 0$. 
For any given sequence $\{a_n\}_{n \ge 0}$, we denote by 
\begin{align} \label{eq:diffquot}
 \dtau a^n = \frac{1}{\tau}(a^n - a^{n-1}) 
\end{align}
the backward difference quotient with respect to the given time discretization.
Let us note that for any scalar product $\la\cdot,\cdot\ra$ with associated norm $\|\cdot\|$, there holds
\begin{align*} 
\frac{1}{2} \dtau \|a^n\|^2 
= \frac{1}{2 \tau}\|a^n\|^2 - \frac{1}{2 \tau}\|a^{n-1}\|^2 
= \la\dtau a^n, a^n\ra - \frac{\tau}{2} \|\dtau a^n\|^2,
\end{align*}
which can be verified by some basic calculations. As a direct consequence, one has 
\begin{align} \label{eq:identity}
\frac{1}{2 \tau}\|a^n\|^2 
\le \frac{1}{2 \tau}\|a^{n-1}\|^2 + \la \dtau a^n, a^n\ra, 
\end{align}
which will be frequently used to derive discrete energy estimates in our analysis below.

\subsection{Auxiliary results}

In order to guarantee the positivity of solutions also on the discrete level, 
some modifications of the standard Galerkin approach will be required. 
In the following, we introduce the main ingredients and present some basic results.

\subsection*{Quasi-interpolation}

For the approximation of the initial values, we will use a
quasi-interpolation operator $\widetilde \pi_h : L^2(\E) \to V_h$, which is defined by 
\begin{align} \label{eq:clement}
(\widetilde \pi_h v)(x_i) = \frac{1}{|\omega_{x_i}|} \int_{\omega_{x_i}}  v(x) dx,
\end{align}
where $\omega_{x_i} = \bigcup T\in T_h(\E) : x_i \in T$ is the element patch around the vertex $x_i$ of the mesh $T_h(\E)$.
Let us recall that the operator $\widetilde \pi_h : L^2(\E) \to V_h \subset L^2(\E)$ is linear and continuous with 
\begin{align} \label{eq:clement1}
\|\widetilde \pi_h v  - v\|_{H^s(\E)} \le C h^{k-s} \|v\|_{H_{pw}^k(\E)}
\end{align}
for all $v \in H_{pw}^k(\E) \cap H^s(\E)$ with $0 \le s \le k \le 1$; see \cite{BrennerScott08,Clement75} for details.  
Moreover, the operator is positivity-preserving, i.e., 
\begin{align} \label{eq:clement2}
\widetilde \pi_h v \ge 0 \qquad \text{if} \qquad v \ge 0,
\end{align}
which follows directly from the construction \eqref{eq:clement} via local averaging.

\subsection*{Mass-lumping}
It is well-known that some sort of mass lumping is required to ensure a discrete maximum 
principle for the finite element approximation of parabolic problems, see e.g. \cite{Thomee06}. 
To this end, we define for $u,v \in H^1_{pw}(\E)$ the \emph{lumped} scalar product 
\begin{align*}
\la u,v\ra_{h,\E} = \sum_{T \in T_h} \int_T I_T(u v) dx,  
\end{align*}
where $I_T(w) \in P^1(T)$ denotes the linear interpolation of the function $w \in H^1(T)$. 
This corresponds to using numerical integration on every element $T \in T_h$ by the trapezoidal rule. 
One can verify by simple calculations that the induced norm 
\begin{align*}
	\|v_h\|_h = \la v_h,v_h \ra_{h,E}^{1/2}, \quad v_h \in V_h,
\end{align*}
is equivalent to the standard $L^2$-norm on the finite element space $V_h$, i.e., 
\begin{align}\label{eq:equiv}
\frac{1}{\sqrt 3} \|v_h\|_{h} \le \|v_h\|_{L^2(\E)} \le \|v_h\|_h, \quad \forall v_h\in V_h.
\end{align}

\subsection*{Upwinding}

As a final ingredient for our discretization scheme, we now describe the upwind technique for the convective term.
Let $\chi \dx c_h^{n-1} \in P_0(T_h) \subseteq L^2(\E) $ be given. 
Then for any $u \in H^1(\E)$, we define $\widehat \pi_h^{n-1} u \in P_0(T_h)$ 
on every element $T = [x_1,x_2]$ by 
\begin{align}\label{eq:upw}
\widehat \pi_h^{n-1} u |_{T} = 
\begin{cases} 
u(x_1), & \text{if } \chi \dx c_h^{n-1} \ge 0, \\
u(x_2), & \text{else}.
\end{cases}
\end{align}
Note that for any $n \ge 1$, the operator $\widehat \pi_h^{n-1} : H^1(\E) \to P_0(T_h)$ is linear and bounded, i.e., 
\begin{equation}\label{eq:upwBounded}
\| \widehat \pi_h^{n-1} u \|_{L^{\infty}(\E)} \leq \|u\|_{L^{\infty}(\E)} \leq C \|u_h\|_{H^{1/2+\eps}(\E)}.
\end{equation}
Moreover, the piecewise constant function $\widehat \pi_h^{n-1} u$ interpolates $u$ at the vertices $x_j$ 
and, therefore, standard Taylor estimates yield 
\begin{align} \label{eq:upwind2}
\| \widehat \pi_h^{n-1} u - u\|_{L^\infty(\E)} \le C h^{1-1/p} \|\dx u\|_{L^p(\E)}, 
\end{align}
for all $u \in H^1(\E)$ with a uniform constant $C$ independent of the mesh.

\subsection{Definition of the discretization scheme}

We are now in the position to formulate our numerical approximation scheme 
for the weak formulation of problem \eqref{eq:sys1}--\eqref{eq:sys6}.
\begin{problem} \label{prob:disc}
Set $u_h^0=\widetilde \pi_h u_0$, $c_h^0= \widetilde \pi_h c_0$. 
Then for $n \ge 1$, find $(u_h^n,c_h^n) \in V_h \times V_h$ 
with
\begin{alignat}{2}
&\la\dtau u_h^n,v_h\ra_{h,\E} + \la\alpha \dx u_h^n, \dx v_h\ra_\E = \la \chi \widehat \pi_h^{n-1} u_h^n \dx c_h^{n-1}, \dx v_h\ra_\E, \qquad && \forall v_h \in V_h, \label{eq:disc1}\\
&\la \dtau c_h^n,q_h\ra_{h,\E} + \la \beta \dx c_h^n,\dx q_h\ra_\E + \la \gamma c_h^n,q_h\ra_{h,\E} = \la \delta u_h^n,q_h\ra_{h,\E}, && \forall q_h \in V_h. \label{eq:disc2}
\end{alignat}
\end{problem}
We will show below that the discrete solution is well-defined, that the scheme is positivity preserving,
and that the total population is conserved for all time.
For our analysis, we will need some auxiliary results which we state next.

\subsection{Algebraic properties}

Let us start with summarizing some algebraic properties of the discretization scheme 
\eqref{eq:disc1}--\eqref{eq:disc2}.
We denote by $\{\phi_i : i=1,\ldots,N\}$ the nodal basis of the finite element space $V_h$ defined by 
\begin{align} \label{eq:nodal}
\phi_i \in V_h : \phi_i(x_j) = \delta_{i,j}.
\end{align}
Now let $\underline v=[v_1,\ldots,v_N]$ be the coordinate vector of the function $v_h = \sum_{i=1}^N v_i \phi_i \in V_h$ 
with respect to this basis given by $v_i = v_h(x_i)$.
From the particular form of the basis functions $\phi_i$, one can directly deduce that 
\begin{align} \label{eq:pos}
v_h \ge 0 \quad \Leftrightarrow \quad \underline v \ge 0. 
\end{align}
Moreover, the discretized system \eqref{eq:disc1}--\eqref{eq:disc2} can be written in algebraic form as 
\begin{align*}
M(1) \dtau \underline u^n + K(\alpha) \underline u^n &= C^{n-1} \underline u^n, \\
M(1) \dtau \underline c^n + K(\beta)  \underline c^n + M(\gamma) \underline c^n &= M(\delta) \underline u^n,
\end{align*}
with system matrices defined by 
\begin{align*}
M(\eta)_{ij} = \la \eta \phi_j, \phi_i\ra_{h,\E},
\qquad 
K(\eta)_{ij} = \la \eta \dx \phi_j, \dx \phi_i\ra_{\E} 
\end{align*}
and 
\begin{align*}
C^{n-1}_{ij} = \la \chi \widehat \pi_h^{n-1} \phi_j \dx c_h^{n-1}, \dx \phi_i\ra_{\E}. 
\end{align*}
From the definition of the matrices, one can immediately deduce the following properties. 
\begin{lemma} \label{lem:algebraic}
(a) For any $\eta \in L^\infty(\E)$, the matrix $M=M(\eta)$ is diagonal. 
If $\eta>0$, then $M_{ii}>0$ for all $i=1,\ldots,N$.
(b) For $\eta \in L^\infty(\E)$ with $\eta > 0$, the matrix $K=K(\eta)$ satisfies 
$K_{ii} > 0$, $K_{ij} \le 0$ for $j \ne i$, and $\sum_{j=1}^N K_{ij} = \sum_{i=1}^N K_{ij} = 0$. 
(c) Let $\chi \dx c_h^{n-1} \in P_0(T_h)$. Then the matrix $C=C^{n-1}$ satisfies 
$C_{ii} \le 0$, $C_{ij} \ge 0$ for all $j \neq i$, and $\sum_{j=1}^N C_{ij} = 0$. 
\end{lemma}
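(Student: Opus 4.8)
The plan is to read off each of the three statements directly from the definitions of the matrices together with the elementary properties of the $P_1$ nodal basis $\{\phi_i\}$ on a one-dimensional mesh, the lumped scalar product $\la\cdot,\cdot\ra_{h,\E}$, and the upwind operator $\widehat\pi_h^{n-1}$. For part~(a), I would observe that on each element $T=[x_j,x_{j+1}]$ the trapezoidal rule used to define $\la\cdot,\cdot\ra_{h,\E}$ only sees nodal values, so $\la\eta\phi_j,\phi_i\ra_{h,\E}$ picks up a contribution $\tfrac{h_T}{2}\eta_T\,\phi_j(x_k)\phi_i(x_k)$ summed over the two endpoints $x_k$ of $T$; since $\phi_j(x_k)\phi_i(x_k)=\delta_{ij}\delta_{jk}$ this vanishes unless $i=j$, giving diagonality, and for $\eta>0$ each diagonal entry is a positive sum of the local patch lengths weighted by $\eta$, hence $M_{ii}>0$.

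For part~(b), the matrix $K(\eta)$ is the standard weighted stiffness matrix. On each element $T$ of length $h_T$ the local derivatives of the two hat functions are $\pm 1/h_T$, so the local stiffness contribution is $\tfrac{\eta_T}{h_T}\begin{pmatrix}1&-1\\-1&1\end{pmatrix}$; assembling gives $K_{ii}>0$ (sum of positive element contributions), $K_{ij}=-\eta_T/h_T\le 0$ for the single shared element of two neighbouring nodes $i\ne j$, and $K_{ij}=0$ otherwise. The zero row and column sums follow because $\sum_j\phi_j\equiv 1$ on $\E$, so $\dx(\sum_j\phi_j)=0$ and hence $\sum_j K(\eta)_{ij}=\la\eta\dx(\sum_j\phi_j),\dx\phi_i\ra_\E=0$; symmetry of $K$ (the bilinear form is symmetric) gives the column sums as well.

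For part~(c), I would fix $n\ge 1$, write $\underline{b}=\chi\dx c_h^{n-1}\in P_0(T_h)$ and examine $C_{ij}=\la b\,\widehat\pi_h^{n-1}\phi_j,\dx\phi_i\ra_\E=\sum_T b_T\,(\widehat\pi_h^{n-1}\phi_j)|_T\,(\dx\phi_i)|_T\,h_T$ element by element. The key point is the definition \eqref{eq:upw}: on an element $T$ the value $(\widehat\pi_h^{n-1}\phi_j)|_T$ equals $\phi_j$ evaluated at the \emph{inflow} endpoint of $T$, i.e. the left endpoint when $b_T\ge 0$ and the right endpoint when $b_T<0$. Taking $i=j$, on the two elements adjacent to $x_i$ one checks that the product $b_T\cdot(\widehat\pi_h^{n-1}\phi_i)|_T\cdot(\dx\phi_i)|_T$ is $\le 0$ in every sign case (when the wind blows outward from $x_i$ on an element, $\phi_i$ is evaluated at $x_i$ and contributes; when it blows inward, $\phi_i$ is evaluated at the far node where it vanishes — so only the outflow contributions survive, each of the form $-|b_T|/h_T\cdot h_T$), giving $C_{ii}\le 0$; for $j\ne i$ the surviving contributions on the shared element are $\ge 0$ by the analogous sign check. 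Finally $\sum_j C_{ij}=\la b\,\widehat\pi_h^{n-1}(\sum_j\phi_j),\dx\phi_i\ra_\E$; since $\sum_j\phi_j\equiv 1$, its upwind interpolant is the constant $1$ on each element, so $\dx$ of nothing appears and the pairing is $0$.

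The main obstacle is purely bookkeeping in part~(c): one must carefully separate the cases $b_T\ge 0$ and $b_T<0$ on each of the (at most two) elements touching a given node and track which nodal value of $\phi_j$ is selected by the upwind rule, making sure the signs of $b_T$, of $(\dx\phi_i)|_T$, and of the selected nodal value combine correctly; there is no analytic difficulty, only the need to be systematic. One should also note the degenerate edge cases — boundary nodes $x_i\in\V_b$ (only one adjacent element) and interior graph vertices $v\in\V_0$ (more than two adjacent elements) — where the same elementwise argument applies verbatim since each element contributes independently.
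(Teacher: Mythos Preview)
Your arguments for parts (a), (b), and the sign claims in (c) are correct and essentially identical to the paper's proof: both proceed element by element, and your use of the partition-of-unity identity $\sum_j \phi_j \equiv 1$ to obtain the zero row and column sums of $K(\eta)$ is a clean alternative to the paper's explicit formula for $K_{ij}^{(T)}$.

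The gap is in your argument for the zero-sum claim in (c). You correctly reduce $\sum_j C_{ij}$ to $\la b,\dx\phi_i\ra_\E$ with $b=\chi\dx c_h^{n-1}$, but your conclusion ``$\dx$ of nothing appears and the pairing is $0$'' is wrong: the derivative acts on $\phi_i$, not on the first argument, and in general
\[
\la b,\dx\phi_i\ra_\E \;=\; \sum_{T\ni x_i} b|_T\,(\dx\phi_i)|_T\,|T| \;=\; b_{T_-}-b_{T_+}\;\neq\;0
\]
for an interior mesh node with adjacent elements $T_-,T_+$, since $b$ is merely piecewise constant. So the \emph{row} sums $\sum_j C_{ij}$ do not vanish.

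What the paper's element matrices actually exhibit (and what is used downstream in Lemma~\ref{lem:disc_wellposed} for the \emph{column}-wise diagonal dominance of $S_u$ and for mass conservation via testing with $v_h=1$) is the vanishing of the \emph{column} sums $\sum_i C_{ij}=0$. Your partition-of-unity idea proves exactly this when applied in the other slot:
\[
\sum_{i} C_{ij} \;=\; \big\langle b\,\widehat\pi_h^{n-1}\phi_j,\ \dx\!\big(\textstyle\sum_i\phi_i\big)\big\rangle_\E \;=\; \la b\,\widehat\pi_h^{n-1}\phi_j,\ 0\ra_\E \;=\;0.
\]
So the fix is simply to swap the roles of $i$ and $j$ in your final step; the statement $\sum_j C_{ij}=0$ as written cannot be established because it is not true in general.
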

\begin{proof}
Note that the entries of all system matrices are defined by integrals over the network $\E$ 
which can be split into integrals over individual elements $T \in T_h(\E)$ respectively.

\smallskip 

\noindent (a) 
By definition of the matrix $M$, one has $M_{ij} = \sum_{T \in T_h} M_{ij}^{(T)}$
with element contributions given by $M_{ij}^{(T)} = \int_T I_T (\eta \phi_j \phi_i) dx = |T| \sum_{s\in \{k,l\}} \eta(x_s) \phi_j(x_s) \phi_i(x_s)$ for $T=[x_k,x_l]$. The properties in (a) then follow directly from \eqref{eq:nodal}. 

\smallskip 

\noindent (b) 
In a similar manner, we can decompose $K=K(\eta)$ as $K=\sum_{T \in T_h} K^{(T)}$ 
with the non-zero entries of $K^{(T)}$ for the element $T=[x_k,x_l]$ given by 
\begin{align*}
K_{ij}^{(T)} = 
\begin{cases} 
\frac{\int_T \eta dx}{|T|^2}, & i=j \in \{k,l\}, \\
-\frac{\int_T \eta dx}{|T|^2}, & i \ne j \in \{k,l\},
\end{cases}
\end{align*}
and $K_{i,j}^{(T)}=0$ else. The properties in (b) then follow by summation over all elements.

\smallskip 

\noindent (c) 
The matrix $C^{n-1} = \sum_{T \in T_h} C^{(T)}$ can again be split into element contributions.
Now let $a_T=\chi \dx c_h^{n-1}|_T$ for $T=[x_k,x_l]$. Then the non-zero entries of $C_{ij}^{(T)}$ 
are given by 
\begin{align*}
[C_{ij}^{(T)}]_{ij} = 
\min(a_T,0) \begin{pmatrix}
\frac{1}{2} & 0 \\
-\frac{1}{2} & 0
\end{pmatrix} 
+ \max(a_T,0)\begin{pmatrix}
0 & \frac{1}{2} \\
0 & -\frac{1}{2}
\end{pmatrix} 
\end{align*}
for $i,j \in \{k,l\}$ and $C_{ij}^{(T)}=0$ else. The properties in assertion (c) then follow 
directly from these observations by summation over all elements.
\end{proof}

\subsection{Well-posedness, conservation, and positivity}

As a direct consequence of the algebraic properties stated in the previous lemma, we obtain 
the following result.
\begin{lemma} \label{lem:disc_wellposed}
Let (A1)--(A2) hold. 
Then Problem~\ref{prob:disc} has a unique solution $(u_h^n,c_h^n)_{n \ge 0}$. 
Moreover, $\int_\E u_h^n dx = \int_\E u_h^0 dx$ and $u_h^n \ge 0$, $c_h^n \ge 0$ for all $n \ge 0$.
\end{lemma}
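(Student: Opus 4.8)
The plan is to exploit the algebraic structure established in Lemma~\ref{lem:algebraic} and to proceed inductively in $n$. At each time step, one has to solve a linear system for the pair $(\underline u^n, \underline c^n)$, and the key observation is that the two equations are triangularly coupled at the discrete level: given $\underline u^{n-1}$ and $\underline c^{n-1}$, the matrix $C^{n-1}$ is already determined (it depends only on $\dx c_h^{n-1}$), so the first equation
\begin{align*}
\left(\tfrac{1}{\tau} M(1) + K(\alpha) - C^{n-1}\right) \underline u^n = \tfrac{1}{\tau} M(1)\, \underline u^{n-1}
\end{align*}
determines $\underline u^n$ on its own, and then
\begin{align*}
\left(\tfrac{1}{\tau} M(1) + K(\beta) + M(\gamma)\right) \underline c^n = M(\delta)\, \underline u^n + \tfrac{1}{\tau} M(1)\, \underline c^{n-1}
\end{align*}
determines $\underline c^n$. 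So the first step is to verify that each of the two system matrices is invertible. For this I would argue that both are (nonsymmetric, resp.\ symmetric) \emph{M-matrices}, or more directly that they are strictly diagonally dominant by columns with positive diagonal. Indeed, by Lemma~\ref{lem:algebraic} the diagonal of $\tfrac{1}{\tau}M(1) + K(\alpha) - C^{n-1}$ is positive (since $M(1)_{ii}>0$, $K(\alpha)_{ii}>0$, $C^{n-1}_{ii}\le 0$), the off-diagonal entries are nonpositive ($K(\alpha)_{ij}\le 0$, $-C^{n-1}_{ij}\le 0$), and the column sums are $\tfrac{1}{\tau}M(1)_{ii} > 0$ because $K(\alpha)$ and $C^{n-1}$ have vanishing column sums. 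Hence the matrix is a nonsingular M-matrix with nonnegative inverse; the same reasoning applies to the symmetric matrix $\tfrac{1}{\tau}M(1) + K(\beta) + M(\gamma)$, whose column sums are $\tfrac{1}{\tau}M(1)_{ii} + M(\gamma)_{ii} > 0$. This gives existence and uniqueness of $(u_h^n,c_h^n)$ for every $n\ge 1$, and together with the definition $u_h^0 = \widetilde\pi_h u_0$, $c_h^0 = \widetilde\pi_h c_0$ the full sequence is well-defined.

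Next, conservation of mass: testing \eqref{eq:disc1} with $v_h \equiv 1 \in V_h$ kills the stiffness term ($\dx v_h = 0$) and the convective term ($\dx v_h = 0$), leaving $\la \dtau u_h^n, 1\ra_{h,\E} = 0$, i.e.\ $\int_\E u_h^n\,dx = \int_\E u_h^{n-1}\,dx$ (using that the lumped inner product $\la v_h, 1\ra_{h,\E} = \int_\E v_h\,dx$ for $v_h \in V_h$, since the trapezoidal rule is exact on $P_1$). Iterating gives $\int_\E u_h^n\,dx = \int_\E u_h^0\,dx$ for all $n$. Alternatively one can sum the first algebraic equation against the all-ones vector, using that $K(\alpha)$ and $C^{n-1}$ have zero column sums, which is the discrete analogue of the divergence-structure argument used in Lemma~\ref{lem:cons_and_pos}.

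For positivity, I would again argue inductively. The base case holds because $u_0, c_0 \ge 0$ and $\widetilde\pi_h$ is positivity-preserving by \eqref{eq:clement2}, so $u_h^0, c_h^0 \ge 0$, hence $\underline u^0, \underline c^0 \ge 0$ by \eqref{eq:pos}. For the inductive step, suppose $\underline u^{n-1}, \underline c^{n-1} \ge 0$. The right-hand side of the $\underline u^n$-system is $\tfrac{1}{\tau} M(1)\,\underline u^{n-1} \ge 0$ (diagonal positive matrix times a nonnegative vector), and the system matrix is an M-matrix with nonnegative inverse, so $\underline u^n \ge 0$, hence $u_h^n \ge 0$. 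Then the right-hand side of the $\underline c^n$-system is $M(\delta)\,\underline u^n + \tfrac{1}{\tau} M(1)\,\underline c^{n-1} \ge 0$ (here $\delta \ge 0$, so $M(\delta)$ is diagonal with nonnegative entries), and the same M-matrix argument yields $\underline c^n \ge 0$, hence $c_h^n \ge 0$. This closes the induction.

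The main obstacle, and the place where care is genuinely needed, is the invertibility/M-matrix step — specifically confirming that the \emph{nonsymmetric} matrix $\tfrac{1}{\tau}M(1) + K(\alpha) - C^{n-1}$ is nonsingular with nonnegative inverse. Diagonal dominance by columns together with the sign pattern from Lemma~\ref{lem:algebraic}(a)--(c) is exactly what is needed (a strictly diagonally dominant matrix with positive diagonal and nonpositive off-diagonals is a nonsingular M-matrix), but one must be slightly careful that the strict dominance on \emph{every} column comes precisely from the term $\tfrac{1}{\tau}M(1)_{ii} > 0$ surviving after the zero-column-sum cancellations — this is where the mass-lumping (making $M(1)$ diagonal and positive) and the upwind construction (giving $C^{n-1}$ zero column sums and the correct signs) both enter decisively. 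Everything else is routine induction.
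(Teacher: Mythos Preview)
Your proposal is correct and follows essentially the same approach as the paper: you rewrite the time step as two triangular linear systems, use the sign structure and zero column sums from Lemma~\ref{lem:algebraic} to verify strict column diagonal dominance (hence the M-matrix property and inverse positivity), and then argue by induction for both well-posedness and positivity, with conservation obtained by testing \eqref{eq:disc1} against $v_h\equiv 1$. The paper's proof is virtually identical, and your explicit emphasis on \emph{column} diagonal dominance (rather than row) matches the paper's remark that this is what removes any smallness condition on $\tau$.
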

\begin{proof}
Let $\underline u^{n}$, $\underline c^n$ denote the coordinate vectors for the functions $u_h^n$ and $c_h^n$. 
Using (A2), \eqref{eq:clement2}, \eqref{eq:pos}, and the definition of the initial values, 
one can see that $\underline u^0 \ge 0$ and $\underline c^0 \ge 0$.
Moreover, the algebraic system defining the solution at iteration $n$ can be rewritten as 
\begin{align}
\left[\tfrac{1}{\tau} M(1) + K(\alpha) - C^{n-1}\right] \underline u^n &= \tfrac{1}{\tau} M(1) \underline u^{n-1}, \label{eq:linsys1}\\
\left[\tfrac{1}{\tau} M(1) + K(\beta) + M(\gamma)\right] \underline c^n &= \tfrac{1}{\tau} M(1) \underline c^{n-1} + M(\delta) \underline u^{n}. \label{eq:linsys2}
\end{align}
From the algebraic properties stated in Lemma~\ref{lem:algebraic} and the assumptions (A1) on the model parameters,
one can deduce that the system matrices $S_u = \frac{1}{\tau} M(1) + K(\alpha) - C^{n-1}$ and $S_c = \frac{1}{\tau} M(1) + K(\beta) + M(\gamma)$ are strictly diagonally dominant. 
This shows that the two linear systems \eqref{eq:linsys1} and \eqref{eq:linsys2} can be solved uniquely.
Existence of a unique discrete solution $(u_h^n,c_h^n)$ for all $n \ge 0$ then follows by induction. 
From Lemma~\ref{lem:algebraic}, one can further deduce that $M(1)$ and $M(\delta)$ have positive entries 
and that the system matrices $S_u$ and $S_c$ are M-matrices; see \cite{Plemmons77} for details.
By the inverse positivity of M-matrices one can thus infer that $\underline u^n \ge 0$ and $\underline c^n \ge 0$,
if $\underline u^{n-1} \ge 0$ and $\underline c^{n-1} \ge 0$.
Positivity of the discrete solution $(u_h^n,c_h^n)$ for all $n \ge 0$ then follows by induction and 
noting that $\underline u^n \ge 0$, $\underline c^n \ge 0$; see above.
The conservation property finally follows by testing \eqref{eq:disc1} with $v_h=1$. 
\end{proof}

\begin{remark} \rm 
A related algebraic argument was used by Saito in \cite{Saito12} to establish positivity of the discrete solution. 
In that work, however, a strong smallness condition on the time step size $\tau$ was required to ensure that the system 
matrices are row-wise diagonally dominant. Here we use the fact that the matrices $S_u$ and $S_c$ are column-wise diagonally dominant 
for any $\tau>0$ which, together with the sign conditions on the entries, is sufficient to prove the M-matrix property. 
This allows us to avoid the strong smallness condition on $\tau$. The same argument should allow to improve the analysis for the method of \cite{Saito12}.
\end{remark}

\subsection{Uniform a-priori bounds}

With similar reasoning as on the continuous level, we are now able to derive uniform bounds 
also for the discrete solutions.
\begin{theorem} \label{thm:disc}
Let (A1)--(A2) hold and $(u_h^n,c_h^n)_{n \ge 0}$ be the solution of Problem~\ref{prob:disc}. 
Then 
\begin{align*}
\max_{k \le n}\|u_h^k\|_{L^2(\E)}^2 + \sum_{k=1}^n \tau \|\dx u_h^k\|^2_{L^2(\E)} 
\le P(t^n), \\
\max_{k \le n}\|\dx c_h^k\|_{L^2(\E)}^2 + \sum_{k=1}^n \tau \|\dtau c_h^k\|^2_{L^2(\E)} \le P(t^n),
\end{align*}
with a polynomial $P(t)$ whose coefficients only depend on the problem data.
\end{theorem}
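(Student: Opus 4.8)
The plan is to transcribe the proof of Theorem~\ref{thm:global} step by step onto the discrete level, replacing each continuous energy identity by its discrete counterpart. The substitutes I would use are: the conservation property $\int_\E u_h^n = \int_\E u_h^0$ and the positivity $u_h^n,c_h^n \ge 0$ from Lemma~\ref{lem:disc_wellposed}; the summation-by-parts inequality \eqref{eq:identity}, applied both to the lumped norm $\|\cdot\|_h$ and to the (time-independent) weighted seminorms $\|\beta^{1/2}\dx\cdot\|_{L^2(\E)}$ and $\la\gamma\,\cdot\,,\cdot\ra_{h,\E}^{1/2}$, to telescope the difference quotients after summation; the norm equivalence \eqref{eq:equiv}, to pass freely between $\|\cdot\|_h$ and $\|\cdot\|_{L^2(\E)}$ on $V_h$; the Gagliardo--Nirenberg estimate of Lemma~\ref{lem:interpolation}, which applies verbatim to $u_h^n,c_h^n\in V_h\subset H^1(\E)$ so that no inverse estimates are needed; and the upwind bound \eqref{eq:upwBounded}. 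Two elementary facts about the lumped product are also used repeatedly: for $\eta$ piecewise constant and $w_h\in V_h$ the integrand $\eta w_h$ is piecewise linear, hence $\la\eta w_h,1\ra_{h,\E}=\int_\E\eta w_h\,dx$; and for $w_h,z_h\in V_h$ with $w_h\ge 0$ one has $\la\eta w_h z_h\ra_{h,\E}\le\overline\eta\,\|z_h\|_{L^\infty(\E)}\la w_h,1\ra_{h,\E}$ as well as $\la\eta w_h z_h\ra_{h,\E}\le\overline\eta\,\|w_h\|_h\|z_h\|_h$, both immediate from the nodal (diagonal) structure of $\la\cdot,\cdot\ra_{h,\E}$.

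Steps~1--2: testing \eqref{eq:disc1} with $v_h\equiv 1$ recovers $\|u_h^n\|_{L^1(\E)}=\|u_h^0\|_{L^1(\E)}=:M$; testing \eqref{eq:disc2} with $q_h\equiv 1$, using $\la\gamma c_h^n,1\ra_{h,\E}=\int_\E\gamma c_h^n\ge 0$ and summing over time levels gives $\|c_h^n\|_{L^1(\E)}\le\|c_h^0\|_{L^1(\E)}+t^n\overline\delta M=:P_1(t^n)$, as in \eqref{eq:aux1}--\eqref{eq:aux2}. Testing \eqref{eq:disc2} with $q_h=c_h^n$, bounding $\la\delta u_h^n,c_h^n\ra_{h,\E}\le\overline\delta M\,\|c_h^n\|_{L^\infty(\E)}$, inserting Lemma~\ref{lem:interpolation} together with the $L^1$-bound, and using Young's inequality to absorb a multiple of $\|\dx c_h^n\|_{L^2(\E)}^2$, one reaches $\tfrac12\dtau\|c_h^n\|_h^2+\tfrac{\underline\beta}{2}\|\dx c_h^n\|_{L^2(\E)}^2\le P_2(t^n)$; multiplying by $\tau$, summing over $k\le n$ and using \eqref{eq:identity} and \eqref{eq:equiv} yields the analogue of \eqref{eq:aux3}. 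Step~3: testing \eqref{eq:disc2} with $q_h=\dtau c_h^n$ and applying \eqref{eq:identity} to the three symmetric terms gives
\begin{align*}
\|\dtau c_h^n\|_h^2+\tfrac12\dtau\|\beta^{1/2}\dx c_h^n\|_{L^2(\E)}^2+\tfrac12\dtau\la\gamma c_h^n,c_h^n\ra_{h,\E}\le\overline\delta\,\|u_h^n\|_h\,\|\dtau c_h^n\|_h;
\end{align*}
bounding $\|u_h^n\|_h\le\sqrt3\,\|u_h^n\|_{L^2(\E)}$, inserting $\|u_h^n\|_{L^2(\E)}\le C_G(M^{2/3}\|\dx u_h^n\|_{L^2(\E)}^{1/3}+M)$, absorbing $\tfrac12\|\dtau c_h^n\|_h^2$, then multiplying by $\tau$ and summing produces the discrete versions of \eqref{eq:aux4} and, after squaring and a H\"older estimate in the sum, of \eqref{eq:aux5}.

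Steps~4--5: testing \eqref{eq:disc1} with $v_h=u_h^n$, using \eqref{eq:upwBounded} to replace $\|\widehat\pi_h^{n-1}u_h^n\|_{L^\infty(\E)}$ by $\|u_h^n\|_{L^\infty(\E)}$, inserting $\|u_h^n\|_{L^\infty(\E)}\le C_G(M^{1/3}\|\dx u_h^n\|_{L^2(\E)}^{2/3}+M)$ and applying Young's inequality gives
\begin{align*}
\tfrac12\dtau\|u_h^n\|_h^2+\tfrac{\underline\alpha}{2}\|\dx u_h^n\|_{L^2(\E)}^2\le C_6\bigl(\|\dx c_h^{n-1}\|_{L^2(\E)}^6+\|\dx c_h^{n-1}\|_{L^2(\E)}^2\bigr).
\end{align*}
Multiplying by $\tau$ and summing over $k\le n$, the shifted sum $\sum_{k=1}^n\tau\|\dx c_h^{k-1}\|_{L^2(\E)}^6$ is controlled exactly as in the estimate following \eqref{eq:aux5}: using the pointwise bound on $\|\dx c_h^{k-1}\|^4$ from Step~3, then the Step~2 bound on $\sum_k\tau\|\dx c_h^k\|^2$, then Young's inequality, one gets $\sum_{k=1}^n\tau\|\dx c_h^{k-1}\|_{L^2(\E)}^6\le P_6(t^n)+\tfrac{\underline\alpha}{4C_6}\sum_{k=1}^n\tau\|\dx u_h^k\|_{L^2(\E)}^2$, the $k=0$ term being harmless since $\|\dx c_h^0\|_{L^2(\E)}=\|\dx\widetilde\pi_h c_0\|_{L^2(\E)}\le C\|c_0\|_{H^1(\E)}$. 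Inserting this, absorbing the remaining $\tfrac{\underline\alpha}{2}$-fraction of $\sum_k\tau\|\dx u_h^k\|^2$ into the left-hand side, and using \eqref{eq:identity} and \eqref{eq:equiv} once more yields $\|u_h^n\|_{L^2(\E)}^2+\tfrac{\underline\alpha}{4}\sum_{k=1}^n\tau\|\dx u_h^k\|_{L^2(\E)}^2\le P_7(t^n)$; feeding this back into the Step~3 estimates gives the bound for $c_h^n$, and since all $P_i$ are increasing one may replace the final-level norms by the maximum over $k\le n$.

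The step I expect to require the most care is the bookkeeping of the lumped inner products: verifying that $\la\gamma c_h^n,c_h^n\ra_{h,\E}\ge 0$ and that the right-hand side terms $\la\delta u_h^n,c_h^n\ra_{h,\E}$ and $\la\delta u_h^n,\dtau c_h^n\ra_{h,\E}$ are controlled by $M\|c_h^n\|_{L^\infty(\E)}$ and $\|u_h^n\|_h\|\dtau c_h^n\|_h$ respectively, which hinges on the positivity of $u_h^n$ and the diagonal structure of $\la\cdot,\cdot\ra_{h,\E}$, and checking that \eqref{eq:identity} genuinely telescopes for each weighted seminorm that appears (legitimate precisely because $\alpha,\beta,\gamma$ are time-independent). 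The semi-implicit treatment of the drift, which evaluates $c_h$ at level $n-1$, introduces only the harmless index shift handled above and does not otherwise affect the argument; in particular, no discrete Gr\"onwall lemma is needed, since — as on the continuous level — the $\|\dx c\|^6$-contribution is absorbed outright rather than iterated.
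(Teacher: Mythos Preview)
Your proposal is correct and follows essentially the same approach as the paper: the paper's proof explicitly states that ``the proof of Theorem~\ref{thm:global} applies almost verbatim'' and then walks through the identical five steps you outline, with the same test functions ($q_h=1$, $q_h=c_h^n$, $q_h=\dtau c_h^n$, $v_h=u_h^n$), the same use of \eqref{eq:identity}, Lemma~\ref{lem:interpolation}, and the upwind bound, and the same absorption of $\sum_k\tau\|\dx c_h^{k-1}\|^6$ via the Step~3 and Step~2 estimates. Your additional remarks on the lumped-product bookkeeping and the harmless index shift from the semi-implicit drift are precisely the points the paper suppresses when it says the argument carries over verbatim.
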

\begin{proof}
The proof of Theorem~\ref{thm:global} applies almost verbatim. In particular, the constants and polynomial functions appearing in the proof are of the same form as those appearing in Theorem~\ref{thm:global}.
For convenience of the reader, we repeat the main steps.
All estimates will hold uniformly for all time steps $t^n \le T$.

\smallskip

\noindent {\em Step~1:}
As a direct consequence of Lemma~\ref{lem:disc_wellposed}, we obtain 
\begin{align*}
\|u_h^n\|_{L^1(\E)} = \|u_h^0\|_{L^1(\E)} 
=: M_h. 
\end{align*}
Without loss of generality we may set $M_h=M$ by altering either of the two constants in the respective proofs.
Testing \eqref{eq:disc2} with $q_h=1$ and using the positivity of $c_h^n$, we get
\begin{align*}
\frac{1}{\tau}\|c_h^n\|_{L^1(\E)} + \underline\gamma \|c_h^n\|_{L^1(\E)} \le \frac{1}{\tau}\|c_h^{n-1}\|_{L^1(\E)} + \overline\delta \|u_h^n\|_{L^1(\E)}. 
\end{align*}
From the uniform bound on $\|u_h^n\|_{L^1(\E)}$ and induction, we thus deduce that 
\begin{align*}
\|c_h^n\|_{L^1(\E)} \le  \|c_h^0\|_{L^1(\E)} + t^n \overline \delta M_h =: P_1(t^n).
\end{align*}
Note that the polynomial $P_1(t)$ has the same form as that in the proof of Theorem~\ref{thm:global}.

\smallskip

\noindent {\em Step~2:}
Testing \eqref{eq:disc2} with $q_h=c_h^n$ and using \eqref{eq:identity} leads to 
\begin{align*}
\frac{1}{2} \dtau \|c_h^n\|_{L^2(\E)}^2 + \underline \beta \|\dx c_h^n\|^2_{L^2(\E)} 
\le \overline\delta \|u_h^n\|_{L^1(\E)} \|c_h^n\|_{L^\infty(\E)} 
\le P_2(t^n) + \tfrac{\underline\beta}{2} \|\dx c_h^n\|^2_{L^2(\E)},
\end{align*}
with the same polynomial $P_2(t)$ as in the proof of Theorem~\ref{thm:global}.
Summation over $n$ yields
\begin{align*}
\|c_h^n\|^2_{L^2(\E)} + \underline \beta \sum_{k=1}^n \tau \|\dx c_h^k\|_{L^2(\E)}^2 
\le P_3(t^n). 
\end{align*}

\smallskip

\noindent {\em Step~3:}
By testing equation \eqref{eq:disc2} with $q_h=\dtau c_h^n$ and proceeding with the same arguments 
as in the proof of Theorem~\ref{thm:global}, one arrives at 
\begin{align*}
\|\dx c_h^n\|_{L^2(\E)}^2  + \sum_{k=1}^n \tau \|\dtau c_h^n\|_{L^2(\E)}^2
\le P_4(t^n) + C_4 \sum_{k=1}^n \tau \|\dx u_h^k\|_{L^2(\E)}^{2/3}.
\end{align*}
Squaring this estimate and some elementary estimates further lead to
\begin{align*}
\|\dx c_h^n\|_{L^2(\E)}^4 \le P_5(t^n) + C_5  (t^n)^{4/3}\big( \sum_{k=1}^n \tau \|\dx u_h^k\|_{L^2(\E)}^2 \big)^{2/3}.
\end{align*}
The constant $C_5$ and the polynomial $P_5$ are again of the same form as those in Theorem~\ref{thm:global}.

\smallskip

\noindent {\em Step~4:}
We now turn to the estimates for $u_h^n$. 
By testing \eqref{eq:disc1} with $v_h=u_h^n$, using \eqref{eq:identity}, 
and proceeding in the same manner as in the proof of Theorem~\ref{thm:global}, we get 
\begin{align*}
\frac{1}{2} \dtau \|u_h^n\|_{L^2(\E)}^2 + \underline\alpha \|\dx u_h^n\|_{L^2(\E)}^2 
\le  C_5 \left(\|\dx c_h^{n-1}\|_{L^2(\E)}^6 + \|\dx c_h^{n-1}\|_{L^2(\E)}^2 \right) + \frac{\underline\alpha}{2} \|\dx u_h^n\|^2_{L^2(\E)}.
\end{align*}
From the previous estimates for $c_h^n$, we may then further deduce that 
\begin{align*}
\sum_{k=1}^n \tau \|\dx c_h^{k-1}\|_{L^2(\E)}^6 \le P_6(t^n) + \frac{\underline\alpha}{4 C_5} \sum_{k=1}^n \tau \|\dx u_h^n\|_{L^2(\E)}^2,
\end{align*}
and this bound holds with the same polynomial $P_6(t)$ as in the proof of Theorem~\ref{thm:global}.

\smallskip

\noindent {\em Step~5:}
A combination of the estimates in Step~4 now leads to 
\begin{align*}
\|u_h^n\|_{L^2(\E)}^2 + \underline \alpha \sum_{k=1}^n \tau \|\dx u_h^k\|_{L^2(\E)}^2 \le P_7(t^n),
\end{align*}
which yields the desired bound for $u_h^n$. 
The corresponding estimates for $c_h^n$ then follow by inserting 
this bound into the estimates of Step~2 and Step~3. 
\end{proof}

Let us emphasize that all arguments used for the analysis of the problem on the continuous level 
carry over to the discrete setting almost verbatim. The reason for this is that by its variational character,
the proposed method with mass lumping, upwinding, and implicit time integration inherits all important structures of 
the continuous problem.

\section{Convergence} \label{sec:convergence}

Based on the uniform bounds of the discrete solution provided by Theorem~\ref{thm:disc}, we now establish the convergence of the discretization scheme towards the unique solution of the continuous problem without additional regularity assumptions. 
For a convenient presentation of our results, we will interpret the discrete functions $(w_h^n)_{n \ge 0}$ as 
(piecewise) continuous functions of time. Such extensions will be denoted with double subscripts $w_{h,\tau}$.
\begin{theorem}
Let (A1)--(A2) hold and let $(u,c)$ and $(u_h^n,c_h^n)_{n \ge 0}$ denote the weak solutions of problem \eqref{eq:sys1}--\eqref{eq:sys6} and the discrete solution of Problem~\ref{prob:disc}, respectively. 
Furthermore, let $(u_{h,\tau},c_{h,\tau})$ be the piecewise linear interpolation of $(u_h^n,c_h^n)_{n \ge 0}$ in time. Then
\begin{alignat*}{2}
\|u - u_{h,\tau}\|_{L^2(0,T;L^2(\E))} + \|c - c_{h,\tau}\|_{L^2(0,T;L^2(\E))} \to 0, \qquad h,\tau \to 0.
\end{alignat*}
\end{theorem}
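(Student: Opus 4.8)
The natural strategy is a compactness argument: first extract a weakly/weakly-$*$ convergent subsequence of the discrete solutions, then pass to the limit in the discrete variational identities \eqref{eq:disc1}--\eqref{eq:disc2} to identify the limit as a weak solution of \eqref{eq:sys1}--\eqref{eq:sys6}, and finally invoke uniqueness (which follows from the perturbation/fixed-point arguments behind Theorem~\ref{thm:local}, applied on a succession of short intervals covering $[0,T]$) to conclude that the whole sequence converges, not merely a subsequence. The first step is immediate from Theorem~\ref{thm:disc}: the uniform bounds on $\|u_h^n\|_{L^2(\E)}$, $\sum_k \tau\|\dx u_h^k\|_{L^2(\E)}^2$, $\|\dx c_h^n\|_{L^2(\E)}$, and $\sum_k\tau\|\dtau c_h^k\|_{L^2(\E)}^2$ give, for the piecewise-linear-in-time interpolants and the piecewise-constant interpolants $\bar u_{h,\tau},\bar c_{h,\tau}$, boundedness of $u_{h,\tau}$ in $L^2(0,T;H^1(\E))$, of $c_{h,\tau}$ in $L^\infty(0,T;H^1(\E))$, of $\dt c_{h,\tau}$ in $L^2(0,T;L^2(\E))$, and — after testing \eqref{eq:disc1} with arbitrary $v_h$ and using the stability of $\pi_h$ on $H^1(\E)$ together with \eqref{eq:upwBounded} — of $\dt u_{h,\tau}$ in $L^2(0,T;H^1(\E)')$. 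Hence there is a subsequence with $u_{h,\tau}\rightharpoonup u$ in $L^2(0,T;H^1(\E))$, $\dt u_{h,\tau}\rightharpoonup \dt u$ in $L^2(0,T;H^1(\E)')$, $c_{h,\tau}\rightharpoonup c$ in $L^2(0,T;H^1(\E))$ and weakly-$*$ in $L^\infty(0,T;H^1(\E))$, $\dt c_{h,\tau}\rightharpoonup\dt c$ in $L^2(0,T;L^2(\E))$.

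\textbf{Strong convergence.} The Aubin--Lions--Simon lemma applied to $W(0,T)=L^2(0,T;H^1(\E))\cap H^1(0,T;H^1(\E)')$, which embeds compactly into $L^2(0,T;L^2(\E))$, upgrades $u_{h,\tau}\to u$ strongly in $L^2(0,T;L^2(\E))$; similarly $c_{h,\tau}\to c$ strongly in $L^2(0,T;L^2(\E))$ (indeed in $C([0,T];L^2(\E))$, using the $H^1$ bound in space and the $L^2$ bound on $\dt c_{h,\tau}$). One also needs that the various interpolants converge to the \emph{same} limit: since $\|u_{h,\tau}-\bar u_{h,\tau}\|_{L^2(0,T;L^2(\E))}^2 \le \tfrac{\tau^2}{3}\sum_k\tau\|\dtau u_h^k\|^2 = \mathcal O(\tau^2\cdot\tau^{-2/3}\cdots)$ — more carefully, one controls this by $\tau$ times the (possibly unbounded) sum $\sum_k\tau\|\dtau u_h^k\|_{H^1(\E)'}^2$, which is bounded by the estimate above — the piecewise-constant and piecewise-linear versions share the limit, and likewise for $c$. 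Care is needed here because we only control $\dtau u_h^k$ in the $H^1(\E)'$ norm, so the identification of interpolant limits should be carried out in $L^2(0,T;H^1(\E)')$ and combined with the weak $H^1$-space bound by interpolation.

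\textbf{Passing to the limit.} Fix a test function $v\in H^1(\E)$ and $\varphi\in C^\infty_c(0,T)$; choose $v_h=\widetilde\pi_h v$ (or the nodal interpolant) in \eqref{eq:disc1}, multiply by $\varphi(t^n)$, and sum in $n$ after multiplying by $\tau$, producing a discrete-in-time version of the weak identity. In each term I would replace the lumped product $\la\cdot,\cdot\ra_{h,\E}$ by $\la\cdot,\cdot\ra_\E$ at the cost of $\mathcal O(h)$ using a standard quadrature-error estimate together with \eqref{eq:equiv}, replace $v_h$ by $v$ using \eqref{eq:clement1}, and let $h,\tau\to0$. The linear terms pass to the limit by the weak convergences above together with the strong convergence of the test functions. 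The only genuinely delicate term is the convective one, $\int_0^T\la\chi\,\widehat\pi_h^{n-1}u_h^n\,\dx c_h^{n-1},\dx v_h\ra_\E\,\varphi\,dt$: here I would write $\widehat\pi_h^{n-1}u_h^n = u_h^n + (\widehat\pi_h^{n-1}u_h^n - u_h^n)$ and bound the second piece in $L^2(0,T;L^\infty(\E))$ by $Ch^{1/2}\big(\sum_k\tau\|\dx u_h^k\|^2\big)^{1/2}\to0$ via \eqref{eq:upwind2} with $p=2$; similarly replace $\dx c_h^{n-1}$ by $\dx c_h^n$ at cost $\sum_k\tau\|\dx(c_h^n-c_h^{n-1})\|$, controllable by the bound on $\sum_k\tau\|\dtau c_h^k\|^2$ — although since the latter is only an $L^2$ (not $H^1$) bound, this shift is better handled by testing against the smooth $\dx v$ and using a duality/integration argument, or by noting $c_h^{n-1}\to c$ in $L^2(0,T;H^1(\E))$ weakly with the same limit as $c_h^n$. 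Then the product $u_h^n\,\dx c_h^{n-1}$ converges weakly in $L^1$ since $u_h^n\to u$ strongly in $L^2(0,T;L^2(\E))$ and $\dx c_h^{n-1}\rightharpoonup\dx c$ weakly in $L^2(0,T;L^2(\E))$, giving $\chi u\,\dx c$ in the limit (one needs $u_h^n\in L^\infty$-type control, or rather only that strong $\times$ weak $\to$ weak in $L^1$, which suffices when paired with the bounded test gradient $\dx v\in L^\infty(\E)$). The initial data pass via $u_{h,\tau}(0)=\widetilde\pi_h u_0\to u_0$ and $c_{h,\tau}(0)=\widetilde\pi_h c_0\to c_0$ in $L^2(\E)$ by \eqref{eq:clement1}. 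This identifies $(u,c)$ as a weak solution on $[0,T]$; uniqueness of weak solutions — established along the lines of Theorem~\ref{thm:local}, using the a priori bounds of Theorem~\ref{thm:global} to iterate the local uniqueness over $[0,T]$ — forces the full sequence to converge.

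\textbf{Main obstacle.} The crux is handling the nonlinear convective term without any extra regularity: one must argue that the upwinding operator $\widehat\pi_h^{n-1}$ and the time-lag $c_h^{n-1}$ versus $c_h^n$ are consistent perturbations (quantitatively $\mathcal O(h^{1/2})$ and $\mathcal O(\tau^{1/2})$ in the relevant norms, using only the uniform bounds of Theorem~\ref{thm:disc}), and then that the remaining bilinear product converges by the compactness-plus-weak-convergence mechanism. The lumped-mass quadrature error and the interpolation-of-test-function error are routine $\mathcal O(h)$ corrections; the subsequence-to-full-sequence upgrade rests entirely on having uniqueness available, which the paper has already secured.
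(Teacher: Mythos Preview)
Your proposal is correct and follows essentially the same compactness-plus-uniqueness route as the paper: uniform bounds from Theorem~\ref{thm:disc}, Aubin--Lions for strong $L^2(L^2)$ convergence, passage to the limit in the variational identities with the convective term handled by splitting off the $\mathcal O(h^{1/2})$ upwind error via \eqref{eq:upwind2} and then a strong-times-weak product argument, followed by uniqueness to upgrade from subsequences. The only small slip is the claim that $\dx v\in L^\infty(\E)$ for a general test function $v\in H^1(\E)$; the cleaner fix (which the paper implicitly uses) is that in one dimension $H^1(\E)\hookrightarrow\hookrightarrow C(\E)$, so Aubin--Lions actually yields $\bar u_{h,\tau}\to u$ strongly in $L^2(0,T;L^\infty(\E))$, and then the product $\bar u_{h,\tau}\,\dx\pi_h v$ converges strongly in $L^2(0,T;L^2(\E))$ against the weakly convergent $\dx\widehat c_{h,\tau}$.
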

\begin{proof}
The proof is based on standard arguments for the analysis of nonlinear parabolic problems, see e.g. \cite{Roubicek13}. 
We therefore only sketch the main arguments.

\smallskip 

\noindent 
{\em Step~1a:}
Let $(\overline u_{h,\tau},\overline c_{h,\tau})$ be piecewise constant in time with values given by $\overline u_{h,\tau}(t) = u_h^n$ and $\overline c_{h,\tau}(t) = c_h^n$ for $t^{n-1} < t \le t^n$. 
Furthermore, let $\widehat c_{h,\tau}$ and $\widehat \pi_{h,\tau}$ be piecewise constant in time with values $\widehat c_{h,\tau}(t) = c_h^{n-1}$ and $\widehat \pi_{h,\tau}(t) = \pi_h^{n-1}$ for $t^{n-1} < t \le t^n$. Then from the variational characterization \eqref{eq:disc1}--\eqref{eq:disc2}, one can deduce that
\begin{alignat*}{3}
&\la \dt  u_{h,\tau}(t), v_h\ra_{h,\E} + \la \alpha \dx \overline u_{h,\tau}(t), \dx v_h\ra_{\E} 
   = \la \chi \widehat\pi_{h,\tau}(t) \overline u_{h,\tau}(t) \dx \widehat c_h(t), \dx v_h\ra_\E,\\
&\la \dt  c_{h,\tau}(t),q_h\ra_{h,\E} + \la \beta \dx \overline c_{h,\tau}(t), \dx q_h\ra_\E 
   + \la \gamma \overline c_{h,\tau}(t), q_h\ra_{h,\E} = \la \delta \overline u_{h,\tau}(t), q_h\ra_{h,\E},
\end{alignat*}
for all test functions $v_h \in V_h$ and $q_h \in V_h$, and for a.a. $0 \le t \le T$. 

\smallskip 

\noindent 
{\em Step~1b:}
From the a-priori estimates of Theorem~\ref{thm:disc} and the weak compactness of bounded sets in reflexive Banach spaces \cite{Roubicek13}, one may deduce that 
there exist limit functions $u^* \in L^2(0,T;H^1(\E)) \cap H^1(0,T;H^1(\E)')$ and $c^* \in L^2(0,T;H^1(\E)) \cap H^1(0,T;L^2(\E))$ with
\begin{alignat*}{2}
 u_{h,\tau} &\rightharpoonup u^* \qquad && \text{in } L^2(0,T;H^1(\E)) \cap H^1(0,T;H^1(\E)') \\
  \overline u_{h,\tau} &\rightharpoonup u^* \qquad && \text{in } L^2(0,T;H^1(\E))
\end{alignat*}
as well as 
\begin{alignat*}{2}
 c_{h,\tau} &\rightharpoonup c^* \qquad && \text{in } L^2(0,T;H^1(\E)) \cap H^1(0,T;L^2(\E)) \\ 
 c_{h,\tau} &\rightharpoonup^* c^* \qquad && \text{in } L^{\infty}(0,T;H^1(\E)) \\
\overline c_{h,\tau} &\rightharpoonup c^* && \text{in } L^2(0,T;H^1(\E)) \\
\widehat c_{h,\tau} &\rightharpoonup c^* && \text{in } L^2(0,T;H^1(\E)).
\end{alignat*}
This implies that $\dt u_{h,\tau} \rightharpoonup \dt u^*$ and $\dt c_{h,\tau} \rightharpoonup \dt c^*$ in $L^2(0,T;H^1(\E)')$; see \cite{Roubicek13} for details. 

\smallskip 

It remains to show that $(u^*, c^*)$ satisfies \eqref{eq:var1}--\eqref{eq:var2}
and that the initial values satisfy $u_{h,\tau}(0) \rightharpoonup u^*(0) = u_0$ and 
$c_{h,\tau}(0) \rightharpoonup c^*(0) = c_0$. 
We start with verifying \eqref{eq:var2}.

\smallskip

\noindent 
{\em Step~2a:}
Recall that $\pi_h : L^2(\E) \to V_h$ denotes the $L^2$-orthogonal projection onto $V_h$.
Then
\begin{align*}
\int_{t'}^{t''} \la \dt c_{h,\tau}(t), v\ra_{h,\E} dt
 &= \int_{t'}^{t''} \la \dt c_{h,\tau}(t), v\ra_{\E} dt+ \int_{t'}^{t''} \la \dt c_{h,\tau}(t), \pi_h v - v\ra_{\E} dt \\
        &+ \int_{t'}^{t''}  \la \dt c_{h,\tau}(t), \pi_h v\ra_{h,\E} - \la \dt c_{h,\tau}(t), \pi_h v\ra_\E dt  
        = (i) + (ii) + (iii).
\end{align*}
By the weak convergence of $\dt c_{h,\tau}$ in $L^2(0,T;H^1(\E)')$, we infer that $(i) \to \int_{t'}^{t''} \la \dt c^*(t), v\ra_{\E} dt$ 
for all $v \in H^1(\E)$ as $h,\tau \to 0$. 
From the  density of $\{V_h\}_{h>0}$ in $H^1(\E)$, the approximation properties of the $L^2$-projection $\pi_h$, 
and the quasi-uniformity of the mesh $T_h(\E)$, 
one can see that $\|\pi_h v - v\|_{H^1(\E)} \to 0$ with $h \to 0$. The uniform boundedness of $\dt c_{h,\tau}$ in the norm of $L^2(0,T;H^1(\E)')$ therefore yields that $(ii) \to 0$ for all $v \in H^1(\E)$ when $h,\tau \to 0$. 
Now let $L_{h,\tau} : H^1(\E) \to \RR$ be linear operators defined by 
$$
L_{h,\tau} v = \int_{t'}^{t''}  \la \dt c_{h,\tau}(t), \pi_h v\ra_{h,\E} - \la \dt c_{h,\tau}(t), \pi_h v\ra_\E dt  = (iii).
$$ 
Then by the uniform bounds for the discrete solution $(u_h^n,c_h^n)_{n \ge 0}$
and the quasi-uniformity of the mesh $T_h(\E)$, one can see that
\begin{align*}
|L_{h,\tau} v| \le C \|\dt c_{h,\tau}\|_{L^2(t',t'',L^2(\E))} h^s \|v\|_{H^k_{pw}(\E)}
        \le C' \|\dt c_{h,\tau}\|_{L^2(t',t'',H^1(\E)')} h^{k-1}\|v\|_{H^k_{pw}(\E)}, 
\end{align*}
for $1 \le k \le 2$ and all $v \in H^1(\E) \cap H^k_{pw(\E)}$. 
Recall that $H^k_{pw}=\{v \in L^2(\E) : v|_e \in H^k(e)\}$ is the space of piecewise smooth functions. 
Thus, the family $\{L_{h,\tau}\}_{h,\tau>0}$ of linear operators is uniformly bounded on $H^1(\E)$ and 
$L_{h,\tau} v \to 0$ for $v \in H^{2}_{pw}(\E) \cap H^1(\E)$ which is dense in $H^1(\E)$. 
Hence, $L_{h,\tau} v \to 0$ for all $v \in H^1(\E)$ by the Banach-Steinhaus theorem \cite{Heuser82}. 
In summary, we thus have shown that 
\begin{align*}
\int_{t'}^{t''} \la \dt c_{h,\tau}(t) , \pi_h v\ra_{\E,h} dt \to \int_{t'}^{t''}\la \dt c^*(t), v\ra_{\E} dt
\end{align*}
for all $0 \le t'< t'' \le T$ and all $v \in H^1(\E)$ as $h,\tau \to 0$. 
It then follows from Lebesgue's differentiation theorem \cite{Evans10} that 
\begin{align*} 
\la \dt c_{h,\tau}(t), \pi_h v\ra_{\E,h} \to \la \dt c^*(t), v\ra_{\E}
\end{align*}
for all $v \in H^1(\E)$ and for a.a. $0 \le t \le T$. 

\smallskip

\noindent 
{\em Step~2b:}
In a similar manner, one can show that 
\begin{align*}
&\la \beta \dx \overline c_{h,\tau}(t),\dx \pi_h v\ra_{\E}  \to \la \beta \dx c^*(t), \dx  v\ra_{\E}, \\
&\la \gamma \overline c_{h,\tau}(t),\pi_h v\ra_{\E,h}  \to \la \gamma c^*(t), v\ra_{\E} , \\
&\la \delta \overline u_{h,\tau}(t),\pi_h v\ra_{\E,h} \to \la \delta u^*(t), v\ra_{\E},
\end{align*}
for all $v \in H^1(\E)$ and for a.a. $0 \le t \le T$. 
This shows that $(u^*,c^*)$ satisfies equation \eqref{eq:var2}. 

\smallskip 

We next turn to the verification of identity \eqref{eq:var1}. 

\smallskip

\noindent 
{\em  Step~3a:}
With the very same arguments as above, one can show that
\begin{align*}
&\la \dt u_{h,\tau}(t), \pi_h v\ra_{\E,h} \to \la \dt u^*(t), v\ra_{\E} , \\
& \la \alpha \dx \overline u_{h,\tau}(t),\dx \pi_h v\ra_{\E} \to \la \alpha \dx u^*(t), \dx  v\ra_{\E} , 
\end{align*}
for all $v \in H^1(\E)$ and for a.a. $0 \le t \le T$ as $h,\tau \to 0$. 
It thus remains to establish the convergence for the convective term, which we do next.

\smallskip 

\noindent 
{\em Step~3b:}
Let us define linear operators $B_{h,\tau}: H^1(\E) \to \RR$ by
\begin{align*}
B_{h,\tau} v = \int_{t'}^{t''} \la \chi \widehat \pi_{h,\tau}(t) \overline u_{h,\tau}(t) \dx \widehat c_{h,\tau}(t) , \pi_h v\ra_\E dt.  
\end{align*}
Then from the uniform bounds for the discrete solution $(u_h^n,c_h^n)_{n \ge 0}$,
the $H^1$-stability of the $L^2$-projection $\pi_h$ on the quasi-uniform mesh $T_h(\E)$, 
and \eqref{eq:upwBounded}, one can deduce that 
\begin{align*}
|B_{h,\tau} v| \le C \|\overline u_{h,\tau}\|_{L^2(0,T,L^\infty(\E))} \|\widehat c_{h,\tau}\|_{L^\infty(0,T;H^1(\E))} \|v\|_{H^1(\E)}.
\end{align*}
This shows that the family of operators $\{B_{h,\tau}\}_{h,\tau>0}$ is uniformly bounded for all $h,\tau >0$. 
As a next step, we decompose 
\begin{align*}
B_{h,\tau} v 
&= \int_{t'}^{t''} \la \chi \dx \widehat c_{h,\tau}(t) \overline u_{h,\tau}(t), \dx \pi_h v \ra_\E dt \\
&\qquad + \int_{t'}^{t''} \la \chi \dx \widehat c_{h,\tau}(t) (\widehat \pi_{h,\tau}(t) \overline u_{h,\tau}(t) - \overline u_{h,\tau}(t)), \dx \pi_h v \ra_\E dt 
= (i) + (ii). 
\end{align*}
By the Aubin-Lions lemma \cite{Roubicek13}, we know that $\overline u_{h,\tau} \to u^*$ in $L^2(0,T;L^2(\E))$.
Moreover, $\dx \pi_h v \to \dx v$ in $L^2(\E)$ and $\dx \widehat c_{h,\tau} \rightharpoonup \dx c^*$ in $L^2(0,T;L^2)$. 
This implies that
\begin{align*} 
(i) \to \int_{t'}^{t''} \la \chi \dx c^*(t) u^*(t), \dx v\ra_\E dt 
\end{align*}
for all $0\le t' \le t'' \le T$ and for any test function $v \in H^1(\E)$. 
From the estimate \eqref{eq:upwind2}, we therefore deduce that $\|\widehat \pi_{h,\tau} \overline u_{h,\tau} - \overline u_{h,\tau}\|_{L^2(t',t'';L^\infty(\E))} \le C h^{1/2} \|\dx \overline u_{h,\tau}\|_{L^2(t',t'';L^2(\E))}$. 
Using the $H^1$-stability of the $L^2$-projection $\pi_h$ and the uniform a-priori bounds for 
the discrete solution $(u_h^n,c_h^n)_{n \ge 0}$, one can then see that 
\begin{align*}
|(ii)| \le C h^{1/2} \to 0 \qquad \text{with } h,\tau \to 0.
\end{align*}
In summary, we thus have shown that
\begin{align*} 
B_{h,\tau} v \to \int_{t'}^{t''} \la \chi u^*(t) \dx c^*(t) , \dx v\ra_\E dt \qquad \text{with } h,\tau \to 0
\end{align*}
for all $v \in H^1(\E)$. With the assertions of Step~3a, this shows that $(u^*,c^*)$ solves \eqref{eq:var1}. 

\smallskip

\noindent 
{\em Step~4:}
From the estimates for the quasi-interpolation operator $\widetilde \pi_h$ stated in Section~\ref{sec:problem}, 
one can deduce that $\widetilde \pi_h v \to v$ in $L^2(\E)$ for all $v \in L^2(\E)$. 
Together with the continuity of the trace mapping for the space $W(0,T)$, this yields $u^*(0)=u_0$ and $c^*(0)=c_0$.

\smallskip 

\noindent 
{\em Step~5:}
In summary, we have shown that $(u^*,c^*)$ is a weak solution of \eqref{eq:sys1}--\eqref{eq:sys6},
and from the uniqueness stated in Theorem~\ref{thm:global}, we infer that $u^* = u$ and $c^* = c$.
\end{proof}

\section{Error estimates} \label{sec:rates}

Under suitable smoothness assumptions on the true solution, we can now also derive quantitative convergence rates. The aim of this section is to prove the following result.
\begin{theorem}\label{thm:error}
Let (A1)--(A3) hold and assume that the solution $(u,c)$ of \eqref{eq:sys1}--\eqref{eq:sys6} satisfies \eqref{eq:reg1}--\eqref{eq:reg2}. Moreover, let $(u_h^n,c_h^n)_{n \ge 0}$ be the solution of Problem~\ref{prob:disc} and denote by $(u_{h,\tau}, c_{h,\tau})$ its linear piecewise interpolation in time. Then 
\begin{align*}
\|u - u_{h,\tau}\|_{L^\infty(0,T;L^2(\E))} + \|u - u_{h,\tau}\|_{L^2(0,T;H^1(\E))} \le C (h+\tau), \\
\|c - c_{h,\tau}\|_{L^\infty(0,T;L^2(\E))} + \|c - c_{h,\tau}\|_{L^2(0,T;H^1(\E))} \le C (h+\tau) , 
\end{align*}
with constant $C$ that only depends on the bounds for the coefficients, the time horizon $T$, the geometry of the network, and the norm of the solution $(u,c)$ in \eqref{eq:reg1}--\eqref{eq:reg2}. 
\end{theorem}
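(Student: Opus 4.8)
The plan is to adapt the standard energy argument for Galerkin approximations of parabolic problems to the lumped, upwinded scheme and to the quadratic chemotactic coupling, using the uniform a-priori bounds of Theorem~\ref{thm:disc} and the higher regularity \eqref{eq:reg1}--\eqref{eq:reg2} to control the nonlinearity. As usual, I would split the errors with the $L^2$-projection $\pi_h$, writing $u(t^n)-u_h^n = \rho_u^n + \theta_u^n$ with $\rho_u^n = u(t^n)-\pi_h u(t^n)$ and $\theta_u^n = \pi_h u(t^n) - u_h^n \in V_h$, and similarly $\rho_c^n,\theta_c^n$ for $c$. The approximation parts $\rho_u^n,\rho_c^n$ are bounded directly by \eqref{eq:l2-proj} and \eqref{eq:reg1}--\eqref{eq:reg2}, which gives $O(h^2)$ in $L^2(\E)$ and $O(h)$ in $H^1(\E)$; analogous bounds for the discrete time-differences follow from $\dtau\rho_u^n = (I-\pi_h)\dtau u(t^n)$ together with $\dt u \in L^2(0,T;H^1(\E))$ and $\dt c \in L^\infty(0,T;H^1(\E))$. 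The initial errors $\theta_u^0,\theta_c^0$ are $O(h)$ in $L^2(\E)$ by the approximation properties of $\widetilde \pi_h$ and $\pi_h$.

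Next I would derive the error equations for $\theta_u^n,\theta_c^n$ by evaluating the variational identities \eqref{eq:var1}--\eqref{eq:var2} at $t=t^n$, testing with $v_h,q_h\in V_h$, and subtracting \eqref{eq:disc1}--\eqref{eq:disc2}. The resulting right-hand sides collect consistency terms of three kinds: (i) the time-discretization defects $\dt u(t^n)-\dtau u(t^n)$ and $\dt c(t^n)-\dtau c(t^n)$, for which a Taylor argument and \eqref{eq:reg1}--\eqref{eq:reg2} yield $\sum_n \tau\|\dt u(t^n)-\dtau u(t^n)\|_{H^1(\E)'}^2 \le C\tau^2$ and the analogous $L^2(\E)$-bound for $c$; (ii) the mass-lumping quadrature errors $\la\cdot,\cdot\ra_{h,\E}-\la\cdot,\cdot\ra_\E$, which are $O(h)$ on $V_h\times V_h$ and against the smooth data; and (iii) the difference of convective terms $\chi\widehat\pi_h^{n-1}u_h^n\dx c_h^{n-1}-\chi u(t^n)\dx c(t^n)$, which I would decompose into the upwind defect $\chi(\widehat\pi_h^{n-1}-I)u_h^n\dx c_h^{n-1}$, the time lag $\chi u(t^n)\dx(c(t^{n-1})-c(t^n))$ (of size $O(\tau)$ by $\dt c\in L^\infty(0,T;H^1(\E))$), and the genuine contributions $\chi(u(t^n)-u_h^n)\dx c_h^{n-1}$ and $\chi u(t^n)\dx(c(t^{n-1})-c_h^{n-1})$, each of which is further split into its $\rho$- and $\theta$-parts.

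Then I would test the $\theta_u$-equation with $v_h=\theta_u^n$ and the $\theta_c$-equation with $q_h=\theta_c^n$, use the identity \eqref{eq:identity} for the lumped product, the coercivity of the diffusion together with \eqref{eq:equiv}, and Young's inequality to absorb all top-order terms $\eps\|\dx\theta_u^n\|_{L^2(\E)}^2$ and $\eps\|\dx\theta_c^n\|_{L^2(\E)}^2$ into the left-hand side. Summing over $n$, estimating the consistency contributions by $C(h+\tau)^2$, and coupling the two estimates — the source term $\la\delta u_h^n,q_h\ra_{h,\E}$ feeds $\|\theta_u^n\|_{L^2(\E)}^2$ into the $c$-estimate, while the convective term feeds $\sum_n\tau\|\dx\theta_c^{n-1}\|_{L^2(\E)}^2$ into the $u$-estimate, which in turn is bounded through the $c$-estimate — and finally invoking a discrete Gronwall lemma (valid for $\tau$ below a threshold) yields $\max_m(\|\theta_u^m\|_{L^2(\E)}^2+\|\theta_c^m\|_{L^2(\E)}^2)+\sum_m\tau(\|\dx\theta_u^m\|_{L^2(\E)}^2+\|\dx\theta_c^m\|_{L^2(\E)}^2)\le C(h+\tau)^2$. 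The assertions of the theorem then follow by the triangle inequality with the $\rho$-bounds and standard time-interpolation estimates comparing $(u_{h,\tau},c_{h,\tau})$ with the nodal values $(u_h^n,c_h^n)$.

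The main obstacle is the quadratic chemotactic coupling at the discrete level, in particular the self-interaction term $\la\chi\theta_u^n\dx c_h^{n-1},\dx\theta_u^n\ra_\E$, the cross term $\la\chi u(t^n)\dx\theta_c^{n-1},\dx\theta_u^n\ra_\E$, and the upwind defect $\la\chi(\widehat\pi_h^{n-1}-I)u_h^n\dx c_h^{n-1},\dx\theta_u^n\ra_\E$. For the first I would use the one-dimensional interpolation inequality of Lemma~\ref{lem:interpolation} in the form $\|\theta_u^n\|_{L^\infty(\E)}\le C\|\theta_u^n\|_{L^2(\E)}^{1/2}\|\theta_u^n\|_{H^1(\E)}^{1/2}$ together with the uniform bound $\|\dx c_h^{n-1}\|_{L^2(\E)}\le \sqrt{P(t^n)}$ from Theorem~\ref{thm:disc}; Young's inequality then absorbs part of the term into $\underline\alpha\|\dx\theta_u^n\|_{L^2(\E)}^2$ and leaves a polynomially growing multiple of $\|\theta_u^n\|_{L^2(\E)}^2$ suitable for the Gronwall step. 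For the cross term I would use that $u(t^n)\in L^\infty(\E)$ uniformly in $n$ (from \eqref{eq:reg1} and the embedding $H^1(\E)\hookrightarrow L^\infty(\E)$) and then dominate $\sum_n\tau\|\dx\theta_c^{n-1}\|_{L^2(\E)}^2$ by the $c$-estimate. For the upwind defect I would split $u_h^n=\theta_u^n+\pi_h u(t^n)$ and use \eqref{eq:upwind2} in its $L^2(\E)$-form $\|(\widehat\pi_h^{n-1}-I)w_h\|_{L^2(\E)}\le Ch\|\dx w_h\|_{L^2(\E)}$, so that the smooth part contributes $O(h)$ while the $\theta_u^n$-part carries a factor $h^{1/2}\|\dx c_h^{n-1}\|_{L^2(\E)}$ that is absorbed into the diffusion once $h$ is small enough. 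Keeping track of which terms are absorbed, which enter the coupled Gronwall argument, and which are genuinely of order $(h+\tau)^2$ — and checking that all Young exponents and inverse inequalities are used consistently — is the delicate bookkeeping at the heart of the argument.
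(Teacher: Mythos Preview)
Your plan matches the paper's proof closely: error splitting via a projection, energy identities for the discrete errors tested against themselves, a careful decomposition of the convective consistency term, coupling of the $u$- and $c$-estimates, and a discrete Gronwall step to close. The paper uses the $H^1$-projection $R_h$ of \eqref{eq:h1} rather than $\pi_h$, but this is a minor difference and your $L^2$-version is equally viable in principle.

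One technical device that you do not mention, and that is more than bookkeeping, turns out to be essential. Since \eqref{eq:reg1} gives only $u\in L^2(0,T;H^2_{pw}(\E))$, pointwise-in-time bounds such as $\|\dx\rho_u^n\|_{L^2(\E)}\le Ch\|u(t^n)\|_{H^2_{pw}(\E)}$ do \emph{not} yield $\sum_n\tau\|\dx\rho_u^n\|_{L^2(\E)}^2\le Ch^2$ directly: the sum $\sum_n\tau\|u(t^n)\|_{H^2_{pw}(\E)}^2$ is not controlled by $\|u\|_{L^2(0,T;H^2_{pw}(\E))}^2$. The paper resolves this by inserting temporal averages $\overline u^n=\tfrac1\tau\int_{t^{n-1}}^{t^n}u(t)\,dt$ and splitting $u(t^n)$ through $\overline u^n$; the averaging produces a factor $\tau^{-1/2}$ on the $H^2_{pw}$-piece that cancels against the outer $\tau$ after squaring and summing. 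This device appears repeatedly --- in the diffusion consistency term in the proof of Lemma~\ref{lem:6.8} and in the treatment of terms $(ii)$, $(iv)$, $(v)$ of the convective-term lemma in Section~\ref{sec:rates}. The same trick is needed for your diffusion consistency $\la\alpha\dx\rho_u^n,\dx\theta_u^n\ra_\E$ and for the upwind ``smooth part'' $(\widehat\pi_h^{n-1}-I)\pi_h u(t^n)$: your direct use of \eqref{eq:upwind2} with $p=2$ gives only an $O(h^{1/2})$ pointwise $L^\infty$-bound, hence $O(h)$ rather than $O(h^2)$ after squaring and summing. Without the temporal-averaging step (or an interpolation-of-spaces argument upgrading to $u\in L^\infty(0,T;W^{1,\infty}(\E))$), the consistency estimate does not close at the claimed order.
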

In the usual way \cite{Varga71,Wheeler73}, we decompose the errors in the two solution components via
\begin{align}
u - u_{h,\tau} &= (u - R_h u) + (R_h u - u_{h,\tau}), \label{eq:split1}\\
c - c_{h,\tau} &= (c - R_h c) + (R_h c - c_{h,\tau}), \label{eq:split2}
\end{align}
into approximation and discrete error components. For our analysis, 
we choose the operator $R_h: H^1(\E) \to V_h$ as the $H^1$-orthogonal projection onto $V_h$
defined by 
\begin{align} \label{eq:h1}
\la \dx R_h u, \dx v_h\ra_\E + \la R_h u, v_h \ra_\E = \la \dx u, \dx v_h\ra_\E + \la u, v_h \ra_\E \qquad \forall v_h \in V_h.
\end{align}
In the following two sections, we derive bounds for the two error contributions of the individual solution components, 
and we then complete the proof of the above theorem. 

\subsection{Approximation error} 

We start with summarizing some elementary properties of the $H^1$-projection $R_h : H^1(\E) \to V_h$ defined
above. These are well-known for a single edge $e$, see e.g. \cite{BrennerScott08}, and can be generalized easily
to the network setting.
\begin{lemma} \label{lem:h1}
For any $u \in H^1(\E)$, we have $\|R_h u\|_{H^1(\E)} \le \|u\|_{H^1(\E)}$ and 
\begin{align*}
\|u - R_h u\|_{L^2(\E)} + h^{1/2} \|u - R_h u\|_{L^\infty(\E)} + h \|u - R_h u\|_{H^1(\E)}&\le C h \|\dx u\|_{L^2(\E)}
\end{align*}
with uniform constant $C$.
If $u \in H^2_{pw}(T_h(\E)) \cap H^1(\E)$, then 
\begin{align*}
\|u - R_h u\|_{L^2(\E)} + h^{1/2} \|u - R_h u\|_{L^\infty(\E)} + h \|u - R_h u\|_{H^1(\E)} \le C h^2 \|\dxx' u\|_{L^2(\E)}. 
\end{align*}
Here $\|\dxx' u\|_{L^2(\E)} = (\sum_T \|\dxx u\|_{L^2(T)}^2)^{1/2}$ denotes the norm of the broken derivative $\dxx' u$.
\end{lemma}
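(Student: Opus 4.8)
The plan is to reduce everything to the one-dimensional theory on a single edge and then sum. First I would establish the stability bound $\|R_h u\|_{H^1(\E)} \le \|u\|_{H^1(\E)}$: this is immediate from the definition \eqref{eq:h1}, since testing with $v_h = R_h u$ and applying Cauchy--Schwarz gives $\|R_h u\|_{H^1(\E)}^2 = \la \dx u, \dx R_h u\ra_\E + \la u, R_h u\ra_\E \le \|u\|_{H^1(\E)} \|R_h u\|_{H^1(\E)}$. Next, observe that $R_h$ is the Galerkin projection for the $H^1(\E)$-inner product, so Céa's lemma yields the quasi-optimality $\|u - R_h u\|_{H^1(\E)} \le \inf_{v_h \in V_h} \|u - v_h\|_{H^1(\E)}$. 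Choosing $v_h = I_h u$, the continuous piecewise linear nodal interpolant (which lies in $V_h = P_1(T_h(\E)) \cap H^1(\E)$ because $u \in H^1(\E)$ is continuous across vertices), the standard edgewise interpolation estimates give $\|u - I_h u\|_{H^1(\E)} \le C h \|\dx u\|_{L^2(\E)}$ when $u \in H^1(\E)$, and $\le C h^2 \|\dxx' u\|_{L^2(\E)}$ when additionally $u \in H^2_{pw}(T_h(\E))$; summing the squared local bounds over all elements $T \in T_h(\E)$ produces the global estimate. This settles the $H^1(\E)$-norm term in both displays.

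For the $L^2(\E)$-norm I would use the Aubin--Nitsche duality trick, carried out globally on the network. Given $u - R_h u \in L^2(\E)$, let $\psi \in H^1(\E)$ solve the variational problem $\la \dx \psi, \dx v\ra_\E + \la \psi, v\ra_\E = \la u - R_h u, v\ra_\E$ for all $v \in H^1(\E)$; by Lax--Milgram this has a unique solution, and elliptic regularity on the metric graph (the coercive problem with these natural Kirchhoff-type vertex conditions) gives $\psi \in H^2_{pw}(T_h(\E)) \cap H^1(\E)$ with $\|\psi\|_{H^2_{pw}(\E)} \le C \|u - R_h u\|_{L^2(\E)}$. Then $\|u - R_h u\|_{L^2(\E)}^2 = \la \dx \psi, \dx(u - R_h u)\ra_\E + \la \psi, u - R_h u\ra_\E = \la \dx(\psi - I_h \psi), \dx(u - R_h u)\ra_\E + \la \psi - I_h \psi, u - R_h u\ra_\E$, using Galerkin orthogonality of $R_h$ against $I_h \psi \in V_h$; bounding by $\|\psi - I_h \psi\|_{H^1(\E)} \|u - R_h u\|_{H^1(\E)} \le C h \|\psi\|_{H^2_{pw}(\E)} \|u - R_h u\|_{H^1(\E)}$ and dividing by $\|u - R_h u\|_{L^2(\E)}$ gives $\|u - R_h u\|_{L^2(\E)} \le C h \|u - R_h u\|_{H^1(\E)}$, which combined with the $H^1$-bounds already obtained yields the $O(h)$ and $O(h^2)$ estimates respectively.

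Finally, for the $L^\infty(\E)$-term with the $h^{1/2}$ weight I would combine the $L^2$-estimate with an inverse inequality on the finite element space. Writing $u - R_h u = (u - I_h u) + (I_h u - R_h u)$, the first piece satisfies $\|u - I_h u\|_{L^\infty(\E)} \le C h^{1/2} \|\dx u\|_{L^2(\E)}$ (resp.\ $\le C h^{3/2}\|\dxx' u\|_{L^2(\E)}$) by the edgewise interpolation estimate, and the second piece $I_h u - R_h u \in V_h$ satisfies $\|I_h u - R_h u\|_{L^\infty(\E)} \le C h^{-1/2} \|I_h u - R_h u\|_{L^2(\E)}$ by the inverse estimate on the quasi-uniform mesh, with $\|I_h u - R_h u\|_{L^2(\E)} \le \|u - I_h u\|_{L^2(\E)} + \|u - R_h u\|_{L^2(\E)}$ already bounded by $C h \|\dx u\|_{L^2(\E)}$ (resp.\ $C h^2 \|\dxx' u\|_{L^2(\E)}$). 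Multiplying by $h^{1/2}$ then reproduces the stated weighted $L^\infty$-bound in both regularity regimes. I expect the only genuinely non-routine point to be the elliptic regularity statement $\psi \in H^2_{pw}(\E)$ with the norm bound, i.e.\ verifying that the Kirchhoff vertex conditions are the correct natural boundary conditions making the solution piecewise-$H^2$ on the network; this is standard but is the one place where the network geometry enters nontrivially, and everything else is a verbatim transcription of the single-interval arguments in \cite{BrennerScott08}.
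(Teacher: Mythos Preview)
Your overall strategy is correct and is precisely the standard route the paper has in mind: the paper does not actually prove this lemma but simply records it as well-known, writing that the estimates ``are well-known for a single edge $e$, see e.g.\ \cite{BrennerScott08}, and can be generalized easily to the network setting.'' Your plan (stability from the definition, C\'ea plus nodal interpolation for the $H^1$-bound, Aubin--Nitsche duality for the $L^2$-bound, and an inverse inequality for the $L^\infty$-term) is exactly that textbook argument, and your remark that the only network-specific ingredient is the piecewise $H^2$-regularity for the dual problem is accurate.

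One slip to fix: the interpolation bounds you quote for $\|u - I_h u\|_{H^1(\E)}$ carry one power of $h$ too many. For $u \in H^1(\E)$ one only has $\|u - I_h u\|_{H^1(\E)} \le C \|\dx u\|_{L^2(\E)}$, and for $u \in H^2_{pw}$ one has $\|u - I_h u\|_{H^1(\E)} \le C h \|\dxx' u\|_{L^2(\E)}$; the extra $h$ appears only in the $L^2$-part. This is harmless for the final statement because the lemma already weights the $H^1$-term by $h$, so after multiplying through you recover exactly the asserted bounds, but the intermediate inequalities as you wrote them are not correct.
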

As a direct consequence of these estimates, we obtain the following bounds for the approximation error contributions in the above error splitting. 
\begin{lemma} \label{lem:6.3}
Let the assumptions of Theorem~\ref{thm:error} hold. Then
\begin{align*}
\|u - R_h u\|_{L^\infty(0,T;L^2(\E))} + \|u - R_h u\|_{L^2(0,T;H^1(\E))} \le C(u) h, \\
\|c - R_h c\|_{L^\infty(0,T;L^2(\E))} + \|c - R_h c\|_{L^2(0,T;H^1(\E))} \le C(c) h , 
\end{align*}
with $C(w) = C (\|\dx w\|_{L^\infty(0,T;L^2(\E))} + \|\dxx' w\|_{L^2(0,T;L^2(\E))})$  for $w=u,c$. 
\end{lemma}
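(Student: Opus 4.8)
The plan is to reduce the assertion to the pointwise-in-time projection estimates of Lemma~\ref{lem:h1} and then to integrate, resp.\ take suprema, over $t$ using the parabolic regularity \eqref{eq:reg1}--\eqref{eq:reg2}. The first step is to extract the precise time regularity that is needed. Since $(u,c)$ is the solution of Theorem~\ref{thm:global} satisfying in addition \eqref{eq:reg1}--\eqref{eq:reg2}, we have $u \in L^2(0,T;H^1(\E))$ together with $\dt u \in L^2(0,T;H^1(\E))$, hence $u \in H^1(0,T;H^1(\E)) \hookrightarrow C([0,T];H^1(\E))$; in particular $\|\dx u\|_{L^\infty(0,T;L^2(\E))}$ is finite, and $u(t) \in H_{pw}^2(\E)\cap H^1(\E)$ for a.a. $t$ with $\|\dxx' u\|_{L^2(0,T;L^2(\E))}$ finite. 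Analogously, \eqref{eq:reg2} together with $c \in L^\infty(0,T;H^1(\E))$ gives $c(t) \in H_{pw}^2(\E) \cap H^1(\E)$ for a.a. $t$, and both $\|\dx c\|_{L^\infty(0,T;L^2(\E))}$ and $\|\dxx' c\|_{L^2(0,T;L^2(\E))}$ are finite. Because $R_h$ is linear and bounded on $H^1(\E)$, the maps $t \mapsto R_h u(t)$ and $t \mapsto R_h c(t)$ are strongly measurable into $V_h$, so all the space--time norms below are well defined.

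Next I would apply Lemma~\ref{lem:h1} at a.a. $t \in [0,T]$. Its first estimate gives $\|u(t)-R_h u(t)\|_{L^2(\E)} \le C h\,\|\dx u(t)\|_{L^2(\E)}$, and its second, sharper estimate, valid because $u(t) \in H_{pw}^2(\E)\cap H^1(\E)$, gives $\|u(t)-R_h u(t)\|_{H^1(\E)} \le C h\,\|\dxx' u(t)\|_{L^2(\E)}$. Taking the essential supremum over $t$ in the first inequality and the $L^2(0,T)$-norm in $t$ in the second yields
\begin{align*}
\|u - R_h u\|_{L^\infty(0,T;L^2(\E))} &\le C h \,\|\dx u\|_{L^\infty(0,T;L^2(\E))},\\
\|u - R_h u\|_{L^2(0,T;H^1(\E))} &\le C h \,\|\dxx' u\|_{L^2(0,T;L^2(\E))}.
\end{align*}
Adding these two bounds gives the first line of the lemma with the stated constant $C(u)$. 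The estimate for $c$ is obtained verbatim, now invoking the regularity coming from \eqref{eq:reg2} in place of \eqref{eq:reg1}.

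I do not anticipate a genuine obstacle here: the statement is just a time-parametrized form of the classical approximation properties of the $H^1$-projection on quasi-uniform meshes, applied edgewise. The only point that requires care is checking that the time regularity actually used — in particular the $L^\infty$-in-time control of $\|\dx u(t)\|_{L^2(\E)}$, which does not follow from \eqref{eq:reg1} directly but only via the embedding $H^1(0,T;H^1(\E)) \hookrightarrow C([0,T];H^1(\E))$ — is indeed furnished by Theorem~\ref{thm:reg}.
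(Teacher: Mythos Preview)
Your proposal is correct and follows exactly the approach the paper intends: the lemma is stated there as ``a direct consequence'' of Lemma~\ref{lem:h1}, and your argument simply makes this explicit by applying the pointwise projection estimates and then passing to the appropriate time norms. The extra care you take in verifying that $\|\dx u\|_{L^\infty(0,T;L^2(\E))}$ is finite via $H^1(0,T;H^1(\E))\hookrightarrow C([0,T];H^1(\E))$ is a welcome clarification but not a deviation from the paper's reasoning.
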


For the proof of the corresponding estimates for the discrete error components
\begin{align*}
e_h^n=R_h u(t^n)-u_h^n \qquad \text{and} \qquad  d_h^n=R_h c(t^n)-c_h^n,
\end{align*}
we require a number of auxiliary results which are stated and proved in the next section.

\subsection{Auxiliary results}

As a preliminary step, we now state estimates for some terms that will arise in the analysis of the discrete error components $e_h^n$, $d_h^n$ below. 
\begin{lemma}\label{lem:errAux1}
Let $u_h,v_h \in V_h$, then for any $\eps > 0$,
\begin{align*}
\left|\la u_h,v_h\ra_{h,\E} - \la u_h,v_h\ra_\E\right| \le C h \|u_h\|_{H^1(\E)} \|v_h\|_{L^2(\E)} \leq \frac{C}{\eps} h^2 \|u_h\|_{H^1(\E)}^2 + \eps \|v_h\|_{L^2(\E)}^2. 
\end{align*}
\end{lemma}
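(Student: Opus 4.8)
The plan is to estimate the difference between the lumped and exact $L^2$ inner products elementwise, since both products split as sums over elements $T \in T_h$. On each element $T = [x_k, x_l]$ of length $h_T$, the lumped product $\la u_h, v_h \ra_{h,T} = \int_T I_T(u_h v_h)\, dx$ applies the trapezoidal rule to the quadratic polynomial $u_h v_h$, while $\la u_h, v_h\ra_T$ is its exact integral. First I would recall that the trapezoidal rule is exact for linear functions, so the quadrature error on $T$ depends only on the second derivative of $u_h v_h$ on $T$; since $u_h, v_h$ are affine on $T$, one has $(u_h v_h)'' = 2 u_h' v_h'$ (constant), and the standard trapezoidal error formula gives $|\la u_h,v_h\ra_T - \la u_h,v_h\ra_{h,T}| \le C h_T^2 \|u_h'\|_{L^\infty(T)} \|v_h'\|_{L^\infty(T)}$. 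A slightly cleaner route: expand both products in the nodal basis on $T$ and compute the difference of the local mass matrices directly — the exact element mass matrix is $\frac{h_T}{6}\begin{pmatrix}2 & 1\\ 1 & 2\end{pmatrix}$ and the lumped one is $\frac{h_T}{2}\begin{pmatrix}1 & 0\\ 0 & 1\end{pmatrix}$, so their difference is $\frac{h_T}{6}\begin{pmatrix}-1 & 1\\ 1 & -1\end{pmatrix}$, whose bilinear form evaluated at nodal values $(u_k,u_l)$, $(v_k,v_l)$ is $-\frac{h_T}{6}(u_k - u_l)(v_k - v_l)$. Writing $u_k - u_l = -h_T u_h'|_T$ and similarly for $v_h$, this equals $-\frac{h_T^3}{6} u_h'|_T v_h'|_T$, giving the elementwise bound $\frac{h_T^3}{6}|u_h'|_T|\,|v_h'|_T|$.

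Next I would sum over elements and apply the Cauchy–Schwarz inequality. Since $h_T \le h$, the elementwise estimate yields
\[
\left|\la u_h,v_h\ra_{h,\E} - \la u_h,v_h\ra_\E\right| \le \frac{h^2}{6}\sum_{T} h_T\, |u_h'|_T|\,|v_h'|_T| = \frac{h^2}{6}\sum_T \|\dx u_h\|_{L^2(T)}\|\dx v_h\|_{L^2(T)},
\]
using that $\|\dx w_h\|_{L^2(T)} = h_T^{1/2}|w_h'|_T|$ for affine $w_h$. A discrete Cauchy–Schwarz over the elements then bounds this by $\frac{h^2}{6}\|\dx u_h\|_{L^2(\E)}\|\dx v_h\|_{L^2(\E)} \le C h^2 \|u_h\|_{H^1(\E)} \|\dx v_h\|_{L^2(\E)}$. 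To recover the first claimed inequality in the form stated, I would then invoke the inverse inequality $\|\dx v_h\|_{L^2(\E)} \le C h^{-1} \|v_h\|_{L^2(\E)}$ valid on the quasi-uniform mesh, absorbing one power of $h$ to obtain $C h \|u_h\|_{H^1(\E)} \|v_h\|_{L^2(\E)}$. Finally, the second inequality follows from the first by Young's inequality $ab \le \frac{1}{4\eps} a^2 + \eps b^2$ applied with $a = Ch\|u_h\|_{H^1(\E)}$ and $b = \|v_h\|_{L^2(\E)}$, after renaming constants.

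The only mild subtlety — not really an obstacle — is that the bound as literally written pairs $\|u_h\|_{H^1(\E)}$ with $\|v_h\|_{L^2(\E)}$ asymmetrically, which means one must decide on which factor to spend the inverse inequality; the natural and sharpest intermediate estimate is symmetric in $\dx u_h$ and $\dx v_h$, and the inverse inequality is applied to the $v_h$ factor. One should also note that the inverse inequality requires the mesh to be quasi-uniform, which is satisfied here since each $T_h(e)$ is uniform and $h = \max_e h_e$; if one wanted to avoid the inverse inequality altogether, the cleaner statement $\left|\la u_h,v_h\ra_{h,\E} - \la u_h,v_h\ra_\E\right| \le C h^2 \|\dx u_h\|_{L^2(\E)} \|\dx v_h\|_{L^2(\E)}$ could be used directly in the error analysis, but since the paper states it in the above mixed form, the inverse inequality is the expedient tool. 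Everything here is elementary and there is no genuine difficulty; the proof is a short elementwise computation followed by Cauchy–Schwarz and Young.
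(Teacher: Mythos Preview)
Your proof is correct and follows essentially the same approach as the paper: both obtain the intermediate bound $|\la u_h,v_h\ra_{h,\E} - \la u_h,v_h\ra_\E| \le C h^2 \|\dx u_h\|_{L^2(\E)}\|\dx v_h\|_{L^2(\E)}$ from the fact that $(u_h v_h)'' = 2 u_h' v_h'$ on each element, then apply the inverse inequality to the $v_h$ factor and finish with Young's inequality. The only cosmetic difference is that the paper invokes the Bramble--Hilbert lemma and scaling for the quadrature error, whereas you compute the local mass-matrix difference explicitly.
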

\begin{proof}
Let us note that the numerical integration is exact, if the product $u_h v_h$ is piecewise linear. 
By the Bramble-Hilbert lemma and scaling arguments, one can then see that
\begin{align*}
\left|\la u_h,v_h\ra_{h,\E} - \la u_h,v_h\ra_\E\right| \le C h^2 \|\dxx' (u_h v_h)\|_{L^1(\E)}.
\end{align*}
Since $u_h,p_h \in V_h \subset P_1(T_h)$, one can further compute
\begin{align*}
\|\dxx' (u_h v_h)\|_{L^1(\E)} \le 2 \|\dx u_h \dx v_h\|_{L^1(\E)} \le 2 \|\dx u_h\|_{L^2(\E)} \|\dx v_h\|_{L^2(\E)}
\end{align*}
Via an inverse inequality, the second term can be bounded 
by $\|\dx v_h\|_{L^2(\E)} \le C h^{-1} \|v_h\|_{L^2(\E)}$. 
The result then follows by combination of the last two inequalities, summation over all elements, and application of the Cauchy-Schwarz inequality.
\end{proof}

Using the properties of the $H^1$-projection $R_h$, we can derive the following bounds.
\begin{lemma}\label{lem:errAux2}
Let $w \in H^1(\E)$. Then for any $\eps > 0$,
\begin{align*}
\left|\la R_h w, v_h\ra_{h,\E} - \la w,v_h\ra_{h,\E}\right| \le C h \|w\|_{H^1(\E)} \|v_h\|_h \le \frac{C}{\eps} h^2 \|w\|_{H^1(\E)}^2  + \eps \|v_h\|_h^2. 
\end{align*}
\end{lemma}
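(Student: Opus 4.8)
The plan is to reduce the whole estimate to a statement about functions in the finite element space $V_h$, where the norm equivalence \eqref{eq:equiv} and the error bounds of Lemma~\ref{lem:h1} are available. The key observation is that the lumped product $\la\cdot,\cdot\ra_{h,\E}$ sees its arguments only through their vertex values: since $w\in H^1(\E)$ is continuous (also across the graph vertices $v\in\V_0$, by definition of $H^1(\E)$), it has a well-defined piecewise linear nodal interpolant $I_h w\in V_h$ with $(I_h w)|_T=I_T w$, and $I_h w$ coincides with $w$ at every mesh vertex $x_j$. Hence $I_T(w\,v_h)=I_T((I_h w)\,v_h)$ on each element $T$, and therefore $\la w,v_h\ra_{h,\E}=\la I_h w,v_h\ra_{h,\E}$. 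This turns the quantity to be estimated into $\la R_h w - I_h w, v_h\ra_{h,\E}$ with both factors now in $V_h$.

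Next I would apply the Cauchy--Schwarz inequality with respect to the lumped inner product and invoke the norm equivalence \eqref{eq:equiv}, obtaining $\left|\la R_h w - I_h w, v_h\ra_{h,\E}\right|\le \|R_h w - I_h w\|_h\,\|v_h\|_h \le \sqrt{3}\,\|R_h w - I_h w\|_{L^2(\E)}\,\|v_h\|_h$. It then remains to bound $\|R_h w - I_h w\|_{L^2(\E)}$, which by the triangle inequality is at most $\|w - R_h w\|_{L^2(\E)} + \|w - I_h w\|_{L^2(\E)}$. The first term is $\le C h\,\|\dx w\|_{L^2(\E)}$ by Lemma~\ref{lem:h1}; the second is the standard nodal interpolation error, which in one space dimension satisfies $\|w - I_h w\|_{L^2(T)}\le C h_T\,\|\dx w\|_{L^2(T)}$ for $w\in H^1(T)$ — the difference vanishes at the endpoints of $T$, so a Poincaré inequality on each element applies, and one also has $\|\dx(w-I_h w)\|_{L^2(T)}\le 2\|\dx w\|_{L^2(T)}$. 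Summing over the elements of $T_h(\E)$ gives $\|R_h w - I_h w\|_{L^2(\E)}\le C h\,\|\dx w\|_{L^2(\E)}\le C h\,\|w\|_{H^1(\E)}$, which is the first asserted inequality. The second follows immediately from Young's inequality applied to $C h\,\|w\|_{H^1(\E)}\,\|v_h\|_h$, after renaming the constant.

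I do not expect a genuine obstacle here: the estimate is an elementary consequence of the ``nodal'' character of $\la\cdot,\cdot\ra_{h,\E}$ induced by mass lumping, combined with the projection and interpolation bounds already established. The only point deserving a line of care is the nodal interpolation estimate for merely $H^1$ data (rather than $H^2$ as in the second part of Lemma~\ref{lem:h1}), which is valid in one dimension precisely because $H^1\hookrightarrow C$ and the interpolation error vanishes at each vertex. As an alternative route one could avoid $I_h$ altogether, insert the exact $L^2$-product, and estimate the three resulting differences via Lemma~\ref{lem:errAux1} (applied to $R_h w,v_h\in V_h$) and Lemma~\ref{lem:h1} together with $\|R_h w\|_{H^1(\E)}\le\|w\|_{H^1(\E)}$; but the argument through $I_h w$ is shorter.
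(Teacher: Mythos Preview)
Your proof is correct and follows essentially the same route as the paper's: Cauchy--Schwarz for the lumped product, the norm equivalence \eqref{eq:equiv}, and the $H^1$-projection bound of Lemma~\ref{lem:h1}, followed by Young's inequality. The paper compresses the argument into a single line,
\[
|\la R_h w - w, v_h\ra_{h,\E}| \le C\,\|R_h w - w\|_{L^2(\E)}\,\|v_h\|_h \le C' h\,\|w\|_{H^1(\E)}\,\|v_h\|_h,
\]
whereas you make the passage through the nodal interpolant $I_h w$ explicit (observing that $\la w,v_h\ra_{h,\E}=\la I_h w,v_h\ra_{h,\E}$ so that both factors lie in $V_h$ before invoking \eqref{eq:equiv}); this is precisely the justification the paper's first inequality needs, so your version is in fact a cleaner write-up of the same idea.
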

\begin{proof}
By application of Lemma~\ref{lem:h1}, the Cauchy-Schwarz inequality, and the norm equivalence estimates \eqref{eq:equiv}, 
we obtain 
\begin{align*}
|\la R_h w - w, v_h\ra_{h,\E}|
&\le C \|R_h w - w\|_{L^2(\E)} \|v_h\|_h 
 \le C' h \|w\|_{H^1(\E)} \|v_h\|_h. 
\end{align*}
The assertion of the lemma now follows via Young's inequality.
\end{proof}

As a next step, we derive an estimate for the errors introduced through mass lumping and the approximation of the time derivatives by finite differences. 
\begin{lemma}
Let $w \in L^\infty(0,T;L^2(\E)) \cap L^2(0,T;H^1(\E))$. Then for any $\eps>0$, 
\begin{align*}
&\left|\la \dtau R_h w(t^n), v_h\ra_{h,\E} - \la \dt w(t^n),v_h\ra_{\E}\right| 
 \le \eps \|v_h\|_h^2 + \eps \|\dx v_h\|_{L^2(\E)}^2 \\
&\qquad + \frac{C \tau }{\eps} \|\dtt w\|_{L^2(t^{n-1},t^n;H^1(\E)')}^2  
 + \frac{C h^2}{\eps \tau} \left( \|\dt w\|_{L^2(t^{n-1},t^n;H^1(\E))}^2 +\|\dxx' w\|_{L^2(t^{n-1},t^n;L^2(\E))}^2 \right).
\end{align*}
\end{lemma}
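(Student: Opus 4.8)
The plan is to split the error $\dtau R_h w(t^n) - \dt w(t^n)$ (tested against $v_h$ in the appropriate products) into three contributions corresponding to the three distinct sources of error: mass lumping, the $H^1$-projection $R_h$, and the finite-difference approximation of the time derivative. Concretely, I would write
\begin{align*}
\la \dtau R_h w(t^n), v_h\ra_{h,\E} - \la \dt w(t^n),v_h\ra_{\E}
&= \big(\la \dtau R_h w(t^n), v_h\ra_{h,\E} - \la \dtau R_h w(t^n), v_h\ra_{\E}\big) \\
&\quad + \big(\la \dtau R_h w(t^n) - \dtau w(t^n), v_h\ra_{\E}\big) \\
&\quad + \big(\la \dtau w(t^n) - \dt w(t^n), v_h\ra_{\E}\big)
= (I) + (II) + (III).
\end{align*}
For $(I)$, I would apply Lemma~\ref{lem:errAux1} with $u_h = \dtau R_h w(t^n) \in V_h$; this produces a term $\frac{C}{\eps} h^2 \|\dtau R_h w(t^n)\|_{H^1(\E)}^2 + \eps\|v_h\|_{L^2(\E)}^2$, and then I would bound $\|\dtau R_h w(t^n)\|_{H^1(\E)}^2$ using the $H^1$-stability of $R_h$ from Lemma~\ref{lem:h1} together with the identity $\dtau w(t^n) = \frac{1}{\tau}\int_{t^{n-1}}^{t^n} \dt w(s)\,ds$ and Jensen/Cauchy--Schwarz, giving $\|\dtau R_h w(t^n)\|_{H^1(\E)}^2 \le \frac{1}{\tau}\|\dt w\|_{L^2(t^{n-1},t^n;H^1(\E))}^2$; this yields exactly the $\frac{Ch^2}{\eps\tau}\|\dt w\|_{L^2(t^{n-1},t^n;H^1(\E))}^2$ piece of the claimed bound (the $\|v_h\|_h^2$ appearing there in place of $\|v_h\|_{L^2}^2$ is harmless by the norm equivalence \eqref{eq:equiv}).

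For $(II)$, commuting $R_h$ with the difference quotient gives $\la R_h(\dtau w(t^n)) - \dtau w(t^n), v_h\ra_{\E}$; I would bound this by $\|R_h(\dtau w(t^n)) - \dtau w(t^n)\|_{L^2(\E)}\|v_h\|_{L^2(\E)}$ and then apply the $H^2_{pw}$-estimate of Lemma~\ref{lem:h1} to get $\le Ch^2 \|\dxx'(\dtau w(t^n))\|_{L^2(\E)}\|v_h\|_{L^2(\E)}$. Again using $\dtau w(t^n) = \frac{1}{\tau}\int_{t^{n-1}}^{t^n}\dt w(s)\,ds$ and interchanging the broken second spatial derivative with the time integral, $\|\dxx'(\dtau w(t^n))\|_{L^2(\E)}^2 \le \frac{1}{\tau}\|\dxx' w\|_{L^2(t^{n-1},t^n;L^2(\E))}^2$; Young's inequality then converts this into $\frac{Ch^2}{\eps\tau}\|\dxx' w\|_{L^2(t^{n-1},t^n;L^2(\E))}^2 + \eps\|v_h\|_{L^2(\E)}^2$, matching the last term of the claim. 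For $(III)$, the finite-difference consistency error, I would write $\dtau w(t^n) - \dt w(t^n) = \frac{1}{\tau}\int_{t^{n-1}}^{t^n}(s - t^{n-1})\dtt w(s)\,ds$ (integration by parts in the Taylor-with-integral-remainder sense, interpreting the second derivative as an $H^1(\E)'$-valued function), estimate the duality pairing with $v_h$ by $\|\dtau w(t^n) - \dt w(t^n)\|_{H^1(\E)'}\|v_h\|_{H^1(\E)}$, bound $\|\dtau w(t^n) - \dt w(t^n)\|_{H^1(\E)'} \le C\tau^{1/2}\|\dtt w\|_{L^2(t^{n-1},t^n;H^1(\E)')}$ by Cauchy--Schwarz, and split $\|v_h\|_{H^1(\E)}^2 \lesssim \|v_h\|_{L^2(\E)}^2 + \|\dx v_h\|_{L^2(\E)}^2$ (absorbing the former into a $\|v_h\|_h^2$ term via \eqref{eq:equiv}), so that Young's inequality produces $\frac{C\tau}{\eps}\|\dtt w\|_{L^2(t^{n-1},t^n;H^1(\E)')}^2 + \eps\|v_h\|_h^2 + \eps\|\dx v_h\|_{L^2(\E)}^2$.

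Summing the three bounds and relabeling the (finitely many) $\eps$'s gives precisely the stated estimate. The only genuinely delicate point is the regularity bookkeeping: to make sense of $(III)$ one needs $\dtt w \in L^2(0,T;H^1(\E)')$, which is not assumed in the hypothesis of the lemma — so I expect the intended reading is that this lemma is applied only to $w = u$, for which \eqref{eq:reg1} supplies exactly $\|\dtt u\|_{L^2(0,T;H^1(\E)')} < \infty$, and the statement should be understood with this additional regularity in force (for $w = c$ one has the stronger $\dtt c \in L^2(0,T;L^2(\E))$ from \eqref{eq:reg2}, which only improves the bound). Apart from that, every step is a routine combination of the projection estimates in Lemma~\ref{lem:h1}, the lumping estimate in Lemma~\ref{lem:errAux1}, the norm equivalence \eqref{eq:equiv}, Taylor expansion with integral remainder, and Young's inequality, so I would keep the write-up terse and simply point to those results at each stage.
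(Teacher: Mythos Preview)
Your decomposition and the handling of $(I)$ and $(III)$ are correct and essentially identical to the paper's argument (the paper merges your $(I)$ and $(II)$ into a single term and treats it via Lemmas~\ref{lem:errAux1} and~\ref{lem:errAux2}, but the content is the same). However, your treatment of $(II)$ contains an error: the claimed inequality
\[
\|\dxx'(\dtau w(t^n))\|_{L^2(\E)}^2 \le \tfrac{1}{\tau}\|\dxx' w\|_{L^2(t^{n-1},t^n;L^2(\E))}^2
\]
is not valid. Writing $\dtau w(t^n) = \tfrac{1}{\tau}\int_{t^{n-1}}^{t^n}\dt w(s)\,ds$ and commuting the broken second derivative with the time integral yields $\dxx'(\dtau w(t^n)) = \tfrac{1}{\tau}\int_{t^{n-1}}^{t^n}\dxx'\dt w(s)\,ds$, so Cauchy--Schwarz produces $\tfrac{1}{\tau}\|\dxx'\dt w\|_{L^2(t^{n-1},t^n;L^2(\E))}^2$, \emph{not} $\|\dxx' w\|$. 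The regularity $\dxx'\dt w \in L^2(0,T;L^2(\E))$ is neither assumed in the lemma nor provided by \eqref{eq:reg1}--\eqref{eq:reg2}.

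The fix is simple and is exactly what the paper does: for $(II)$, use the first-order estimate $\|R_h f - f\|_{L^2(\E)} \le Ch\|\dx f\|_{L^2(\E)}$ from Lemma~\ref{lem:h1} rather than the $H^2_{pw}$ estimate. Then $(II)$ contributes $Ch\,\|\dtau w(t^n)\|_{H^1(\E)}\|v_h\|_{L^2(\E)}$, which after Young's inequality and the integral representation of $\dtau w(t^n)$ becomes $\tfrac{Ch^2}{\eps\tau}\|\dt w\|_{L^2(t^{n-1},t^n;H^1(\E))}^2 + \eps\|v_h\|_h^2$ --- the same type of contribution as $(I)$. The $\|\dxx' w\|$ term in the lemma's stated bound is then simply superfluous; the paper's own proof does not produce it either, so the statement is just a harmless overestimate.
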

\begin{proof}
We start with splitting the error by 
\begin{align*}
\la \dtau R_h &w(t^n), v_h\ra_{h,\E} - \la \dt w(t^n),v_h\ra_{\E}  \\
&= \left(\la \dtau R_h w(t^n), v_h\ra_{h,\E} - \la \dtau w(t^n), v_h\ra_{\E}\right) 
  + \la \dtau w(t^n) - \dt w(t^n), v_h\ra_{\E}
  = (i) + (ii).
\end{align*}
By Lemmas~\ref{lem:errAux1} and \ref{lem:errAux2}, we readily obtain
\begin{align*}
|(i)| 
&\le C h \|\dtau w(t^n)\|_{H^1(\E)} \|v_h\|_h 
 \le \frac{C h^2}{\eps\tau} \int_{t^{n-1}}^{t^n} \|\dt w(t)\|^2_{H^1(\E)} dt + \frac{\eps}{2} \|v_h\|_h^2.
\end{align*}
In the second step, we used the fundamental theorem of calculus the Cauchy-Schwarz,  and Young's inequality. 
The second term can be further estimated by
\begin{align*}
|(ii)|
&\le  \|\dtau w(t^n) - \dt w(t^n)\|_{H^1(\E)'} \|v_h\|_{H^1(\E)} \\
&\le \frac{C}{\eps} \tau \int_{t^{n-1}}^{t^n} \|\dtt w(t)\|^2_{H^1(\E)'} dt + \frac{\eps}{2} \|v_h\|_h^2 + \frac{\eps}{2} \|\dx v_h\|_{L^2(\E)}^2.
\end{align*}
Here we used Taylor expansion and the Cauchy-Schwarz inequality for the first term, the norm equivalence \eqref{eq:equiv} for the second, and applied Young's inequality to split the product. A combination of the two estimates for $(i)$ and $(ii)$ yields the assertion.
\end{proof}

As a last step in our preliminary considerations, we now derive a bound for the error introduced through the upwinding strategy.
\begin{lemma}
Let $c_h^{n-1} \in V_h$ be given, and let $\widehat \pi_h^{n-1}$ denote the corresponding upwind operator as defined in \eqref{eq:upw}. Then for all $\eps>0$, 
\begin{align*}
&\left|\la \chi u(t^n)  \dx c(t^n) , \dx v_h \ra_\E - \la \chi \widehat \pi_h^{n-1} u_h^n \dx c_h^{n-1} , \dx v_h\ra_\E\right| \\
& \quad \le \frac{C}{\eps} \left(\tau \|\dt c\|^2_{L^2(t^{n-1},t^n;H^1(\E))} 
          + \frac{h^2}{\tau} \|\dxx' c\|^2_{L^2(t^{n-1},t^n;L^2(\E))}+ \tau \|\dt u\|^2_{L^2(t^{n-1},t^n;H^1(\E))} \right.\\
& \quad \quad \left. + \frac{h^3}{\tau} \|\dxx' u\|^2_{L^2(t^{n-1},t^n;L^2(\E))} + \|\dx d_h^{n-1}\|^2_{L^2(\E)} + \|e_h^n\|^2_h\right) + \eps(\|\dx e_h^n\|^2_{L^2(\E)} + \|\dx v_h\|^2_{L^2(\E)}).
\end{align*}
\end{lemma}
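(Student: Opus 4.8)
The strategy is to decompose the difference of the two convective terms into several pieces, each isolating one source of error, and then estimate each piece separately using the regularity of $(u,c)$, the approximation properties from Section~\ref{sec:discretization}, and Young's inequality to absorb the critical terms into $\eps(\|\dx e_h^n\|^2_{L^2(\E)} + \|\dx v_h\|^2_{L^2(\E)})$. First I would insert and subtract intermediate quantities so as to write
\begin{align*}
\chi u(t^n)\dx c(t^n) - \chi\widehat\pi_h^{n-1}u_h^n\dx c_h^{n-1}
&= \chi(u(t^n)-u(t^{n-1}))\dx c(t^n) + \chi u(t^{n-1})(\dx c(t^n)-\dx c(t^{n-1})) \\
&\quad + \chi(u(t^{n-1}) - \widehat\pi_h^{n-1}u(t^{n-1}))\dx c(t^{n-1})
 + \chi\widehat\pi_h^{n-1}(u(t^{n-1})-u_h^{n-1})\dx c(t^{n-1}) \\
&\quad + \chi\widehat\pi_h^{n-1}u_h^{n-1}(\dx c(t^{n-1})-\dx c_h^{n-1})
 + \chi\widehat\pi_h^{n-1}(u_h^{n-1}-u_h^n)\dx c_h^{n-1},
\end{align*}
with a further split of the fourth and fifth terms via the $H^1$-projection $R_h$ to introduce the discrete errors $e_h^{n-1}$ and $d_h^{n-1}$. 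Pairing each term with $\dx v_h$ in $\la\cdot,\cdot\ra_\E$, the first two time-difference terms give $\tau\|\dt u\|^2$ and $\tau\|\dt c\|^2$ contributions after writing differences as integrals of $\dt$ and applying Cauchy--Schwarz in time; here one uses $\|\dx c(t^n)\|_{L^\infty}$-type bounds controlled through \eqref{eq:reg2} and the boundedness of $\chi$.

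The consistency terms of the upwind operator are handled by \eqref{eq:upwind2}: applied with $p=2$ it yields $\|\widehat\pi_h^{n-1}w - w\|_{L^\infty(\E)} \le Ch^{1/2}\|\dx w\|_{L^2(\E)}$, which for $w = u(t^{n-1})$ (bounded in $H^2_{pw}$ by \eqref{eq:reg1}) gives the $h^3/\tau\cdot\|\dxx' u\|^2$ term after squaring and the factor $h^{1/2}\cdot h^{1/2} = h$ is split as $h^3 \cdot \tau^{-1}$ upon pairing with $\dx v_h$ and using Young's inequality together with a sum-to-integral estimate over $[t^{n-1},t^n]$. The term involving $\dx c(t^{n-1}) - \dx c_h^{n-1} = \dx(c(t^{n-1}) - R_h c(t^{n-1})) - \dx d_h^{n-1}$ contributes $\|\dx d_h^{n-1}\|^2_{L^2(\E)}$ directly and, via Lemma~\ref{lem:h1}, an $h^2/\tau\cdot\|\dxx' c\|^2$ piece; here $\|\widehat\pi_h^{n-1}u_h^{n-1}\|_{L^\infty}$ is controlled by the uniform discrete $L^\infty$-bound coming from \eqref{eq:upwBounded} combined with the a-priori bounds of Theorem~\ref{thm:disc} (through an inverse inequality, $\|u_h^{n-1}\|_{L^\infty} \le Ch^{-1/2}\|u_h^{n-1}\|_{L^2} \le C$ after noting the $H^1$-bound on $u_h$, or more carefully $\|u_h^{n-1}\|_{H^{1/2+\eps}}$ interpolated between the uniform $L^2$ and $H^1$ bounds). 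The discrete-error term $\chi\widehat\pi_h^{n-1}(u(t^{n-1}) - u_h^{n-1})\dx c(t^{n-1})$ splits through $R_h$ into an approximation part, estimated by Lemma~\ref{lem:h1} and \eqref{eq:upwind2} as an $O(h)$ quantity that folds into the $h^2$-type bounds, plus the genuinely discrete part $\widehat\pi_h^{n-1}e_h^{n-1}$ — but inspecting the claimed estimate, the index there is $n$, not $n-1$, so one more time-difference $e_h^{n-1} = e_h^n + (e_h^{n-1}-e_h^n)$ would be needed; the difference $u_h^{n-1}-u_h^n = -\tau\dtau u_h^n$ is absorbed by testing the discrete equation \eqref{eq:disc1}, which after mass-lumping error control (Lemma~\ref{lem:errAux1}) and the uniform bounds produces the remaining $\tau$-factors, or more likely the $e_h^n$ term in the statement already reflects bundling the last convective term $\chi\widehat\pi_h^{n-1}u_h^{n-1}\dx c_h^{n-1} - \chi\widehat\pi_h^{n-1}u_h^n\dx c_h^{n-1} = \chi\widehat\pi_h^{n-1}(u_h^{n-1}-u_h^n)\dx c_h^{n-1}$ together with the discrete error after a shift. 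For the $\|e_h^n\|_h^2$ contribution one uses the inverse inequality $\|\widehat\pi_h^{n-1}e_h^n\|_{L^\infty}\le\|e_h^n\|_{L^\infty}\le Ch^{-1/2}\|e_h^n\|_{L^2}$ paired against $\|\dx c_h^{n-1}\|_{L^2}\le C$ (uniform bound) and $\|\dx v_h\|_{L^2}$, with Young's inequality splitting off $\eps\|\dx v_h\|^2$ and leaving $C\eps^{-1}h^{-1}\|e_h^n\|^2$ — not quite matching, so instead one keeps $\|\widehat\pi_h^{n-1}e_h^n\|_{L^2}\le C\|e_h^n\|_{L^\infty}$? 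Let me reconsider: more plausibly the term is paired so that one factor of $\dx e_h^n$ appears explicitly (the $\dx c_h^{n-1}$ is replaced by $\dx d_h^{n-1} + \dx R_h c(t^{n-1})$, the latter bounded in $L^\infty$ by a uniform constant times... no, $H^1$), giving $\la\chi\widehat\pi_h^{n-1}e_h^n \cdot (\text{bounded}), \dx v_h\ra$ with $\|\widehat\pi_h^{n-1}e_h^n\|_{L^2}\le\|e_h^n\|_{L^\infty}$ still problematic; the clean route is $\la\chi(\text{bounded function})\,\dx v_h, \widehat\pi_h^{n-1}e_h^n\ra$ and then $\|\widehat\pi_h^{n-1}e_h^n\|_{L^\infty}$ is what multiplies, but one can also use that $\widehat\pi_h^{n-1}$ is an $L^2$-contraction up to constants on $V_h$ by norm equivalence, yielding $\|\widehat\pi_h^{n-1}e_h^n\|_{L^2}\le C\|e_h^n\|_{L^2}\le C\|e_h^n\|_h$, and with the bounded coefficient $\chi\dx R_h c(t^{n-1})\in L^2$ having uniformly bounded $L^2$-norm... no, that needs $L^\infty$.

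The main obstacle, therefore, is the careful bookkeeping of which norm ($L^\infty$, $L^2$, $H^1$, or the discrete $\|\cdot\|_h$) is available for each of the six-or-more pieces, and in particular ensuring that the piecewise-constant upwind quantity $\widehat\pi_h^{n-1}u_h^n$ is controlled without losing powers of $h$: one uses \eqref{eq:upwBounded} in the form $\|\widehat\pi_h^{n-1}u_h^n\|_{L^\infty}\le C\|u_h^n\|_{H^{1/2+\eps}}$ and interpolates the latter between the uniform $L^2$ and $H^1$ bounds from Theorem~\ref{thm:disc}, accepting a mild $\eps$-dependence. Once each term carries the right norm, every estimate closes by: (a) writing consecutive-time differences as $\int_{t^{n-1}}^{t^n}\dt(\cdot)$ and applying Cauchy--Schwarz to gain the factor $\tau$; (b) invoking \eqref{eq:upwind2} and Lemma~\ref{lem:h1} for the interpolation/projection consistency, gaining powers of $h$; (c) using inverse inequalities on $V_h$ to trade $L^\infty$ for $L^2$ at the cost of $h^{-1/2}$, which is exactly compensated by the $h$-powers from (b) so that the surviving discrete-error terms appear with $O(1)$ constants as $\|\dx d_h^{n-1}\|^2$ and $\|e_h^n\|_h^2$; and (d) applying Young's inequality throughout with parameter $\eps$ to park the two terms $\|\dx e_h^n\|^2$ and $\|\dx v_h\|^2$ on the favorable side. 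Summing the resulting bounds over the six pieces and relabeling constants gives precisely the asserted inequality.
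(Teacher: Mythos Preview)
Your proposal has a genuine gap in the handling of the discrete-error term, and your decomposition is more convoluted than necessary, which is what creates the index mismatch you struggle with.

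\medskip

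\textbf{The decomposition.} You insert $u(t^{n-1})$ and $u_h^{n-1}$ as intermediate quantities; this forces you to confront an extra term $\widehat\pi_h^{n-1}(u_h^{n-1}-u_h^n)\dx c_h^{n-1}$ and to shift $e_h^{n-1}$ to $e_h^n$, neither of which you resolve. The paper avoids all of this by keeping $u(t^n)$ fixed and telescoping through $R_h u(t^n)$ directly:
\begin{align*}
u(t^n)\dx c(t^n) - \widehat\pi_h^{n-1}u_h^n\dx c_h^{n-1}
&= u(t^n)\big[\dx c(t^n)-\dx c(t^{n-1})\big]
 + u(t^n)\big[\dx c(t^{n-1})-\dx R_h c(t^{n-1})\big] \\
&\quad + u(t^n)\,\dx d_h^{n-1}
 + \big[u(t^n)-R_h u(t^n)\big]\dx c_h^{n-1} \\
&\quad + \big[R_h u(t^n)-\widehat\pi_h^{n-1}R_h u(t^n)\big]\dx c_h^{n-1}
 + \widehat\pi_h^{n-1}e_h^n\,\dx c_h^{n-1}.
\end{align*}
The index $n$ on $e_h^n$ then appears naturally, and no discrete time difference of $u_h$ enters.

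\medskip

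\textbf{The uniform bounds.} You try to control $\|\widehat\pi_h^{n-1}u_h^{n-1}\|_{L^\infty}$ by inverse inequalities or $H^{1/2+\eps}$-interpolation from Theorem~\ref{thm:disc}; neither gives a uniform-in-$n$ constant (the $H^1$-bound there is only $L^2$-in-time). The paper sidesteps this entirely: the only $L^\infty$-bound needed is for the \emph{continuous} function $u(t^n)$, which is uniform by the regularity \eqref{eq:reg1} and the embedding $H^1(\E)\hookrightarrow L^\infty(\E)$. On the discrete side one only needs $\|\dx c_h^{n-1}\|_{L^2(\E)}\le C$, which Theorem~\ref{thm:disc} does give uniformly.

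\medskip

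\textbf{The critical term.} For the last piece $\widehat\pi_h^{n-1}e_h^n\,\dx c_h^{n-1}$ you oscillate between inverse inequalities (which, as you note, lose $h^{-1/2}$ and ``don't quite match'') and an $L^2$-contraction argument (which fails because the remaining factor $\dx c_h^{n-1}$ is only in $L^2$, not $L^\infty$). The missing tool is the Gagliardo--Nirenberg interpolation inequality \eqref{interpolationL2}:
\[
\|e_h^n\|_{L^\infty(\E)} \le \eps\|\dx e_h^n\|_{L^2(\E)} + C_\eps\|e_h^n\|_{L^2(\E)}.
\]
Combined with $\|\widehat\pi_h^{n-1}e_h^n\|_{L^\infty}\le\|e_h^n\|_{L^\infty}$ from \eqref{eq:upwBounded} and the uniform bound on $\|\dx c_h^{n-1}\|_{L^2}$, this produces exactly the claimed $\eps\|\dx e_h^n\|_{L^2}^2 + C\eps^{-1}\|e_h^n\|_h^2$ with no loss of powers of $h$.
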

\noindent 
Recall that $e_h^n=R_hu(t^n)-u_h^n$ and $d_h^n=R_hc(t^n)-c_h^n$ are the discrete error contributions.
\begin{proof}
By application of the Cauchy-Schwarz and Young's inequality, we obtain 
\begin{align*}
\la \chi u(t^n) &\dx c(t^n), \dx v_h \ra_\E - \la \chi \widehat \pi_h^{n-1} u_h^n \dx c_h^{n-1} , \dx v_h\ra_\E \\
&\le \frac{C}{\eps} \| u(t^n) \dx c(t^n) - \widehat \pi_h^{n-1} u_h^n \dx c_h^{n-1} \|_{L^2(\E)}^2 + \eps \|\dx v_h\|_{L^2(\E)}^2.
\end{align*}
Using triangle inequalities, the first term can be further estimated by
\begin{align*}
\| u(t^n) &\dx c(t^n) - \widehat \pi_h^{n-1} u_h^n \dx c^{n-1} \|_{L^2(\E)} \\
&\le \|u(t^n) [\dx c(t^n) - \dx c(t^{n-1})] \|_{L^2(\E)} 
   + \|u(t^n) [\dx c(t^{n-1}) - \dx R_h c(t^{n-1})] \|_{L^2(\E)} \\
&  \qquad + \|u(t^n) [\dx R_h c(t^{n-1}) - \dx c_h^{n-1}] \|_{L^2(\E)} 
   + \|[u(t^n) - R_h u(t^n)] \dx c_h^{n-1} \|_{L^2(\E)}  \\
&  \qquad  + \|[R_h u(t^n) - \widehat\pi_h^{n-1} R_h u(t^n)] \dx c_h^{n-1} \|_{L^2(\E)} 
   + \|\widehat \pi_h^{n-1} [R_h u(t^n) - u_h^n] \dx c_h^{n-1} \|_{L^2(\E)} \\
&= (i) + (ii) + (iii) + (iv) + (v) + (vi).
\end{align*}
Before turning to the estimation of the individual terms, let us make a preliminary observation. 
From the embedding of $H^1(\E)$ in $L^\infty(\E)$, the bounds for $\|u(t)\|_{H^1(\E)}$ in \eqref{eq:reg1}, and the estimates of Theorem~\ref{thm:disc}, we obtain
\begin{align} \label{eq:ubound}
\|u(t)\|_{L^\infty(\E)} \le C \qquad \text{and} \qquad \|\dx c_h^n\|_{L^2(\E)} \le C,
\end{align}
and these bounds hold uniformly for all $0 \le t\le T, n\ge 0 $. 
With the aid of H\"older's inequality, Taylor estimates, and the Cauchy-Schwarz inequality, we can then estimate
the first term in the above error expansion by
\begin{align*}
(i) &\le \|u(t^n)\|_{L^\infty(\E)}  \|\dx c(t^n) - \dx c(t^{n-1})\|_{L^2(\E)} 
     \le  C \tau^{1/2} \|\dt c\|_{L^2(t^{n-1},t^n;H^1(\E))}.
\end{align*}
For the second term, we introduce temporal averages $\overline c^{n} = \frac{1}{\tau} \int_{t^{n-1}}^{t^n} c(t) dt$ for the function $c$. Then by the H\"older and triangle inequalities, and using the bound \eqref{eq:ubound}, we get
\begin{align*}
(ii) &\le \|u(t^n)\|_{L^\infty(\E)} \|\dx c(t^{n-1}) - \dx R_h c(t^{n-1})\|_{L^2(\E)}  
      \le C \left(\|\dx c(t^{n-1}) - \dx \overline c^{n}\|_{L^2(\E)} \right.\\
     & \qquad \qquad \qquad \qquad \qquad 
       \left. + \|\dx \overline c^{n} - \dx R_h \overline c^n\|_{L^2(\E)} 
        + \|\dx R_h \overline c^{n} - \dx R_h  c(t^{n-1})\|_{L^2(\E)} \right)\\
     &\le C (\tau \|\dt c\|_{L^2(t^{n-1},t^n;H^1(\E))}^2 + h^2/\tau \|\dxx' c\|_{L^2(t^{n-1},t^n;L^2(\E))}^2)^{1/2}. 
\end{align*}
For the third term in the above estimate, we get 
\begin{align*}
(iii) \le \|u(t^n)\|_{L^{\infty}(\E)} \|\dx d_h^{n-1}\|_{L^2(\E)} \le C \|\dx d_h^{n-1}\|_{L^2(\E)}.
\end{align*}
Using H\"older's inequality, the uniform bounds for $\|\dx c_h^n\|_{L^2(\E)}$ provided by Theorem~\ref{thm:disc}, 
and similar reasoning as in the estimate of the term $(ii)$, we obtain
\begin{align*}
(iv) &\le  \|u(t^n) - R_h u(t^n)\|_{L^\infty(\E)} \|\dx c_h^{n-1}\|_{L^2(\E)} \\
     &\le C (\|u(t^n) - \overline u^{n}\|_{L^\infty(\E)} + \|\overline u^{n} - R_h \overline u^{n}\|_{L^\infty(\E)} + \|R_h \overline u^{n} - R_h u(t^n)\|_{L^\infty(\E)}) \\
     &\le C (\tau \|\dt u\|^2_{L^2(t^{n-1},t^n;H^1(\E))} + h^3/\tau \|\dxx' u\|^2_{L^2(t^{n-1},t^n;L^2(\E))})^{1/2}.
\end{align*}
In the last step, we employed Taylor estimates, the Cauchy-Schwarz inequality, and the properties of the $H^1$-projection. By the estimate \eqref{eq:upwind2} for the upwind projection and similar arguments as before, 
the fifth term can be estimated by 
\begin{align*}
(v) &\le  C (\tau \|\dt u\|^2_{L^2(t^{n-1},t^n;H^1(\E))} + h^2/\tau \|\dx u\|^2_{L^2(t^{n-1},t^n;H^1_{pw}(\E))})^{1/2}.
\end{align*}
Using the interpolation inequality \eqref{interpolationL2} and the uniform bounds \eqref{eq:ubound} 
for the discrete solution, the last term can finally be bounded by
\begin{align*}
(vi) &\le \|e_h^n\|_{L^\infty(\E)} \|\dx c_h^{n-1}\|_{L^2(\E)}  
      \le C(\eps \|\dx e_h^n\|^2_{L^2(\E)} + C'/\eps \|e_h^n\|_h^2)^{1/2}. 
\end{align*}
The assertion of the lemma now follows by squaring the estimates of the terms (i)--(vi), and 
combination with the first inequality of the proof.
\end{proof}

\subsection{Estimates for the discrete error}

We are now in the position to derive the following bounds for the discrete error contributions. 
\begin{lemma} \label{lem:6.8}
Let the assumptions of Theorem~\ref{thm:error} hold. 
Then the discrete error components $e_h^n = R_h u(t^n) - u_h^n$ and $d_h^n = R_h c(t^n) - c_h^n$ satisfy the bounds
\begin{align*}
\|e_h^n\|^2_h + \sum_{k=1}^n \|\dx e_h^n\|^2_{L^2(\E)} \le C(\tau^2 + h^2), \\
\|d_h^n\|^2_h + \sum_{k=1}^n \|\dx d_h^n\|^2_{L^2(\E)} \le C(\tau^2 + h^2).
\end{align*}
The constants only depend on the parameters of the problem, the geometry of the graph,
and the norms of the solution components appearing in \eqref{eq:reg1}--\eqref{eq:reg2}.
\end{lemma}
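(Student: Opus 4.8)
The plan is to follow the classical route for the error analysis of parabolic problems \cite{Thomee06}: derive discrete equations for the error contributions $e_h^n$, $d_h^n$, test them with suitable discrete functions, insert the auxiliary estimates of this section, and close the argument by a discrete Gronwall inequality. Subtracting \eqref{eq:disc2} from the variational identity \eqref{eq:var2} taken at $t=t^n$ and inserting $c_h^n = R_h c(t^n) - d_h^n$ yields, for all $q_h \in V_h$,
\begin{align*}
\la \dtau d_h^n, q_h\ra_{h,\E} + \la \beta \dx d_h^n, \dx q_h\ra_\E + \la \gamma d_h^n, q_h\ra_{h,\E} = \la r_c^n, q_h\ra,
\end{align*}
where $r_c^n$ gathers the consistency terms: the discrepancy between $\la \dtau R_h c(t^n),\cdot\ra_{h,\E}$ and $\la \dt c(t^n),\cdot\ra_\E$, the mass-lumping error in the term $\gamma c$, the difference between $\la\beta\dx R_hc(t^n),\dx\cdot\ra_\E$ and $\la\beta\dx c(t^n),\dx\cdot\ra_\E$ caused by the variable coefficient, and the source-term error $\la\delta u(t^n),\cdot\ra_\E - \la\delta u_h^n,\cdot\ra_{h,\E}$, the latter introducing a coupling to $e_h^n$ via $u(t^n) - u_h^n = (u(t^n) - R_h u(t^n)) + e_h^n$. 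In the same manner, subtracting \eqref{eq:disc1} from \eqref{eq:var1} gives, for all $v_h \in V_h$,
\begin{align*}
\la \dtau e_h^n, v_h\ra_{h,\E} + \la \alpha \dx e_h^n, \dx v_h\ra_\E = \la r_u^n, v_h\ra,
\end{align*}
with $r_u^n$ containing the analogous linear consistency terms together with the nonlinear convective term $\la\chi u(t^n)\dx c(t^n) - \chi\widehat\pi_h^{n-1}u_h^n\dx c_h^{n-1},\dx v_h\ra_\E$.

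Next I would test the first equation with $q_h = d_h^n$ and the second with $v_h = e_h^n$, and apply \eqref{eq:identity} so that the time-difference terms produce $\tfrac12\dtau\|d_h^n\|_h^2$ and $\tfrac12\dtau\|e_h^n\|_h^2$, while the diffusion terms yield the coercive contributions $\underline\beta\|\dx d_h^n\|_{L^2(\E)}^2$ and $\underline\alpha\|\dx e_h^n\|_{L^2(\E)}^2$. Each term in $r_c^n$ and $r_u^n$ is then bounded by the auxiliary lemmas of this section: Lemma~\ref{lem:errAux1} for the lumping errors, Lemma~\ref{lem:errAux2} for the interplay of $R_h$ with the lumped products, the lemma bounding $\dtau R_h w - \dt w$ for the temporal consistency, the upwind lemma for the convective term, and Lemma~\ref{lem:h1} for the remaining approximation-error factors. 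Choosing $\eps$ small enough, the terms $\eps\|\dx d_h^n\|_{L^2(\E)}^2$ and $\eps\|\dx e_h^n\|_{L^2(\E)}^2$ produced on the right are absorbed into the left-hand sides. Multiplying by $\tau$, summing over $k=1,\dots,n$, and using the telescoping sum $\sum_{k=1}^n \tau\,\dtau\|d_h^k\|_h^2 = \|d_h^n\|_h^2 - \|d_h^0\|_h^2$ (and likewise for $e_h$) leads to an inequality for
\begin{align*}
\Phi^n := \|e_h^n\|_h^2 + \|d_h^n\|_h^2 + \tau\sum_{k=1}^n \bigl(\underline\alpha\|\dx e_h^k\|_{L^2(\E)}^2 + \underline\beta\|\dx d_h^k\|_{L^2(\E)}^2\bigr).
\end{align*}

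On the right-hand side, the consistency contributions of the type $\tau\|\dt w\|_{L^2(t^{k-1},t^k;\cdot)}^2$, $\tau\|\dtt w\|_{L^2(t^{k-1},t^k;\cdot)}^2$ and $(h^2/\tau)\|\dxx' w\|_{L^2(t^{k-1},t^k;\cdot)}^2$, carrying the extra factor $\tau$ from the summation, telescope into $\tau^2\|\dt w\|_{L^2(0,T;\cdot)}^2$, $\tau^2\|\dtt w\|_{L^2(0,T;\cdot)}^2$ and $h^2\|\dxx' w\|_{L^2(0,T;\cdot)}^2$ in the respective Bochner norms, which are finite by \eqref{eq:reg1}--\eqref{eq:reg2}; together with the bound $\tau\sum_k\|\dx u_h^k\|_{L^2(\E)}^2 \le P(T)$ from Theorem~\ref{thm:disc}, also the lumping contributions in the source term are of order $h^2$. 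The initial error $\Phi^0 = \|e_h^0\|_h^2 + \|d_h^0\|_h^2$ is of order $h^2$ by Lemma~\ref{lem:h1} and \eqref{eq:clement1}, since $u_0,c_0 \in H^1(\E)$ by \eqref{eq:reg1}--\eqref{eq:reg2}. What remains are terms of the form $C\tau\sum_{k\le n}\bigl(\|e_h^k\|_h^2 + \|\dx d_h^{k-1}\|_{L^2(\E)}^2\bigr)$, and a discrete Gronwall inequality applied to $\Phi^n$ then yields $\Phi^n \le C(\tau^2 + h^2)$, which is the claimed estimate.

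The main obstacle is the mutual coupling of the two error components: the $c$-error equation contains $e_h^n$ through its source term, while the $u$-error equation contains $\dx d_h^{n-1}$ through the upwind consistency estimate, so neither bound can be closed in isolation. The remedy is exactly what is indicated above, namely to add the two tested estimates and run the Gronwall argument on the combined functional $\Phi^n$; the shift by one time level in $\|\dx d_h^{n-1}\|_{L^2(\E)}^2$ is harmless because that term already sits under the time-sum defining $\Phi^{n-1}$. A secondary point is that the $u$-equation provides coercivity only for $\|\dx e_h^n\|_{L^2(\E)}$ and not for $\|e_h^n\|_h$, so the zeroth-order part of the $u$-error must be carried through the Gronwall step rather than absorbed; this is precisely the role of the term $\|e_h^n\|_h^2$ on the right-hand side of the upwind lemma, and the discrete Gronwall inequality accommodates it at the cost of at most a mild restriction on the step size.
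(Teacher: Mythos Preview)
Your overall strategy---derive error equations, test with $e_h^n$ and $d_h^n$, apply the auxiliary lemmas, sum in time---matches the paper. The difference, and the gap, is in the closing step. You propose to add the two tested inequalities and run a single Gronwall argument on the combined functional $\Phi^n$. But after summation your right-hand side contains $C\tau\sum_{k\le n}\|\dx d_h^{k-1}\|_{L^2(\E)}^2$ with a constant $C$ coming from the upwind lemma (namely $C/\eps$ after $\eps=\underline\alpha/2$ has been fixed), which you have no freedom to make small. This term cannot be absorbed into the dissipation $\underline\beta\,\tau\sum_k\|\dx d_h^k\|^2$ on the left because $C$ need not be below $\underline\beta$, and it is not of the form admitted by the discrete Gronwall lemma~\ref{lem:discrete_gronwall}, which only allows the zeroth-order quantities $a_k$ on the right, not the dissipation terms $b_k$. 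Your remark that the term ``already sits under the time-sum defining $\Phi^{n-1}$'' is not enough: it yields $C\tau\sum_k\|\dx d_h^{k-1}\|^2 \le (C/\underline\beta)\,\Phi^{n-1}$, but the factor $C/\underline\beta$ is $O(1)$, not $O(\tau)$, so iterating gives a geometric blow-up rather than a Gronwall bound.

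The paper circumvents this by a \emph{sequential} Gronwall: it first closes the $d$-estimate \eqref{eq:aux8} alone (whose right-hand side contains only $\|d_h^k\|_h^2$ and $\|e_h^k\|_h^2$, no gradient terms), obtaining $\tau\sum_k\|\dx d_h^k\|^2 \le C'(\tau\sum_k\|e_h^k\|_h^2 + \tau^2+h^2)$, and only then inserts this into the $e$-estimate \eqref{eq:aux7}, reducing it to standard Gronwall form in $\|e_h^k\|_h^2$ alone. An alternative fix in your spirit would be to multiply the $d$-inequality by a weight $\lambda > C/\underline\beta$ before adding, so that $\lambda\underline\beta$ dominates $C$ and absorption becomes possible; but this weighting must be made explicit.
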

\begin{proof}
From the variational characterization of $u$ and $u_h^n$, one can see that
\begin{align*}
\la \dtau e_h^n,& v_h\ra_{h,\E} + \la \alpha \dx e_h^n,\dx v_h\ra_\E \\
    &= \la \dtau R_h u(t^n), v_h \ra_{h,\E} - \la \dt u(t^n), v_h \ra_\E 
     + \la \dx R_h u(t^n) - \dx u(t^n), \dx v_h\ra_\E    \\
    &\qquad \qquad \qquad + \la \chi [u(t^n) \dx c(t^n)  - \widehat\pi_h^{n-1} u_h^n\dx c_h^{n-1}] , \dx v_h \ra_\E.
\end{align*}
As in the proof of the preceding lemma, we have for all $\eps>0$,
\begin{align*}
|\la \dx R_h u(t^n) &- \dx u(t^n), \dx v_h\ra_\E| 
\le  \eps \|\dx v_h\|^2_{L^2(\E)} \\
&+ \frac{C}{\eps} \left(\tau \|\dt u\|_{L^2(t^{n-1},t^n;H^1(\E))}^2 
+ h^2/\tau \|\dxx' u\|_{L^2(t^{n-1},t^n;L^2(\E))}^2\right). 
\end{align*}
The remaining terms on the right hand side can be estimated by the previous lemmas. 
Testing with $v_h = e_h^n$ and using some elementary computations, one thus obtains 
\begin{align*}
&\frac{1}{2}\dtau \|e_h^n\|_h^2 + \underline \alpha \|\dx e_h^n\|^2_{L^2(\E)} 
\le \eps  \|\dx e_h^n\|^2_{L^2(\E)} \\
&  + \frac{C}{\eps} \left(\tau \|\dt c\|^2_{L^2(t^{n-1},t^n;H^1(\E))} 
+ \frac{h^2}{\tau} \|\dxx' c\|^2_{L^2(t^{n-1},t^n;L^2(\E))}
+ \tau \|\dtt u\|^2_{L^2(t^{n-1},t^n;H^1(\E)')} \right.\\
& \quad \left. + (\tau +\frac{h^2}{\tau}) \|\dt u\|^2_{L^2(t^{n-1},t^n;H^1(\E))}
+ \frac{h^2}{\tau} \|\dxx' u\|^2_{L^2(t^{n-1},t^n;L^2(\E))} + \|\dx d_h^{n-1}\|^2_{L^2(\E)} + \|e_h^n\|^2_h \right).
\end{align*}
Choosing $\eps = \underline \alpha/2$ allows to absorb the first term on the right hand side by the left hand side
of the inequality. Multiplication by $2\tau$, summation over $n$, and using the bounds \eqref{eq:reg1}--\eqref{eq:reg2} further yields
\begin{align}\label{eq:aux7}
\|e_h^n\|_h^2 + \sum_{k=1}^n \tau  \|\dx e_h^n\|^2_{L^2(\E)} 
\le C \left(\sum_{k=1}^n \tau \|\dx d_h^{n-1}\|^2_{L^2(\E)} + \sum_{k=1}^n \tau \|e_h^{n}\|^2_h + \tau^2 + h^2 \right).
\end{align}
From the variational equations characterizing $c$ and $c_h^n$, we obtain that 
\begin{align*}
\la \dtau d_h^n, q_h\ra_{h,\E} 
   &+ \la \beta \dx d_h^n, \dx q_h\ra_{\E} + \la \gamma d_h^n, q_h\ra_{h,\E} =\la \delta e_h^n, q_h\ra_{h,\E} \\
   &+ \big(\la \dtau R_h c(t^n), q_h\ra_{h,\E} - \la \dt c(t^n), q_h\ra_\E \big)
    + \big( \la \beta (\dx R_h c(t^n) - \dx c(t^n)), \dx v_h \ra_\E \big)\\
   &+ \big(\la \gamma R_h c(t^n), q_h\ra_{h,\E} - \la \gamma c(t^n), q_h\ra_\E \big)
    + \big( \la \delta u(t^n), q_h\ra_\E - \la \delta R_h u(t^n), q_h\ra_{h,\E} \big).  
\end{align*}
As before, we denote by $\overline c^n = \tfrac{1}{\tau} \int_{t^{n-1}}^{t^n} c(t) dt$ the temporal averages of the function $c$ on the subinterval $[t^{n-1},t^n)$. We then have for all $\eps>0$, 
\begin{align*}
|\la \gamma &R_hc(t^n), q_h \ra_{h,\E} - \la \gamma c(t^n),q_h \ra_{\E} | \le | \la \gamma R_h(c(t^n)- \overline c^n),q_h \ra_{h,\E}| \\
&\qquad+ | \la \gamma R_h \overline c^n, q_h \ra_{h,\E} - \la \gamma R_h \overline c^n, q_h \ra_{\E} | 
+ |\la \gamma (R_h \overline c^n - \overline c^n), q_h \ra_{\E}| 
+ |\la \gamma(\overline c^n - c(t^n)), q_h \ra_{\E}| \\
&\le \frac{C}{\eps}\left(\tau \| \dt c\|^2_{L^2(t^{n-1},t^n;L^2(\E))} + \frac{h^2}{\tau} \|\dx c\|^2_{L^2(t^{n-1},t^n;L^2(\E))}\right)
 + \eps \|q_h\|_h^2.
\end{align*}
Here, we have used the equivalence of the norms \eqref{eq:equiv} for the first term, 
Lemma~\ref{lem:errAux1} for the second term, as well as  Taylor estimates and the properties of the $H^1$-projector.
In the same way, we obtain for all $\eps >0$,
\begin{align*}
|\la \delta R_h &u(t^n), q_h \ra_{h,\E} - \la \delta u(t^n),q_h \ra_{\E} | \\
& \le \frac{C}{\eps} \left(\tau \| \dt u\|^2_{L^2(t^{n-1},t^n;L^2(\E))} + \frac{h^2}{\tau} \|\dx u\|^2_{L^2(t^{n-1},t^n;L^2(\E))} \right) 
+ \eps \|q_h\|_h^2.
\end{align*}
The remaining terms on the right hand side can now be estimated by the results of the previous section.
Choosing $q_h = d_h^n$ and some elementary computations, we thus arrive at 
\begin{align*}
&\frac{1}{2} \dtau \|d_h^n\|^2_h + \underline \beta \|\dx d_h^n\|^2_{L^2(\E)} \le \eps \|\dx d_h^n\|^2_{L^2(\E)} 
  + \frac{C}{\eps} \|d_h^n\|^2_h + \|e_h^n\|^2_h \\
& \quad + \frac{C}{\eps} \left(\tau \|\dtt c\|^2_{L^2(t^{n-1},t^n;H^1(\E)')} + (\tau+\frac{h^2}{\tau}) \|\dt c\|^2_{L^2(t^{n-1},t^n;H^1(\E))}   + \tau \| \dt u\|^2_{L^2(t^{n-1},t^n;L^2(\E))} \right.\\
& \left.  \qquad \qquad \quad 
  + \frac{h^2}{\tau} \left( \| \dxx' c\|^2_{L^2(t^{n-1},t^n;L^2(\E))}
  +  \|\dx c\|^2_{L^2(t^{n-1},t^n;L^2(\E))} + \|\dx u\|^2_{L^2(t^{n-1},t^n;L^2(\E))}\right)  \right).
\end{align*}
With $\eps=\underline \beta/2$, the first term on the right hand side can be absorbed in the left hand side. 
Multiplying by $2 \tau$, summing over $n$, and using the bounds \eqref{eq:reg1}--\eqref{eq:reg2}, we obtain
\begin{align}\label{eq:aux8}
\|d_h^n\|^2_h + \sum_{k=1}^n \tau \|\dx d_h^n\|_{L^2(\E)}^2 
\le C \left(  \sum_{k=1}^n \tau\|d_h^n\|^2_h + \sum_{k=1}^n \tau\|e_h^n\|^2_h +\tau^2 + h^2\right).
\end{align}
An application of the discrete Gronwall lemma \eqref{eq:discrete_gronwall} further yields
\begin{align*}
\|d_h^n\|^2_h + \sum_{k=1}^n \tau \|\dx d_h^n\|_{L^2(\E)}^2 
\le C' \left( \sum_{k=1}^n \tau\|e_h^n\|^2_h +\tau^2 + h^2\right).
\end{align*}
Inserting this estimate into \eqref{eq:aux7} and applying the discrete Gronwall lemma \eqref{eq:discrete_gronwall} once more, 
we can conclude that 
\begin{align}
\|e_h^n\|_h^2 + \sum_{k=1}^n \tau  \|\dx e_h^n\|^2_{L^2(\E)} 
\le C (\tau^2 + h^2).
\end{align}
This is the required estimate for the first component of the discrete error. 
Using this estimate in \eqref{eq:aux8} yields the bound for the second component of the discrete error.
\end{proof}

\subsection{Proof of Theorem~\ref{thm:error}}
The error splitting \eqref{eq:split1}--\eqref{eq:split2}, we directly obtain 
\begin{align*}
\|u - u_{h,\tau}\| &\le \|u - R_h u\| + \|R_h u - u_{h,\tau}\|, \\
\|c - c_{h,\tau}\| &\le \|c - R_h c\| + \|R_h c - c_{h,\tau}\|.
\end{align*}
The assertion of the theorem now follows by simply estimating the individual terms 
with the help of the bounds provided by Lemma~\ref{lem:6.3} and Lemma~\ref{lem:6.8}

\section{Numerical illustration}\label{sec:numerics}

In this section, we illustrate the theoretical results of the paper by some numerical tests. %

\subsection{Tripod network}
Let us start with verifying the convergence rates obtained in Section~\ref{sec:discretization}. To this end, we study the chemotaxis problem on a network consisting of three pipes meeting at a junction; see Figure~\ref{fig:graph}. We choose the edge lengths $l_i = l_{e_i} = 1$ and the parameters $\alpha_i,\chi_i,\gamma_i,\delta_i = 1$, $\beta_i = 0.1$ for $i=1,2,3$. For the initial values we let 
\begin{align*} 
u_{i,0}(x) = 4, \quad i=1,2,3; \qquad c_{i,0}(x) = 0, \quad i=1,2;\ c_{3,0}(x) = 1-\cos(\pi x), \quad x\in [0,1].
\end{align*}
In the following, we present the time-evolution of the two concentrations with discretization parameters $h=2^{-4}, \tau =2^{-7}$.
This setup leads to the occurrence of a peak of both concentrations at vertex $v_4$. The time-evolution is illustrated in figures \ref{fig:evolutionC} and \ref{fig:evolutionU}.
\begin{figure}[ht!]
\begin{subfigure}{.30\textwidth}
  \centering
  \includegraphics[width=.9\linewidth]{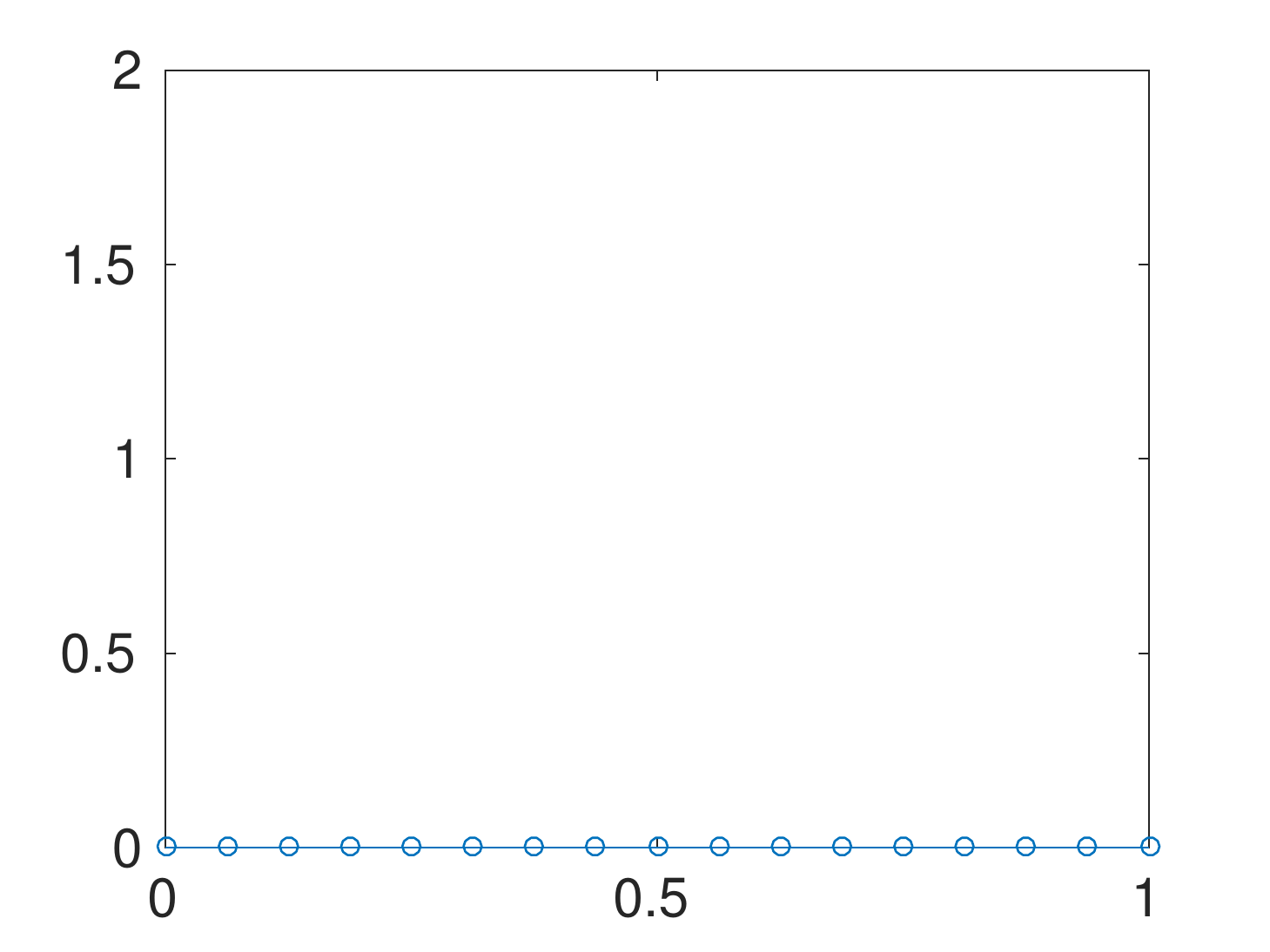}
  \caption*{$e_1$ at $t=0$}
  \label{fig:C1_t=0}
\end{subfigure}%
\begin{subfigure}{.30\textwidth}
  \centering
  \includegraphics[width=.9\linewidth]{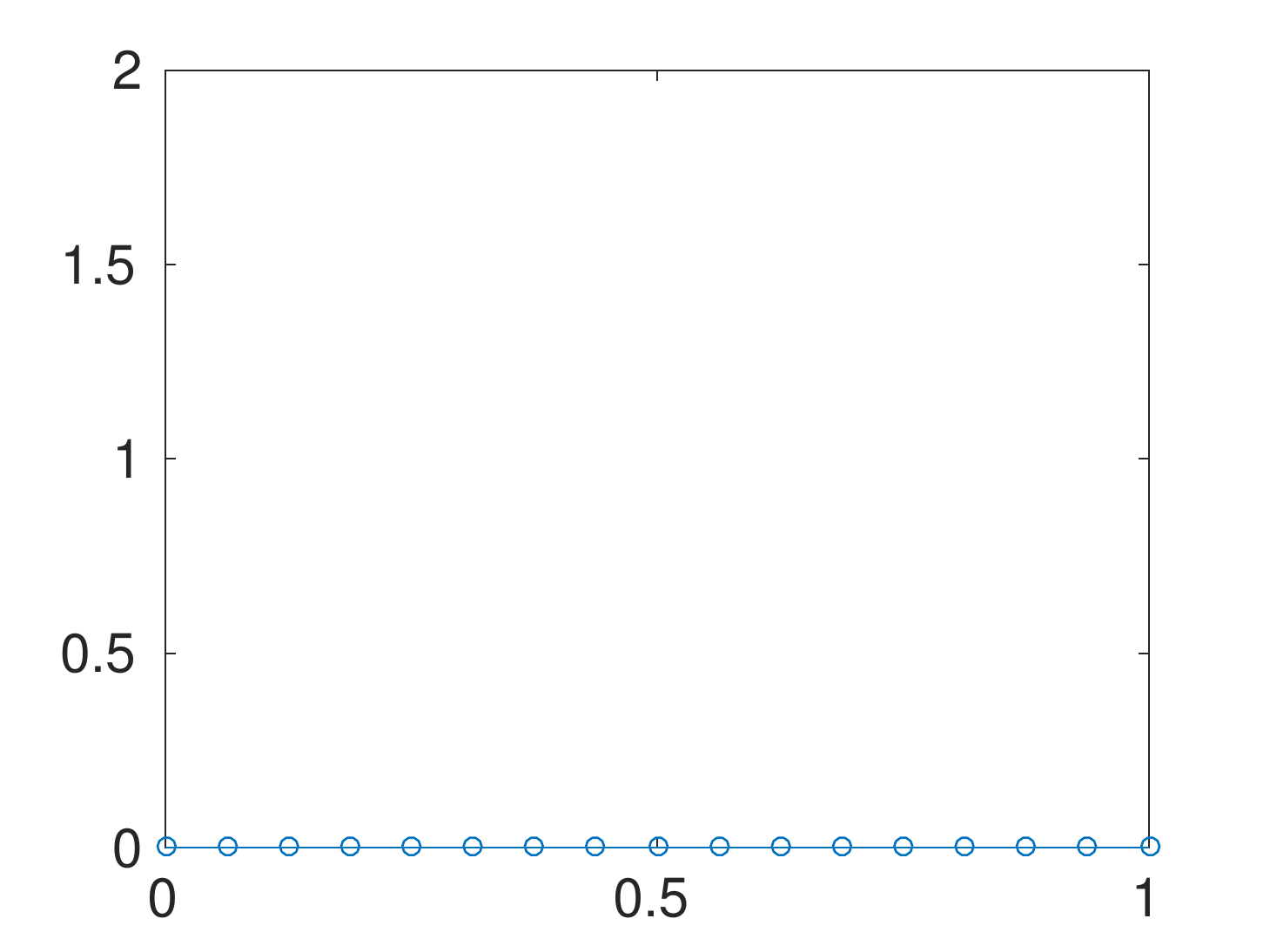}
  \caption*{$e_2$ at $t=0$}
  \label{fig:C2_t=0}
\end{subfigure} 
\begin{subfigure}{.30\textwidth}
  \centering
  \includegraphics[width=.9\linewidth]{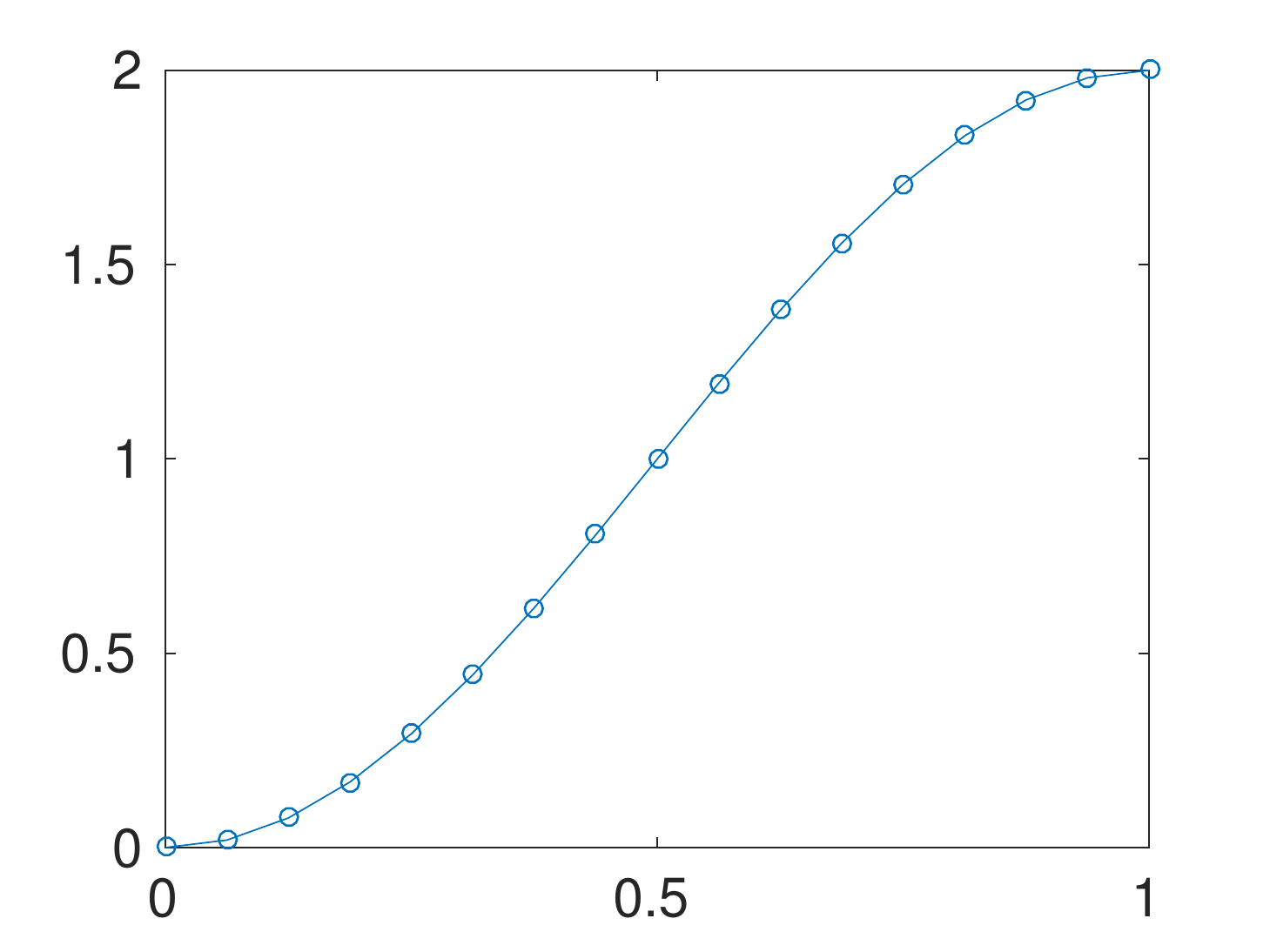}
  \caption*{$e_3$ at $t=0$}
  \label{fig:C3_t=0}
\end{subfigure} 
\begin{subfigure}{.30\textwidth}
  \centering
  \includegraphics[width=.9\linewidth]{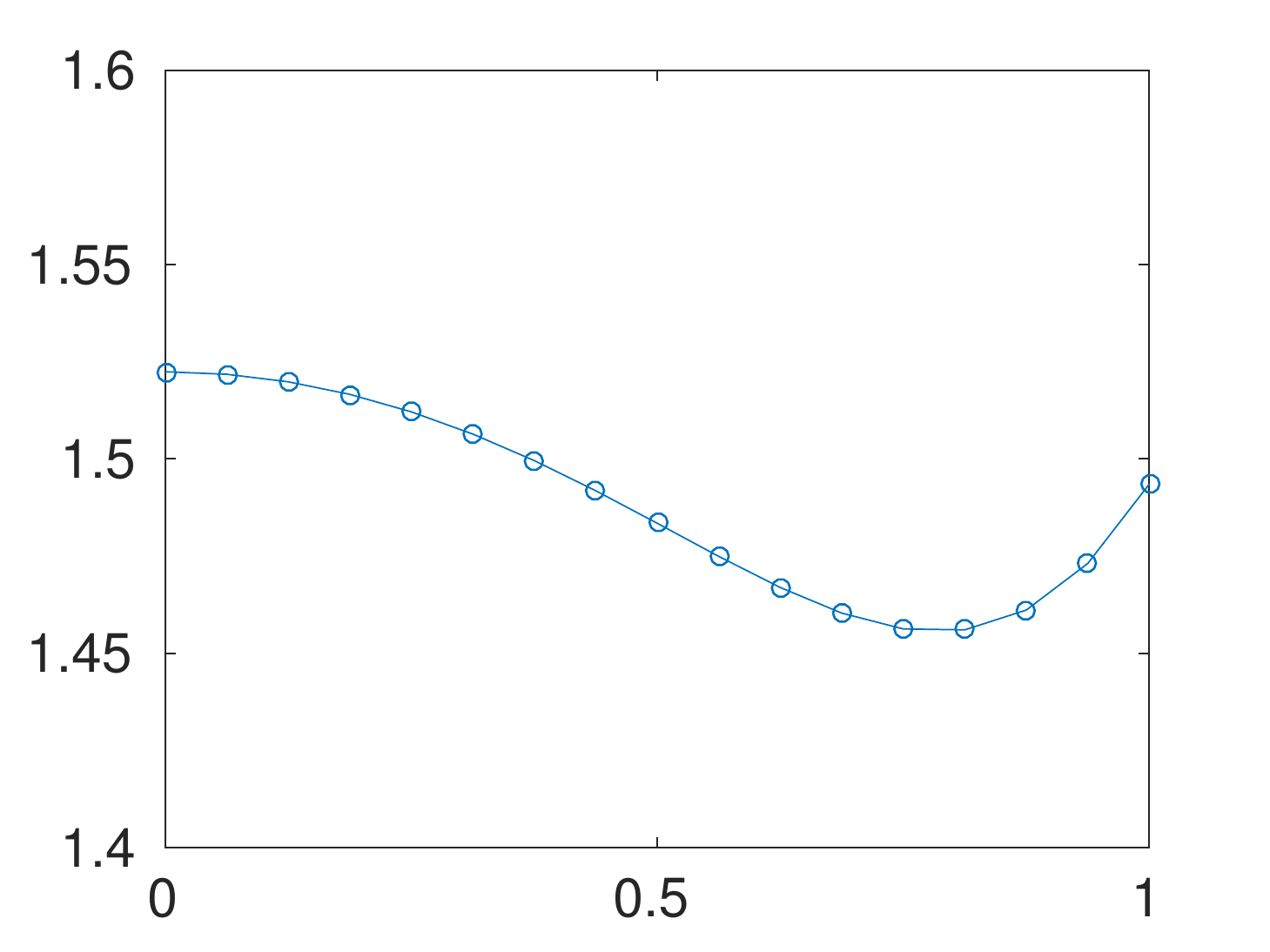}
  \caption*{$e_1$ at $t=0.5$}
  \label{fig:C1_t=0.5}
\end{subfigure}%
\begin{subfigure}{.30\textwidth}
  \centering
  \includegraphics[width=.9\linewidth]{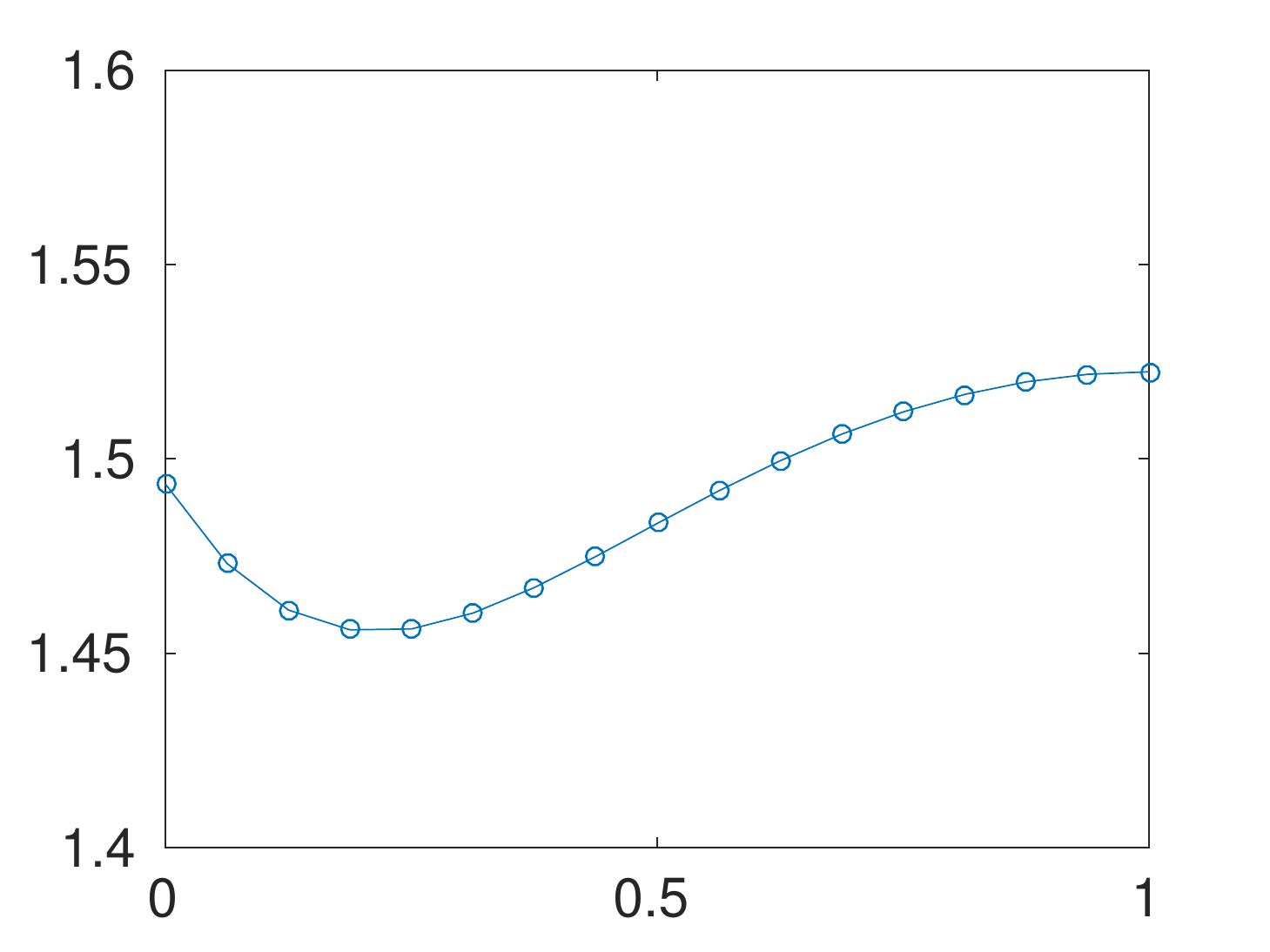}
  \caption*{$e_2$ at $t=0.5$}
  \label{fig:C2_t=0.5}
\end{subfigure} 
\begin{subfigure}{.30\textwidth}
  \centering
  \includegraphics[width=.9\linewidth]{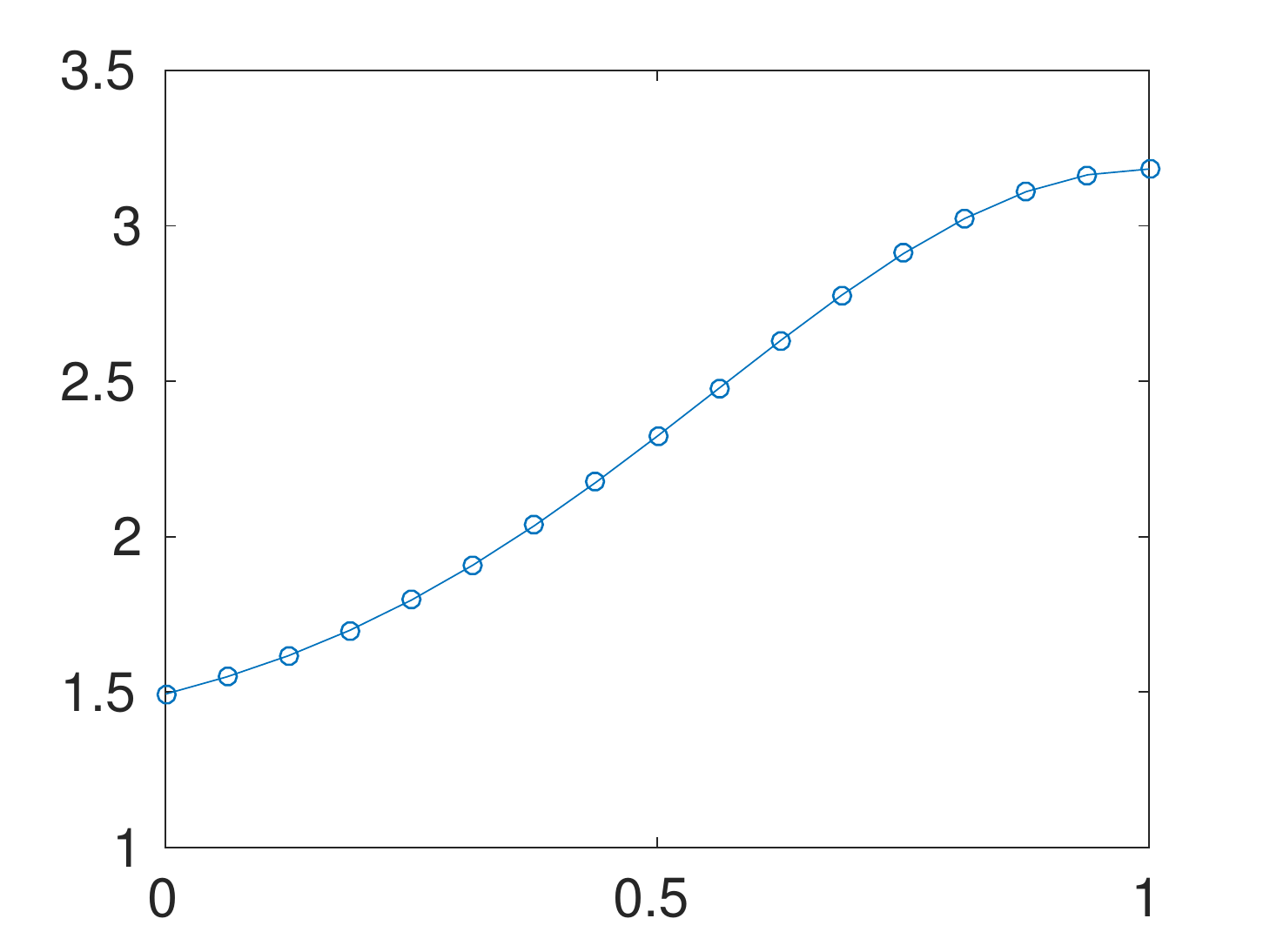}
  \caption*{$e_3$ at $t=0.5$}
  \label{fig:C3_t=0.5}
\end{subfigure} 
\begin{subfigure}{.30\textwidth}
  \centering
  \includegraphics[width=.9\linewidth]{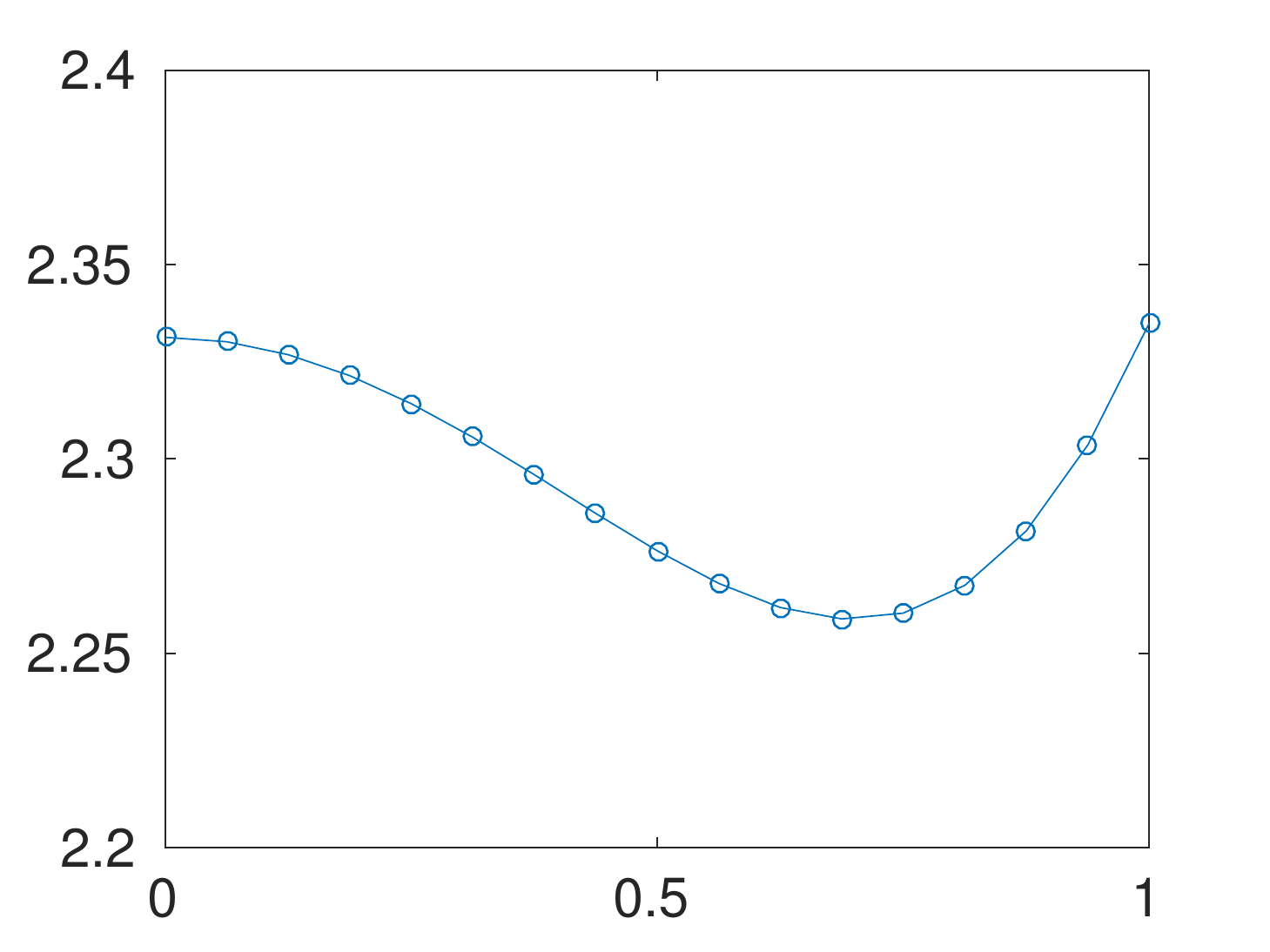}
  \caption*{$e_1$ at $t=1$}
  \label{fig:C1_t=1}
\end{subfigure}
\begin{subfigure}{.30\textwidth}
  \centering
  \includegraphics[width=.9\linewidth]{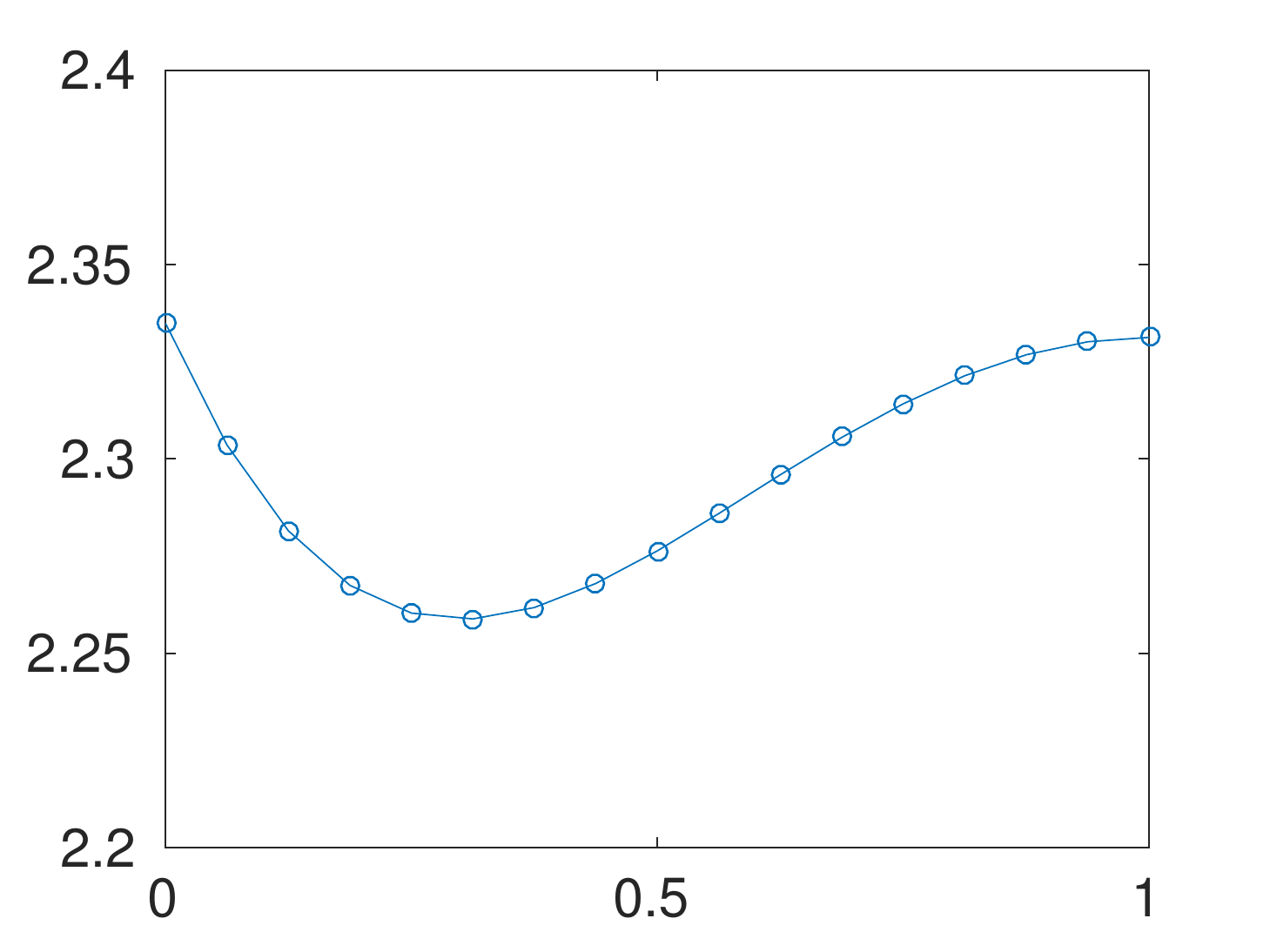}
  \caption*{$e_2$ at $t=1$}
  \label{fig:C2_t=1}
\end{subfigure}
\begin{subfigure}{.30\textwidth}
  \centering
  \includegraphics[width=.9\linewidth]{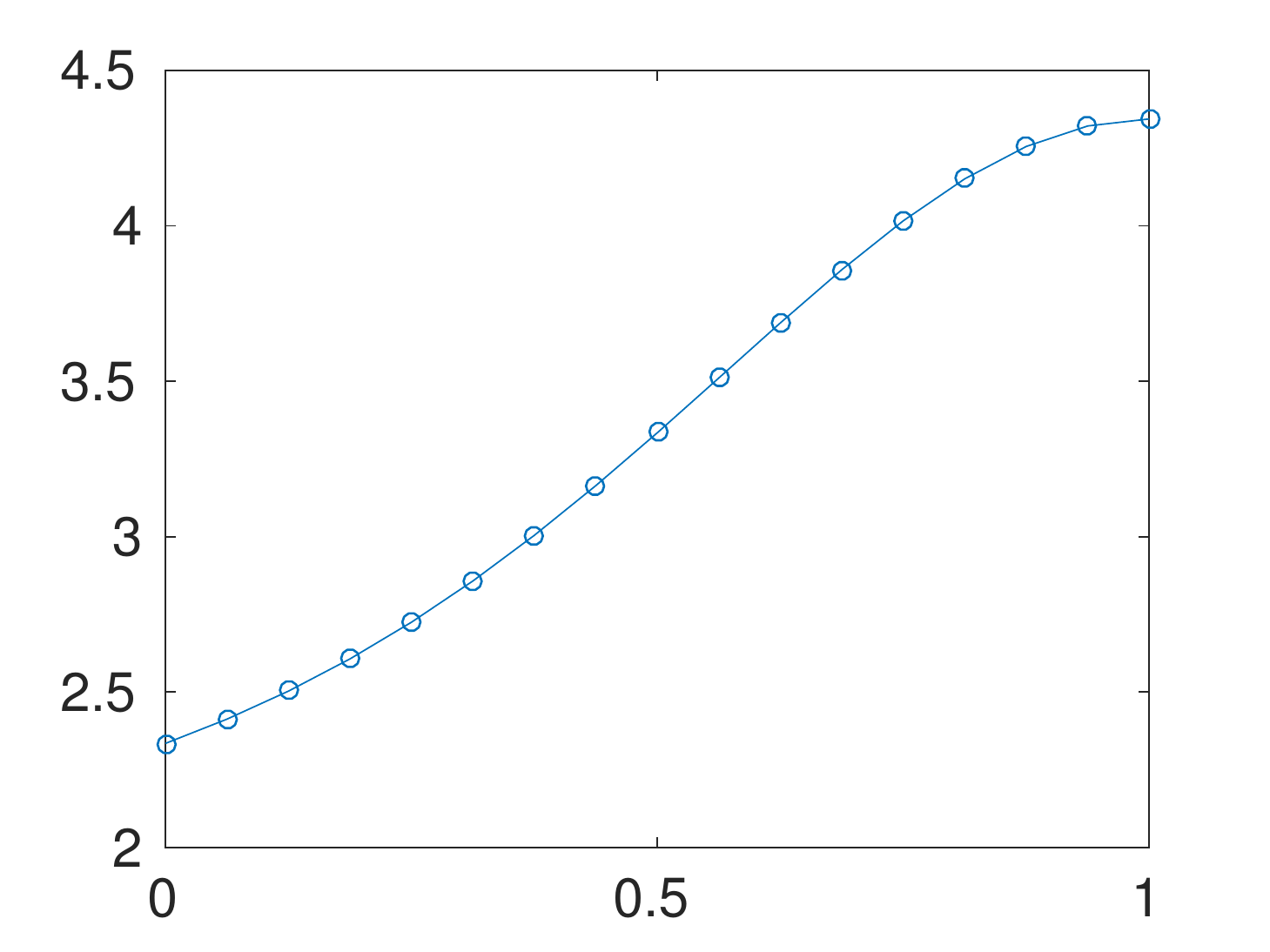}
  \caption*{$e_3$ at $t=1$}
  \label{fig:C3_t=1}
\end{subfigure}
\caption{Snapshots of the concentration $c_h(t)$ of the chemoattractant for the pipes $e_i$ 
at times $t=0,0.5,1$ obtained with meshsize $h=2^{-4}$.}
\label{fig:evolutionC}
\end{figure}

\begin{figure}[ht!]
\begin{subfigure}{.32\textwidth}
  \centering
  \includegraphics[width=.9\linewidth]{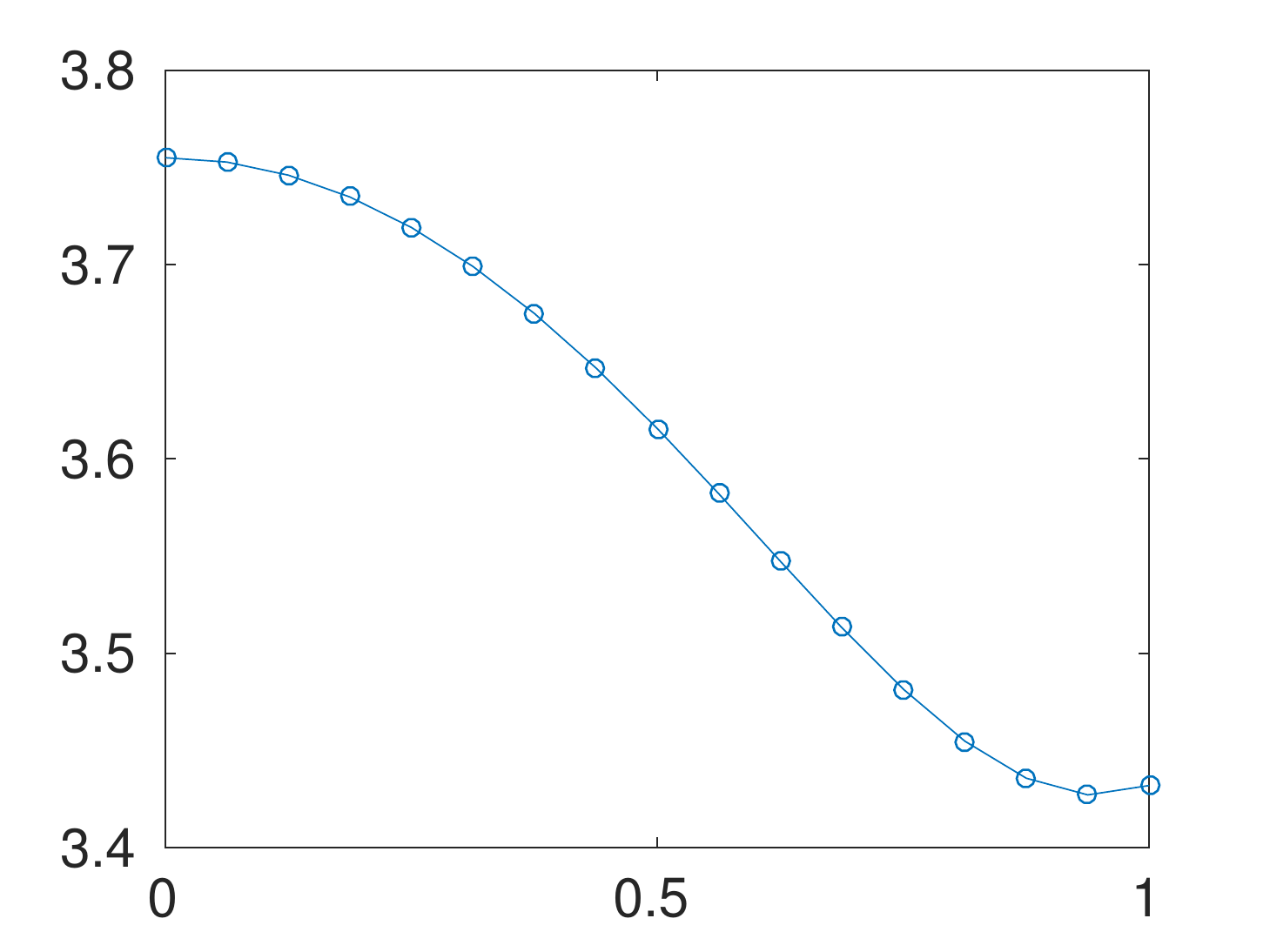}
  \caption*{$e_1$ at $t=0.5$}
  \label{fig:U1_t=0.5}
\end{subfigure}%
\begin{subfigure}{.32\textwidth}
  \centering
  \includegraphics[width=.9\linewidth]{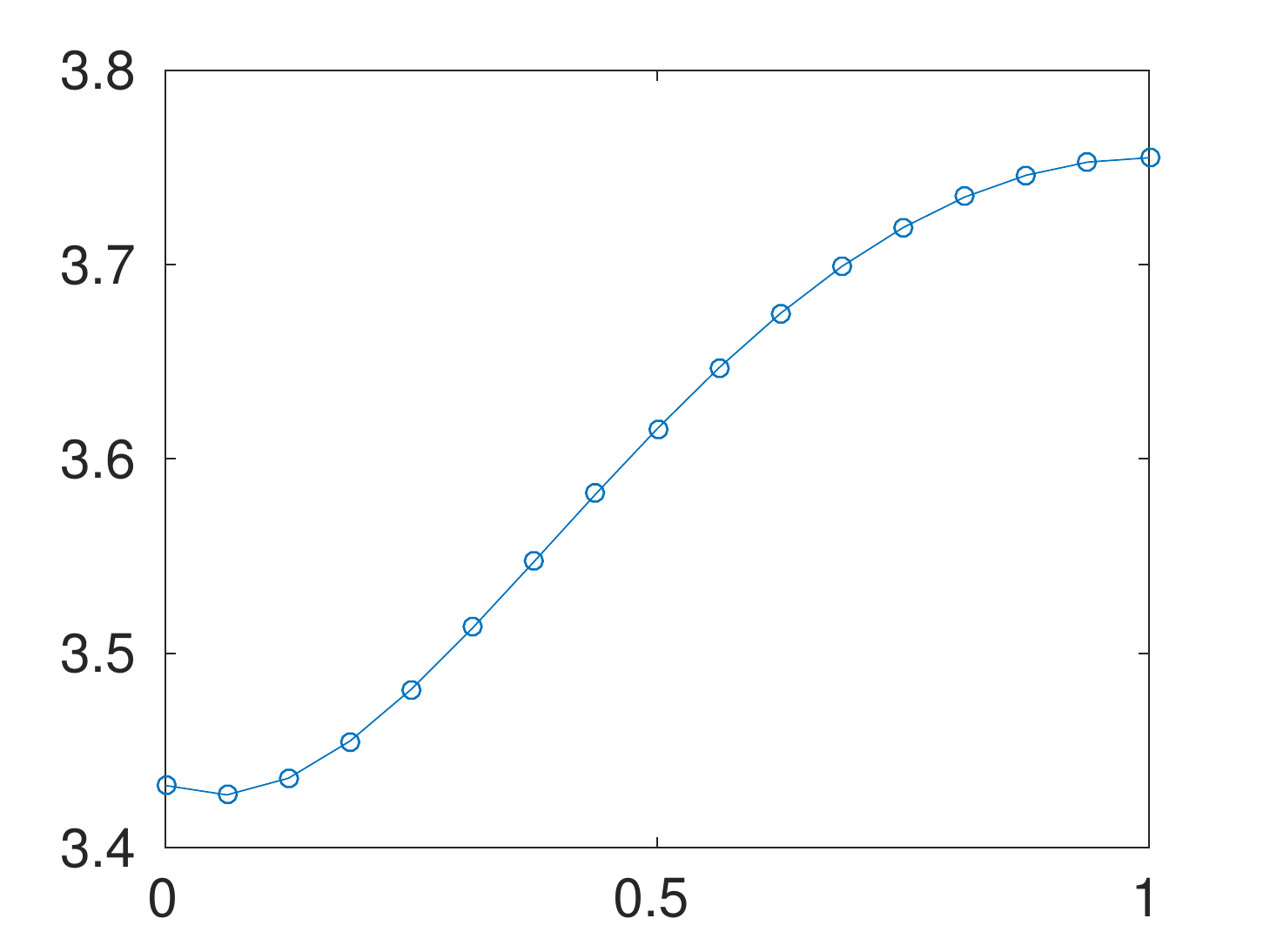}
  \caption*{$e_2$ at $t=0.5$}
  \label{fig:U2_t=0.5}
\end{subfigure} 
\begin{subfigure}{.32\textwidth}
  \centering
  \includegraphics[width=.9\linewidth]{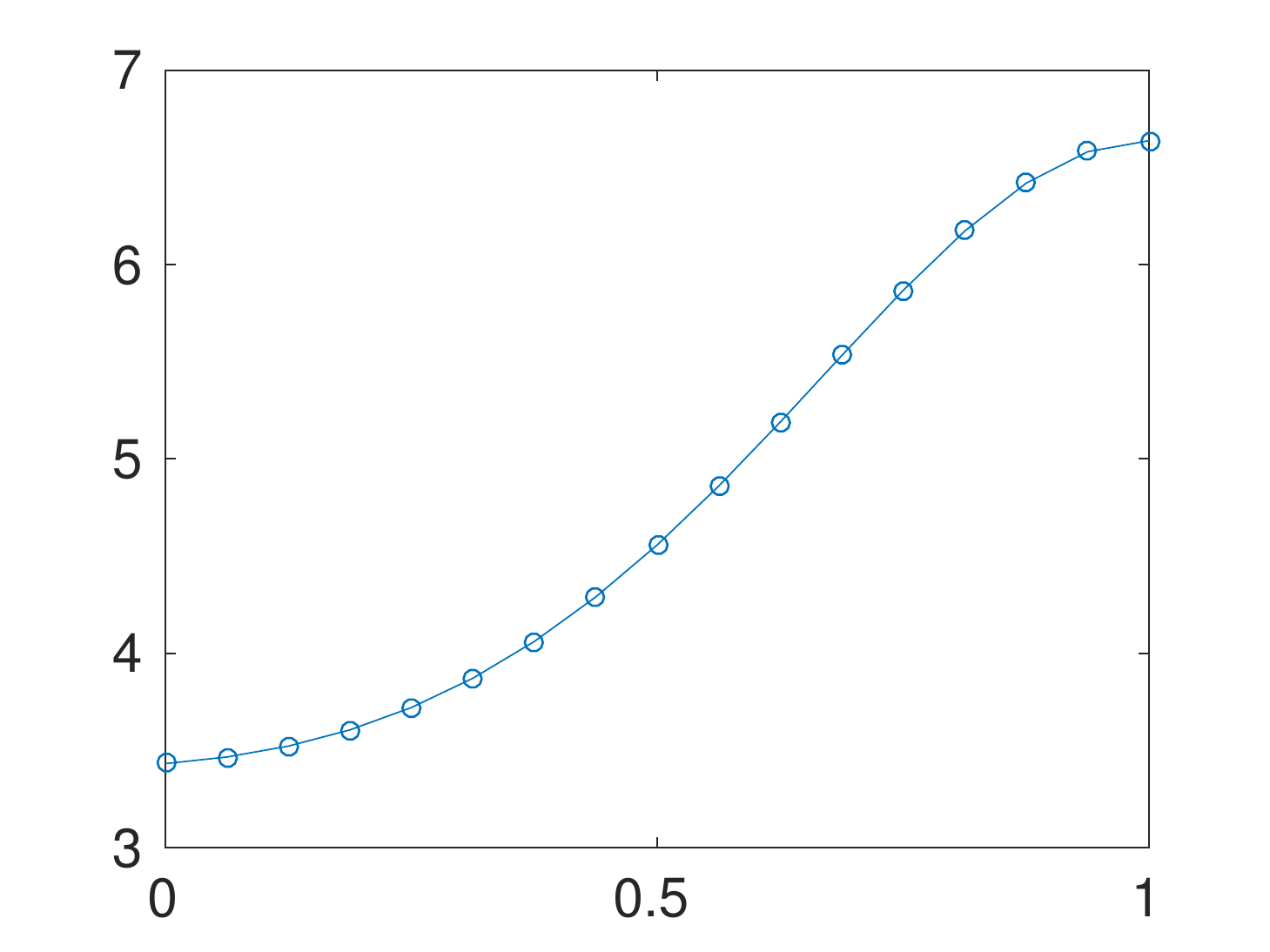}
  \caption*{$e_3$ at $t=0.5$}
  \label{fig:U3_t=0.5}
\end{subfigure} 
\begin{subfigure}{.32\textwidth}
  \centering
  \includegraphics[width=.9\linewidth]{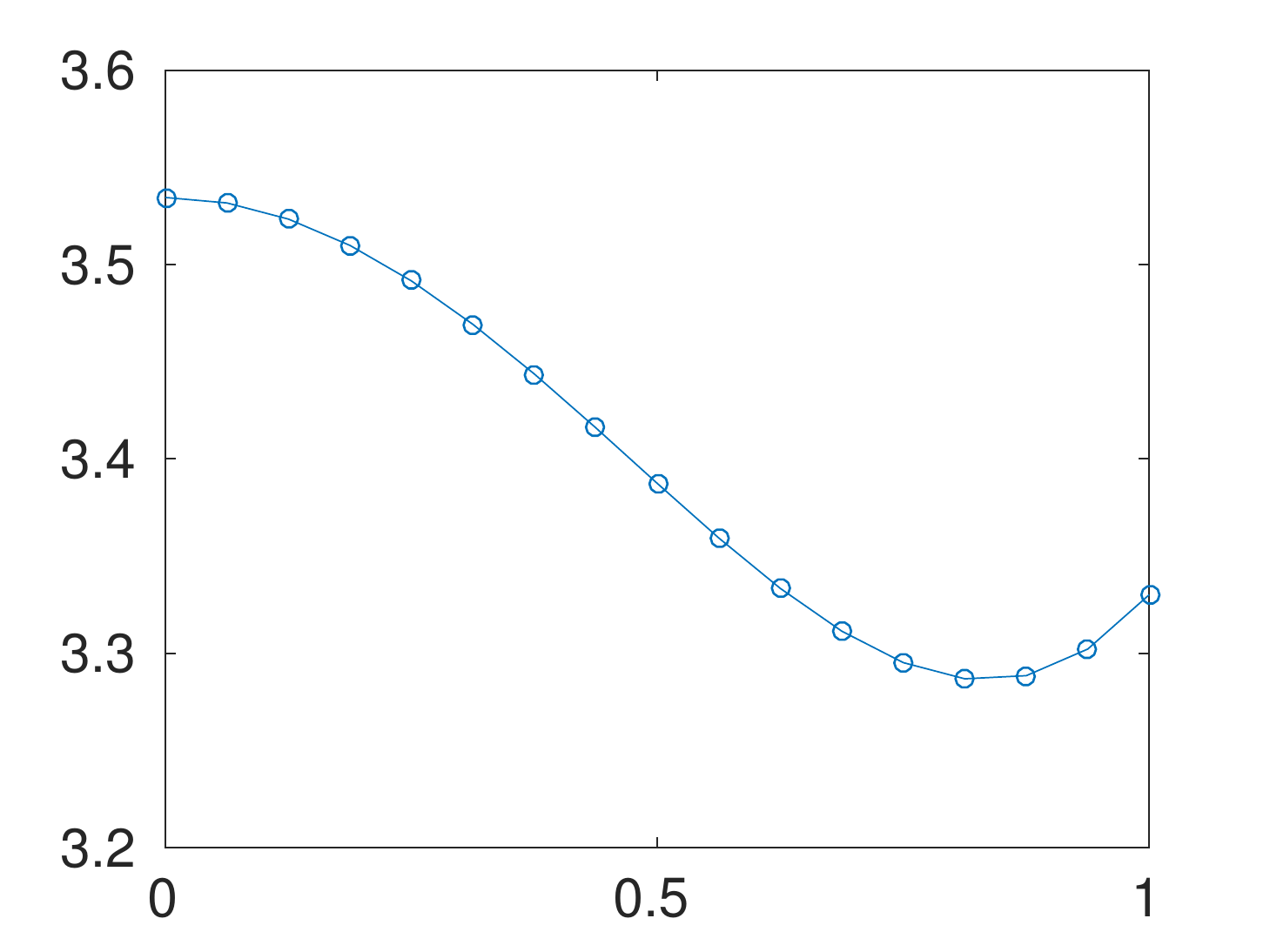}
  \caption*{$e_1$ at $t=1$}
  \label{fig:U1_t=1}
\end{subfigure}
\begin{subfigure}{.32\textwidth}
  \centering
  \includegraphics[width=.9\linewidth]{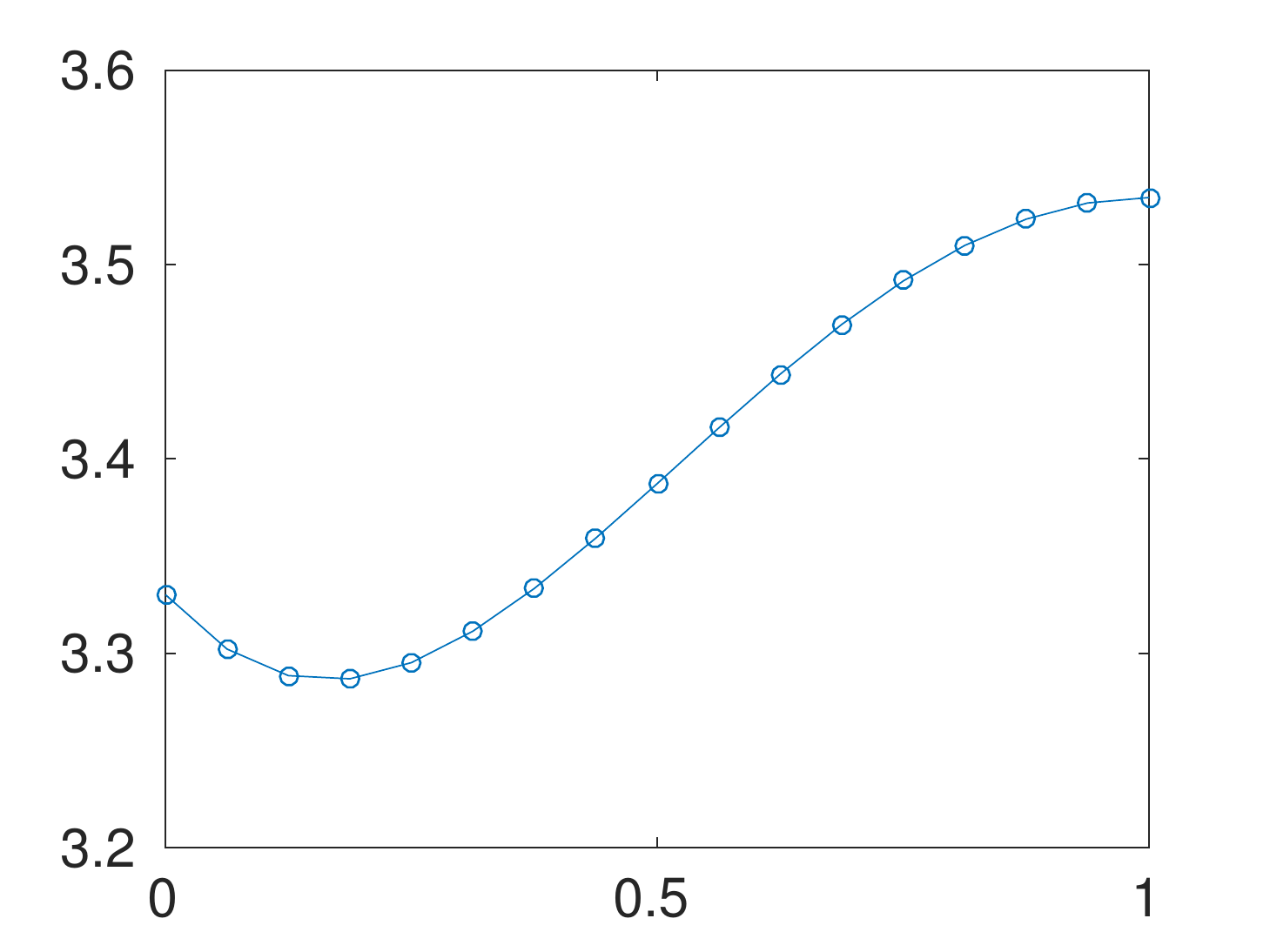}
  \caption*{$e_2$ at $t=1$}
  \label{fig:U2_t=1}
\end{subfigure}
\begin{subfigure}{.32\textwidth}
  \centering
  \includegraphics[width=.9\linewidth]{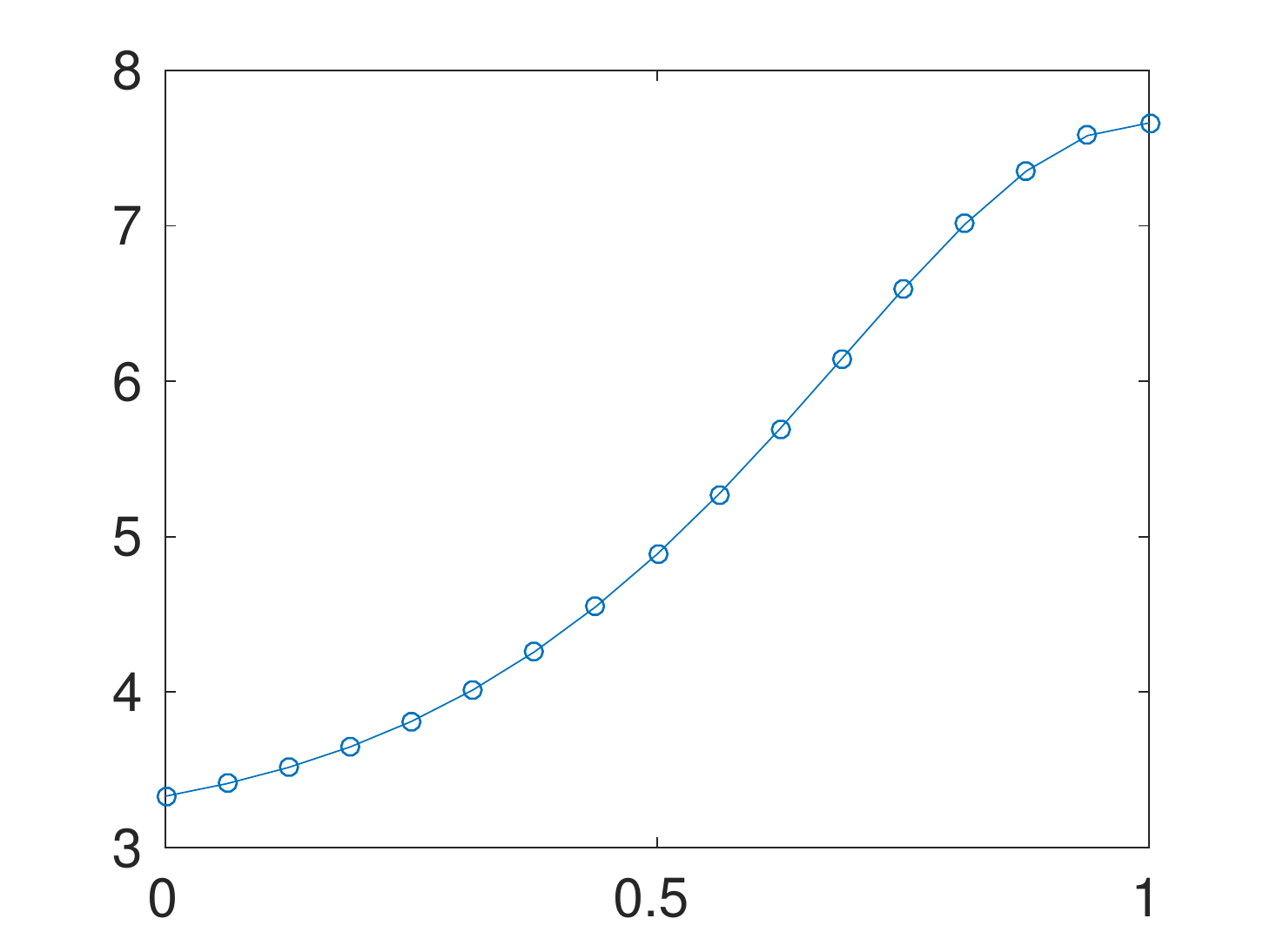}
  \caption*{$e_3$ at $t=1$}
  \label{fig:U3_t=1}
\end{subfigure}
\caption{Snapshots of the population density $u_h(t)$ for the pipes $e_i$ at times $t=0.5,1$ obtained with meshsize $h=2^{-4}$. At time $t=0$ (not shown) the solution has the constant value $4$.
\label{fig:evolutionU}}
\end{figure}

In order to verify the theoretical convergence rates obtained in section~\ref{sec:rates},
we proceed as follows: Since no analytical solution is available for this test case, we use the difference between the numerical solutions on two different meshes with meshsize $H$ and $h=H/2$ as an approximation for the actual error. Since the method has been proven to converge with a given rate, this yields accurate approximations for the true error. 
The results obtained in our numerical tests are presented in Table~\ref{tab:1} and \ref{tab:2}.
\begin{table}[ht!]
\begin{center}
\begin{tabular}{||c|c||c|c||c|c||c||}
$h$ & $\tau$ & $\|u_h-u_H\|_{L^{\infty}(0,1;L^2(\E))}$ & eoc & $\|u_h-u_H\|_{L^2(0,1;H^1(\E))}$ & eoc \\
\hline
\hline
$2^{-7}$&$2^{-10}$&0.096656&--&0.008712&-- \\
$2^{-8}$&$2^{-11}$&0.049195&0.97&0.004392&0.99 \\
$2^{-9}$&$2^{-12}$&0.024821&0.99&0.002205&0.99 \\
$2^{-10}$&$2^{-13}$&0.012467&1.00&0.001105&1.00 \\
$2^{-11}$&$2^{-14}$&0.006248&1.00&0.000553&1.00 \\
\end{tabular}
\smallskip
\caption{Numerically observed errors and estimated order of convergence for the population density $u_h$.\label{tab:1}}
\end{center}
\end{table}

\begin{table}[ht!]
\begin{center}
\begin{tabular}{||c|c||c|c||c|c||c||}
$h$ & $\tau$ &  $\|c_h-c_H\|_{L^{\infty}(0,1;L^2(\E))}$ & eoc & $\|c_h-c_H\|_{L^2(0,1; H^1(\E))}$ & eoc\\
\hline
\hline
$2^{-7}$&$2^{-10}$&0.027456&--&0.002500&-- \\
$2^{-8}$&$2^{-11}$&0.013808&0.99&0.001252&1.00 \\
$2^{-9}$&$2^{-12}$&0.006947&0.99&0.000627&1.00 \\
$2^{-10}$&$2^{-13}$&0.003499&0.99&0.000313&1.00 \\
$2^{-11}$&$2^{-14}$&0.001766&0.99&0.000156&1.00 \\
\end{tabular}
\smallskip
\caption{Numerically observed errors and estimated order of convergence for the concentration $c_h$ of the chemoattractant.\label{tab:2}}
\end{center}
\end{table}

As predicted by our theoretical results in Section~\ref{sec:rates}, 
we observe first order of convergence in both solution components and norms used for our analysis.

\subsection{Block network}
As a second test case, we consider an example proposed in \cite{BorscheGoettlichKlarSchillen14}, 
namely the block network depicted in Figure~\ref{fig:block}. 
\begin{figure}[ht!]
\hspace*{-2.5em}
\tiny
\begin{tikzpicture}[scale=0.7]
\node[circle,draw,inner sep=2pt,minimum size=0.75cm] (v0) at (-1.414,7.414) {$v_0$};
\node[circle,draw,inner sep=2pt,minimum size=0.75cm] (v1) at (0,6) {$v_1$};
\node[circle,draw,inner sep=2pt,minimum size=0.75cm] (v2) at (2,6) {$v_2$};
\node[circle,draw,inner sep=2pt,minimum size=0.75cm] (v3) at (4,6) {$v_3$};
\node[circle,draw,inner sep=2pt,minimum size=0.75cm] (v4) at (6,6) {$v_4$};
\node[circle,draw,inner sep=2pt,minimum size=0.75cm] (v5) at (0,4) {$v_5$};
\node[circle,draw,inner sep=2pt,minimum size=0.75cm] (v6) at (2,4) {$v_6$};
\node[circle,draw,inner sep=2pt,minimum size=0.75cm] (v7) at (4,4) {$v_7$};
\node[circle,draw,inner sep=2pt,minimum size=0.75cm] (v8) at (6,4) {$v_8$};
\node[circle,draw,inner sep=2pt,minimum size=0.75cm] (v9) at (0,2) {$v_9$};
\node[circle,draw,inner sep=2pt,minimum size=0.75cm] (v10) at (2,2) {$v_{10}$};
\node[circle,draw,inner sep=2pt,minimum size=0.75cm] (v11) at (4,2) {$v_{11}$};
\node[circle,draw,inner sep=2pt,minimum size=0.75cm] (v12) at (6,2) {$v_{12}$};
\node[circle,draw,inner sep=2pt,minimum size=0.75cm] (v13) at (0,0) {$v_{13}$};
\node[circle,draw,inner sep=2pt,minimum size=0.75cm] (v14) at (2,0) {$v_{14}$};
\node[circle,draw,inner sep=2pt,minimum size=0.75cm] (v15) at (4,0) {$v_{15}$};
\node[circle,draw,inner sep=2pt,minimum size=0.75cm] (v16) at (6,0) {$v_{16}$};
\node[circle,draw,inner sep=2pt,minimum size=0.75cm] (v17) at (7.414,-1.414) {$v_{17}$};
\draw[->,red,dashed,thick,line width=1.5pt] (v0) -- node[above,sloped] {$e_1$} ++(v1);
\draw[->,thick,line width=1.5pt] (v1) -- node[above] {$e_2$} ++(v2);
\draw[->,thick,line width=1.5pt] (v2) -- node[above] {$e_3$} ++(v3);
\draw[->,thick,line width=1.5pt] (v3) -- node[above] {$e_4$} ++(v4);
\draw[->,red,dashed,thick,line width=1.5pt] (v1) -- node[right] {$e_5$} ++(v5);
\draw[->,thick,line width=1.5pt] (v2) -- node[right] {$e_6$} ++(v6);
\draw[->,thick,line width=1.5pt] (v3) -- node[right] {$e_7$} ++(v7);
\draw[->,thick,line width=1.5pt] (v4) -- node[right] {$e_8$} ++(v8);
\draw[->,red,dashed,thick,line width=1.5pt] (v5) -- node[above] {$e_9$} ++(v6);
\draw[->,thick,line width=1.5pt] (v6) -- node[above] {$e_{10}$} ++(v7);
\draw[->,thick,line width=1.5pt] (v7) -- node[above] {$e_{11}$} ++(v8);
\draw[->,line width=1.5pt] (v5) -- node[right] {$e_{12}$} ++(v9);
\draw[->,red,dashed,thick,line width=1.5pt] (v6) -- node[right] {$e_{13}$} ++(v10);
\draw[->,thick,line width=1.5pt] (v7) -- node[right] {$e_{14}$} ++(v11);
\draw[->,thick,line width=1.5pt] (v8) -- node[right] {$e_{15}$} ++(v12);
\draw[->,thick,line width=1.5pt] (v9) -- node[above] {$e_{16}$} ++(v10);
\draw[->,red,dashed,thick,line width=1.5pt] (v10) -- node[above] {$e_{17}$} ++(v11);
\draw[->,thick,line width=1.5pt] (v11) -- node[above] {$e_{18}$} ++(v12);
\draw[->,thick,line width=1.5pt] (v9) -- node[right] {$e_{19}$} ++(v13);
\draw[->,thick,line width=1.5pt] (v10) -- node[right] {$e_{20}$} ++(v14);
\draw[->,red,dashed,thick,line width=1.5pt] (v11) -- node[right] {$e_{21}$} ++(v15);
\draw[->,thick,line width=1.5pt] (v12) -- node[right] {$e_{22}$} ++(v16);
\draw[->,thick,line width=1.5pt] (v13) -- node[above] {$e_{23}$} ++(v14);
\draw[->,thick,line width=1.5pt] (v14) -- node[above] {$e_{24}$} ++(v15);
\draw[->,red,dashed,thick,line width=1.5pt] (v15) -- node[above] {$e_{25}$} ++(v16);
\draw[->,red,dashed,thick,line width=1.5pt] (v16) -- node[above,sloped] {$e_{26}$} ++(v17);
\end{tikzpicture}
\caption{Block network. The shortest path between the vertices $v_0$ and $v_{17}$ is given by the edges $e_1,e_5,e_9,e_{13},e_{17},e_{21},e_{25},e_{26}$.\label{fig:block}} 
\end{figure}
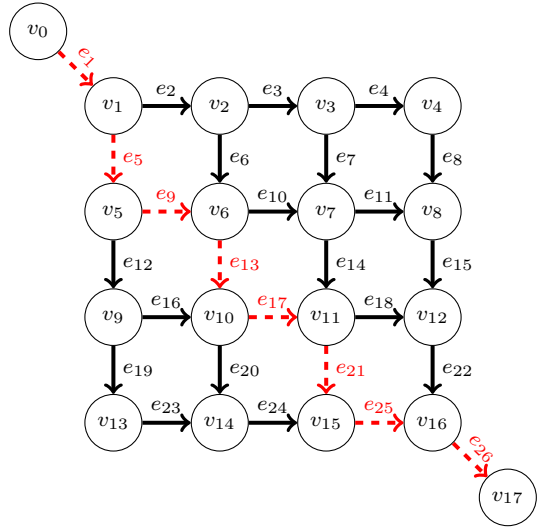
The set of edges $\E= \{e_1,\ldots,e_{26}\}$ is here partitioned into two disjoint subsets 
\begin {align*} 
\E_1 = \{ e_1,e_5,e_9,e_{13},e_{17},e_{21},e_{25},e_{26}\} 
\qquad \text{and} \qquad 
\E_2 = \E \setminus \E_1
\end {align*}
of pipes with different physical properties. The lengths of the pipes are chosen as $l_i = l_{e_i} = 1$ for all $i\in \E$, but the model parameters are chosen differently by
\begin{align*}
	\alpha_i,\beta_i,\chi_i = 100,\ e_i \in \E_1, \qquad \alpha_i,\beta_i,\chi_i = 1,\ e_i \in \E_2, \qquad \gamma_i,\delta_i = 0.1,\ \forall e_i \in \E.
\end{align*}
The initial values for the two solution components are simply chosen as
\begin{align*} 
u_{i,0}(x) = 1 \qquad \text{and} \qquad c_{i,0}(x) = 0, \qquad e \in \E. 
\end{align*}
In order to get an interesting behavior, the boundary conditions at the two ports of the network are chose as follows: 
\begin{alignat*}{4}
	\alpha \dn u(v_0,t) - \chi \dn c(v_0,t) u(v_0,t) &= \frac{2}{1+u(v_0,t)},
	       \qquad & \beta \dn c(v_0,t) &= 0, \\
	\alpha \dn u(v_{17},t) - \chi \dn c(v_{17},t) u(v_{17},t) &= 0, 
	       \qquad & \beta \dn c(v_{17},t) &= \frac{2}{1+c_{26}(v_{17},t)}.
\end{alignat*}
This means that the bacteria with density $u$ enter the network at node $v_0$ 
and they are expected to move towards $v_{26}$ where the chemoattractant $c$ is added. 
Due to the different properties of the individual pipes, we expect the bacteria to move along the red path highlighted in Figure~\ref{fig:block}. 
Some snapshots of the solution $u_h(t)$ and $c_h(t)$ computed with meshsize $h=2^{-5}$ and time step $\tau = 2^{-7}$ are shown in Figures~\ref{fig:blockU} and \ref{fig:blockC}.

\begin{figure}[ht!]
\begin{subfigure}{.35\textwidth}
  \centering
  \includegraphics[width=1\linewidth]{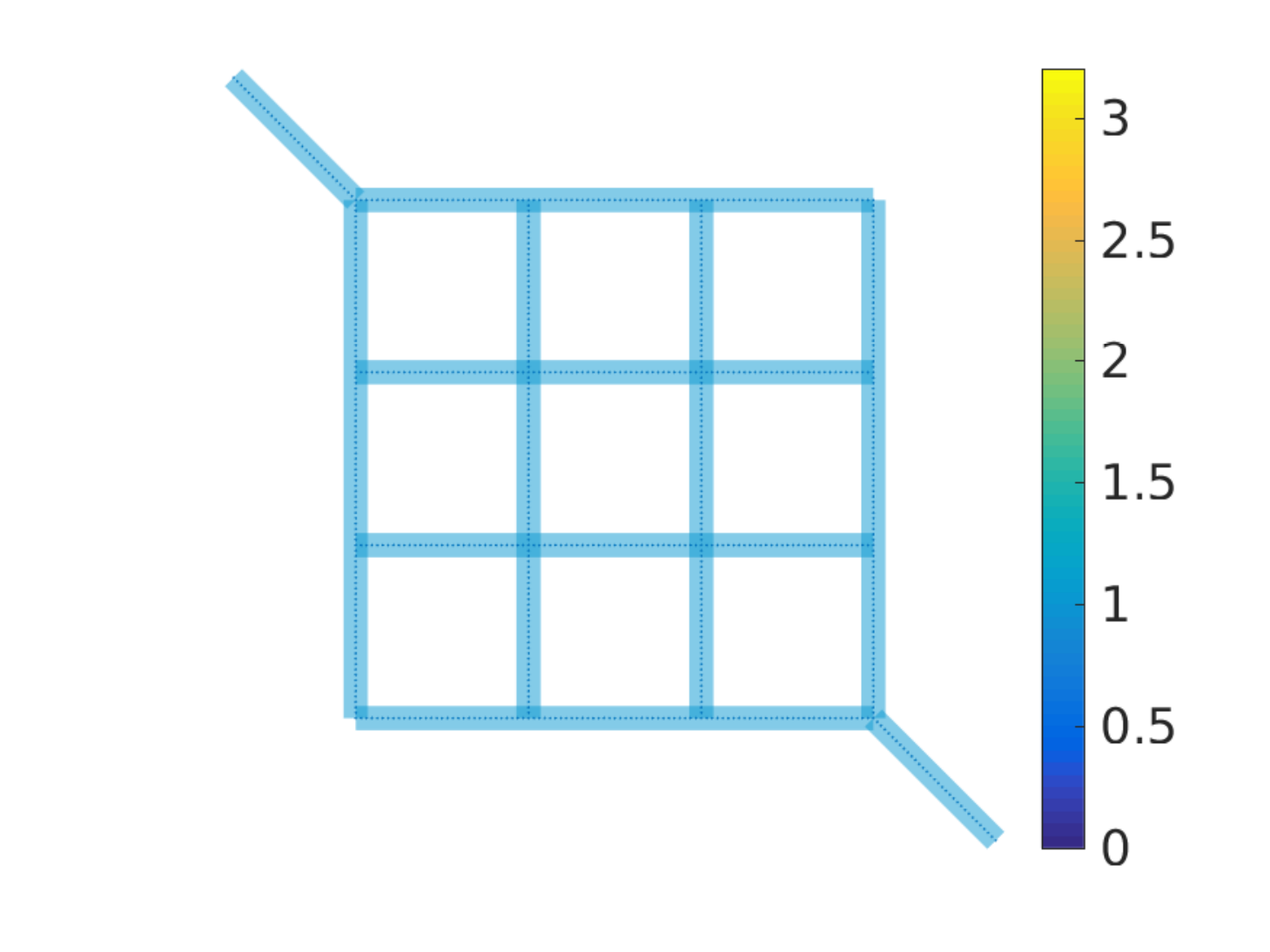}
  \vskip-1em
  \caption*{$t=0$}
  \vskip1em
  \label{fig:blockU_t=0}
\end{subfigure}%
\begin{subfigure}{.35\textwidth}
  \centering
  \includegraphics[width=1\linewidth]{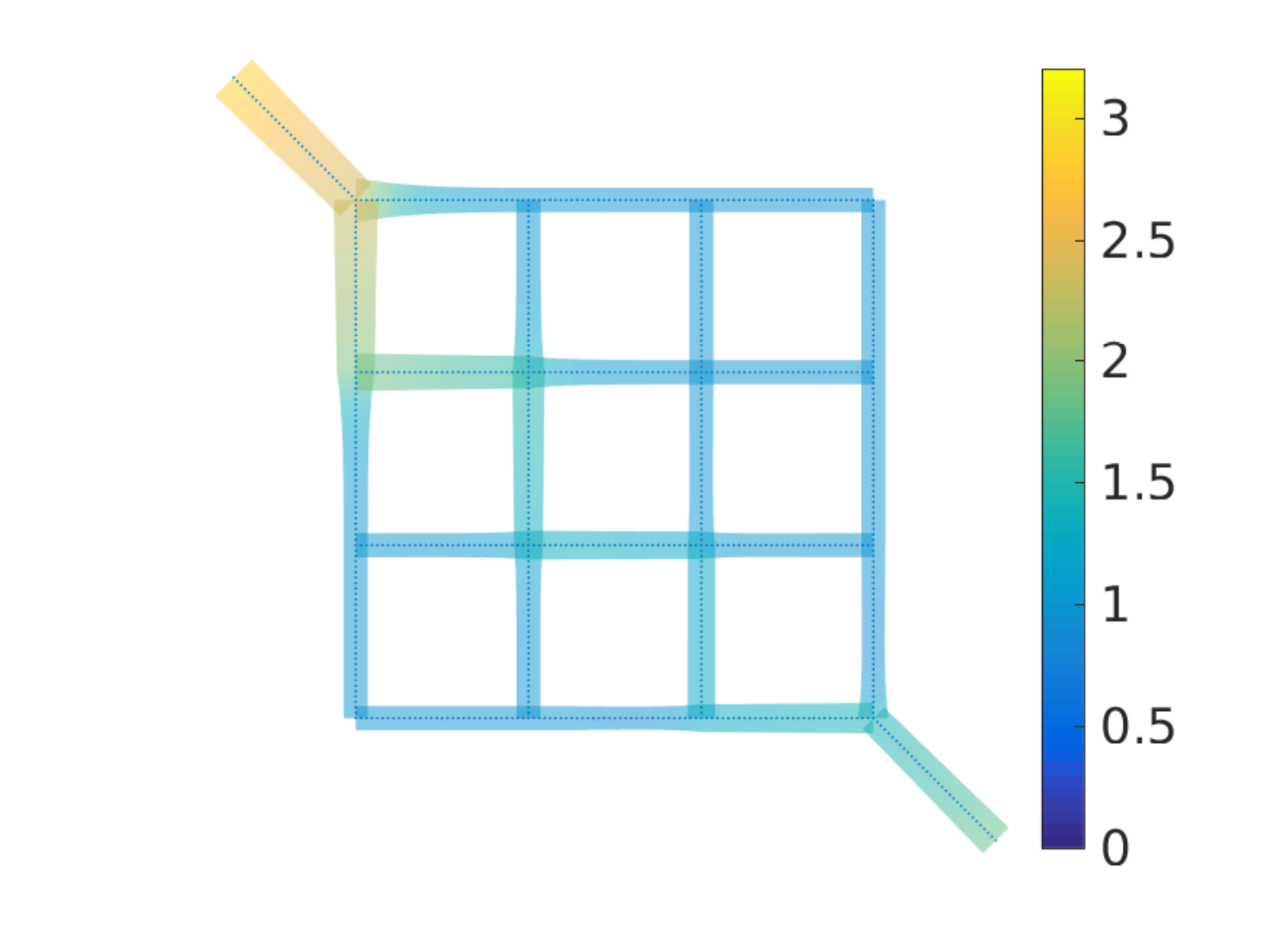}
  \vskip-1em
  \caption*{$t=10$}
  \vskip1em
  \label{fig:blockU_t=10}
\end{subfigure} \\
\begin{subfigure}{.35\textwidth}
  \centering
  \includegraphics[width=.9\linewidth]{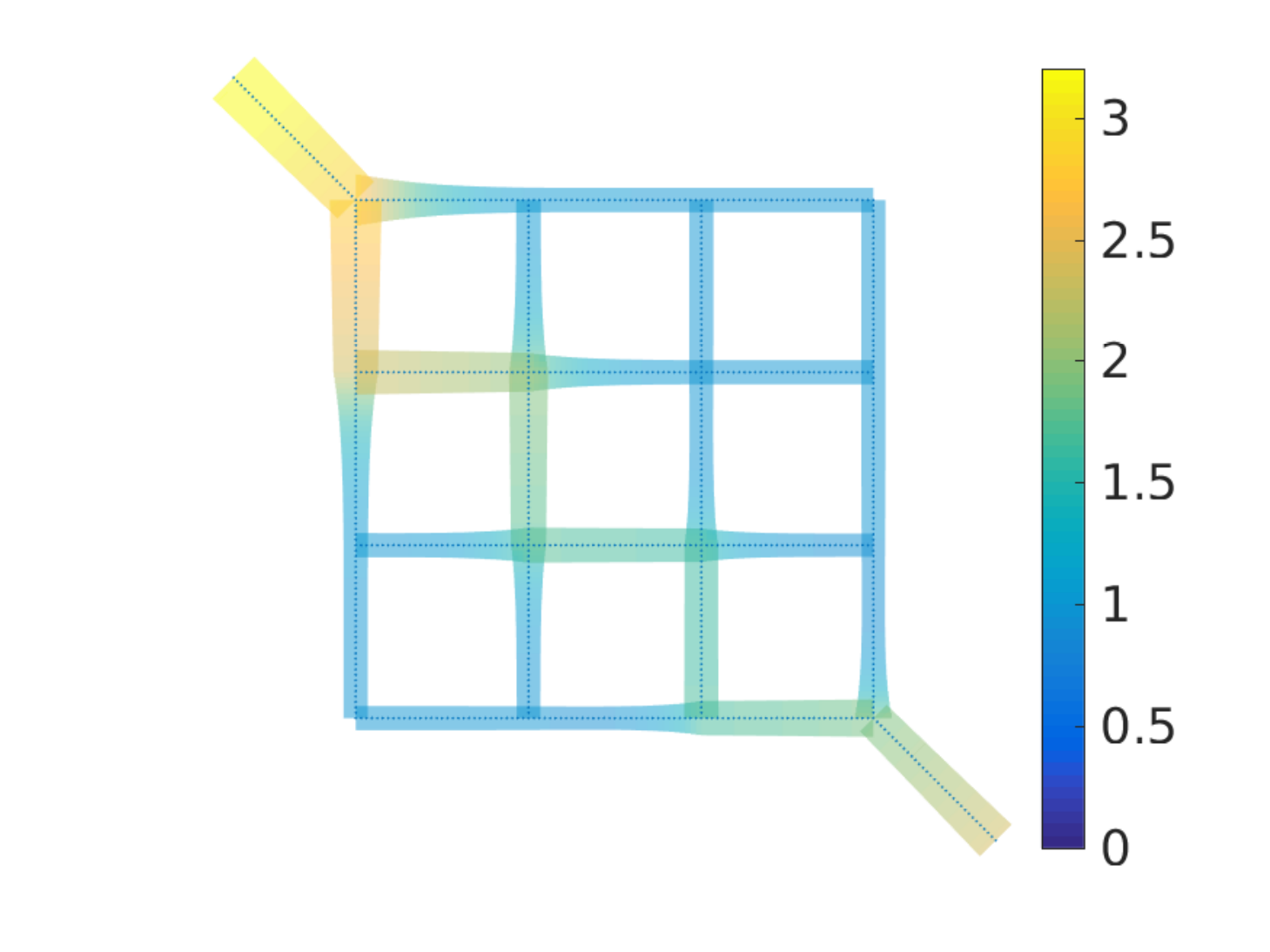}
  \vskip-1em
  \caption*{$t=20$}
  \vskip1em
  \label{fig:blockU_t=20}
\end{subfigure}
\begin{subfigure}{.35\textwidth}
  \centering
  \includegraphics[width=.9\linewidth]{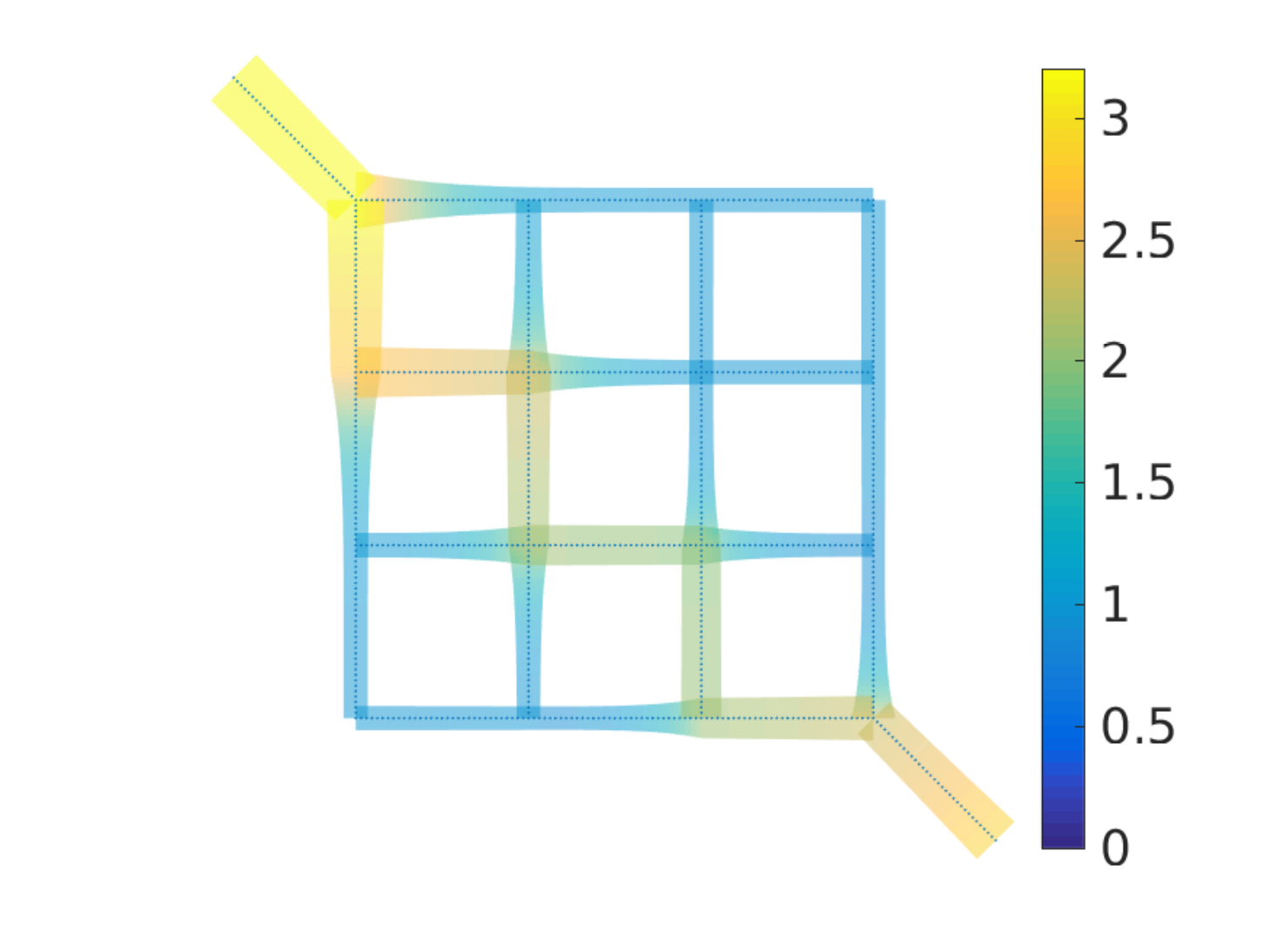}
  \vskip-1em
  \caption*{$t=30$}
  \vskip1em
  \label{fig:blockU_t=30}
\end{subfigure}
\caption{Snapshots of the population density $u_h(t)$ for $t=0,10,20,30$.}
\label{fig:blockU}
\end{figure}
\begin{figure}[ht!]
\begin{subfigure}{.35\textwidth}
  \centering
  \includegraphics[width=1\linewidth]{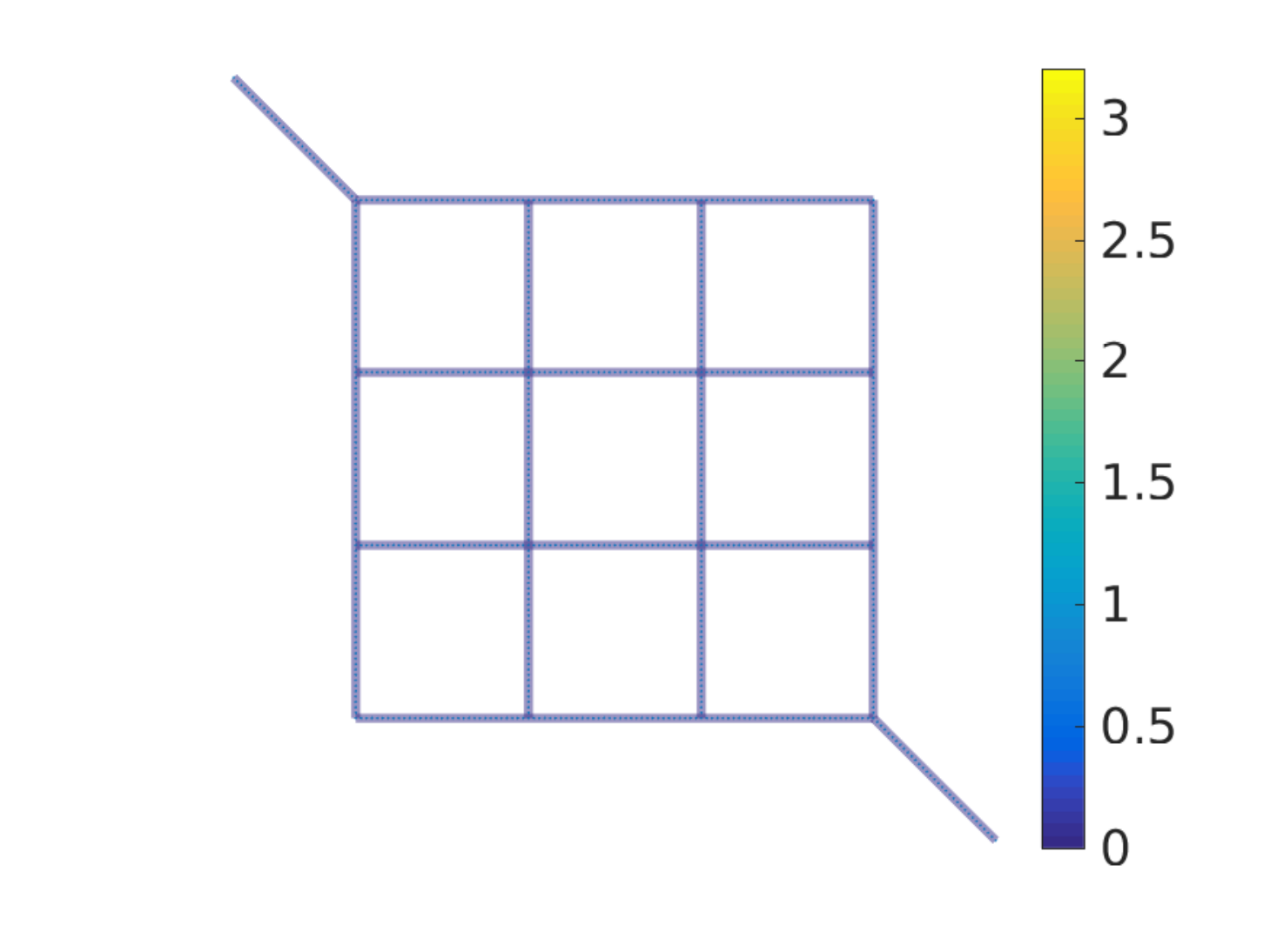}
  \vskip-2em
  \caption*{$t=0$}
  \vskip1em
  \label{fig:blockC_t=0}
\end{subfigure}%
\begin{subfigure}{.35\textwidth}
  \centering
  \includegraphics[width=1\linewidth]{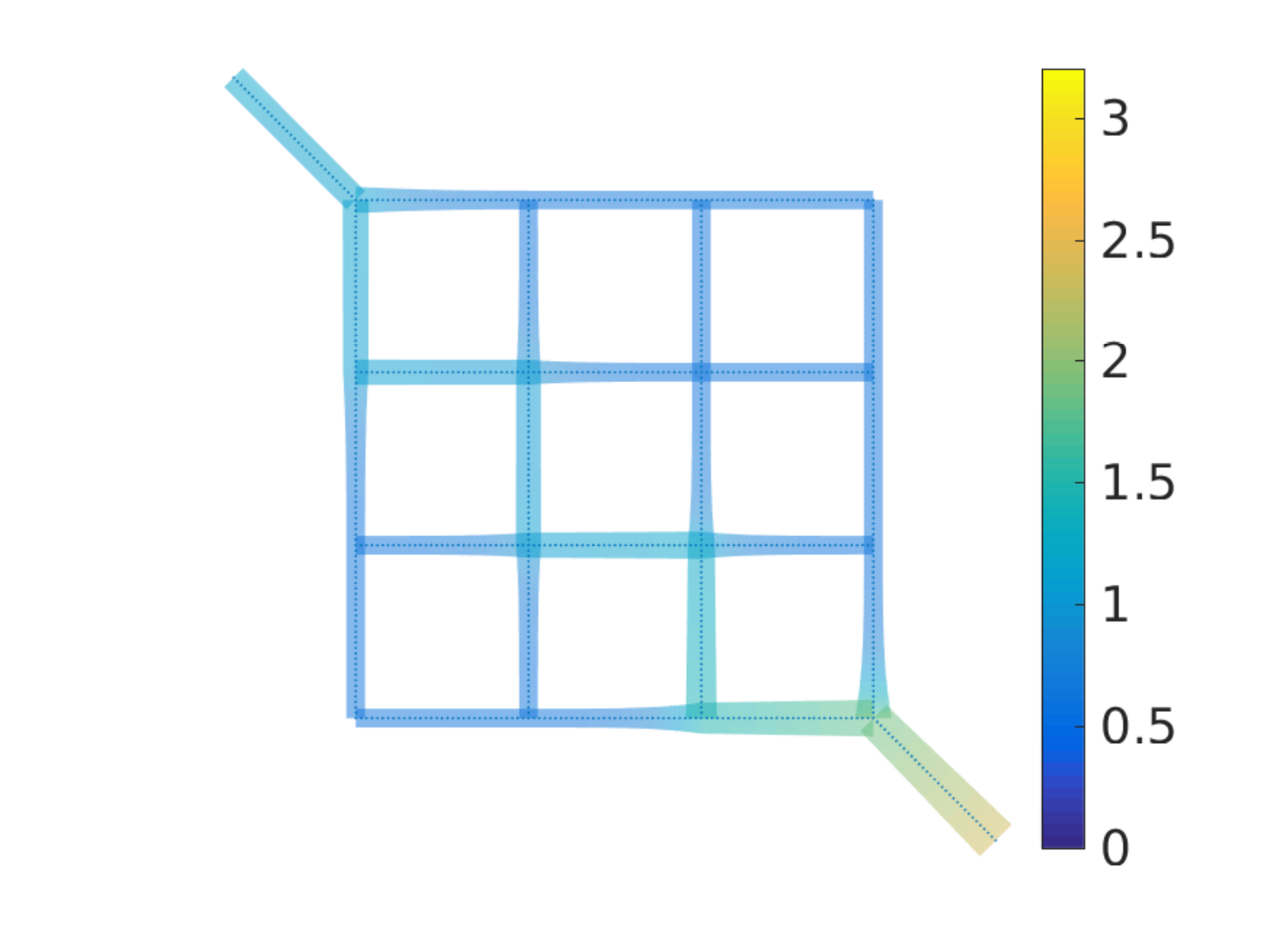}
  \vskip-2em
  \caption*{$t=10$}
  \vskip1em
  \label{fig:blockC_t=10}
\end{subfigure} \\
\begin{subfigure}{.35\textwidth}
  \centering
  \includegraphics[width=.9\linewidth]{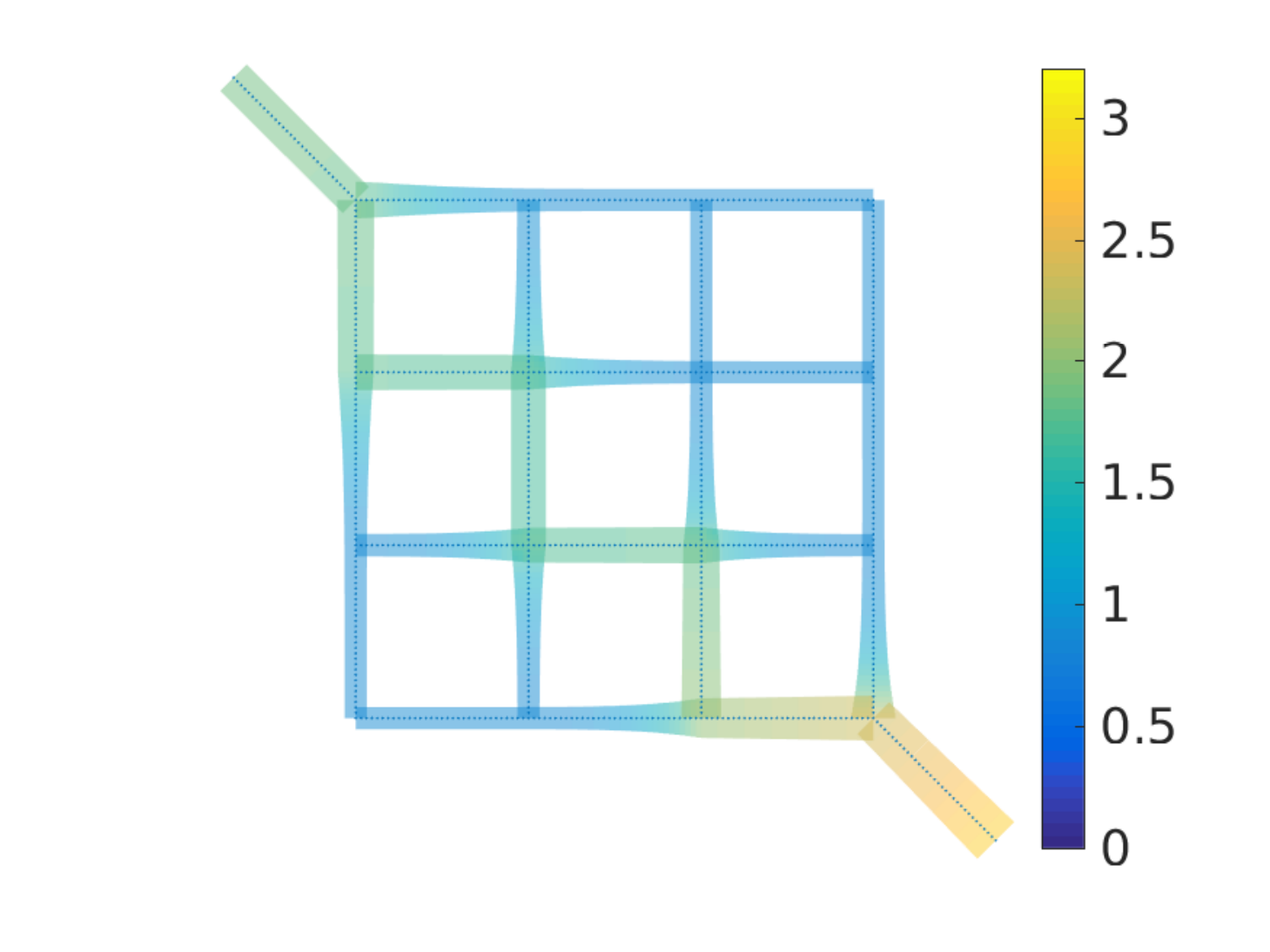}
  \vskip-2em
  \caption*{$t=20$}
  \vskip1em
  \label{fig:blockC_t=20}
\end{subfigure}
\begin{subfigure}{.35\textwidth}
  \centering
  \includegraphics[width=.9\linewidth]{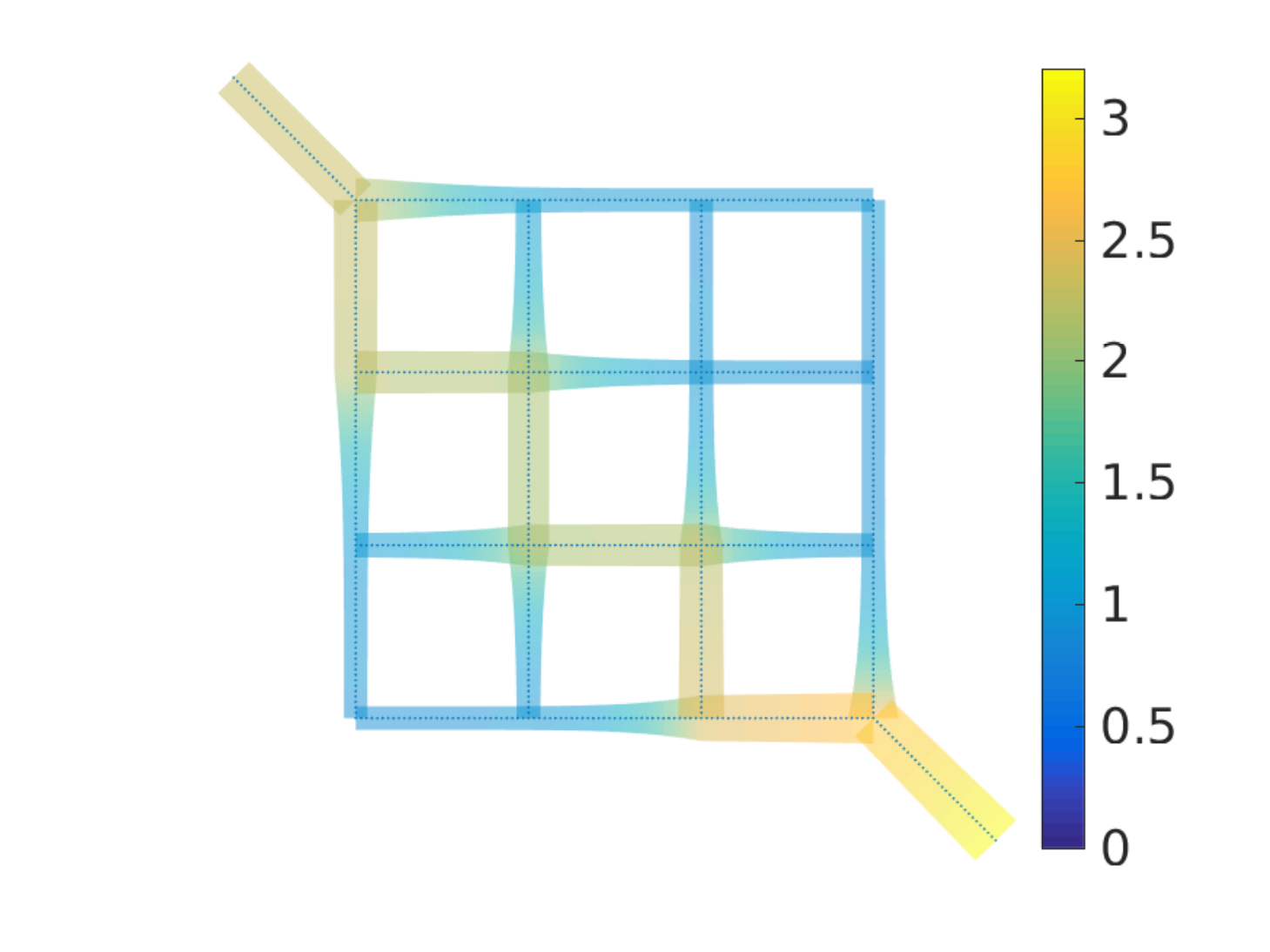}
  \vskip-2em
  \caption*{$t=30$}
  \vskip1em
  \label{fig:blockC_t=30}
\end{subfigure}
\caption{Snapshots of the concentration $c_h(t)$ for $t=0,10,20,30$.}
\label{fig:blockC}
\end{figure}

As can be seen from the images, both solution components are smooth and the system behavior seems to be computed correctly as expected.

\section{Summary and discussion} \label{sec:discussion}

In this paper, we considered a chemotaxis model on networks described by a system of partial differential-algebraic equations, which can be seen as a natural generalization of the minimal model to the network context.
By extending the results of Osaki and Yagi, we were able to establish global existence and uniqueness of solutions
based on perturbation arguments for semilinear evolution problems, conservation of mass, and positivity of the solutions. 
In addition, we derived regularity estimates for the solutions.

The arguments of the proofs were developed in a way that allowed us to prove the global existence and uniqueness of solutions also for numerical approximations obtained by a finite element discretization with mass lumping and upwinding and an implicit Euler method for time integration. 
The discrete approximations could be shown to converge to the unique global solution of the chemotaxis problem 
without artificial smoothness requirements on the solution. In addition, we could establish order optimal 
convergence rates under minimal smoothness assumptions.

The arguments used for the analysis of the finite element method presented in this paper 
may be used to improve also the convergence results of the method of Saito~\cite{Saito12},
in particular, to get rid of the strong stepsize restrictions needed there. 
Also a generalization to chemotaxis models with nonlinear coefficients seems possible to some extent. 
A class of problem that would certainly deserve further considerations, also from a numerical point of view, 
are models of haptotaxis, where no diffusion is present in the equation governing the chemoattractant.

\section*{Acknowledgements}

The authors are grateful for financial support by the German Research Foundation (DFG) via grants IRTG~1529, TRR~146, and TRR~154 and by the ``Excellence Initiative'' of the German Federal and State Governments via the Graduate School of Computational Engineering GSC~233 at Technische Universität Darmstadt.

\appendix

\section*{Appendix}

In the following sections, we present some auxiliary results that were used in our analysis and we 
provide complete proofs for some results mentioned in the paper. 

\section{Auxiliary results}

The following embedding inequalities are used several times in our proofs.
\begin{lemma} \label{lem:interpolation}
Let $(\V,\E,\ell)$ be a finite metric graph. Then 
for any $\eps>0$,
\begin{align}
\|f\|_{L^\infty(\E)} 
  &\le C_G (\|f\|_{L^1(\E)}^{1/3} \|\dx f\|_{L^2(\E)}^{2/3} + \|f\|_{L^1(\E)}) \notag \\
   &\le \eps \|\dx f\|_{L^2(\E)} + (C_G+\tfrac{4 C_G^3}{27\eps^2}) \|f\|_{L^1(\E)} 
 \label{interpolationL1}, \\
\|f\|_{L^\infty(\E)} 
   &\le C_G (\|f\|_{L^2(\E)}^{1/2} \|\dx f\|_{L^2(\E)}^{1/2} + \|f\|_{L^2(\E)}) \notag \\
   &\le \eps \|\dx f\|_{L^2(\E)} + (C_G+\tfrac{C_G^2}{4\eps}) \|f\|_{L^2(\E)}   \notag \\
    &\le \eps \|\dx f\|_{L^2(\E)} + C_G(1+\tfrac{1}{\eps}) \|f\|_{L^2(\E)} 
\label{interpolationL2}, 
\end{align}
for all $f \in H^1(\E)$ with a constant $C_G$ only depending on the geometry of the graph.
\end{lemma}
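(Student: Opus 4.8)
The plan is to reduce everything to the one-dimensional case on a single edge and then combine. First I would observe that $\|f\|_{L^\infty(\E)} = \max_{e \in \E}\|f_e\|_{L^\infty(0,\ell_e)}$ and that, trivially, $\|f_e\|_{L^1(e)} \le \|f\|_{L^1(\E)}$, $\|f_e\|_{L^2(e)} \le \|f\|_{L^2(\E)}$ and $\|\dx f_e\|_{L^2(e)} \le \|\dx f\|_{L^2(\E)}$. Hence it suffices to establish the two multiplicative Gagliardo--Nirenberg estimates (the first lines of \eqref{interpolationL1} and \eqref{interpolationL2}) on each interval $[0,\ell_e]$ with an edge constant $C_e$, and then set $C_G = \max_{e \in \E} C_e$, which is finite because the graph has finitely many edges of positive length. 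Note that the continuity of $f$ across interior vertices is not actually needed for this argument; alternatively one could exploit it to obtain a constant that does not degenerate for short edges, by propagating along a path from a global near-minimum point of $|f|$, but this refinement is not necessary for the statement as formulated.

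Second, on a fixed interval $I = [0,L]$ and for $f \in H^1(I)$, I would run the classical argument: by the integral mean value theorem there is $x_0 \in I$ with $|f(x_0)| \le L^{-1}\|f\|_{L^1(I)}$, and then for every $x \in I$ the fundamental theorem of calculus gives $|f(x)|^2 \le |f(x_0)|^2 + 2\int_I |f|\,|\dx f|$. For the $L^2$-version I bound $|f(x_0)|^2 \le L^{-1}\|f\|_{L^2(I)}^2$ by Cauchy--Schwarz and $2\int_I|f|\,|\dx f| \le 2\|f\|_{L^2(I)}\|\dx f\|_{L^2(I)}$, take the supremum over $x$, and use $\sqrt{a+b}\le\sqrt a+\sqrt b$. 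For the $L^1$-version I instead write $\int_I |f|\,|\dx f| \le \|f\|_{L^\infty(I)}^{1/2}\int_I |f|^{1/2}|\dx f| \le \|f\|_{L^\infty(I)}^{1/2}\|f\|_{L^1(I)}^{1/2}\|\dx f\|_{L^2(I)}$, which yields a quadratic-type inequality $A^2 \le L^{-2}\|f\|_{L^1(I)}^2 + 2 A^{1/2}\|f\|_{L^1(I)}^{1/2}\|\dx f\|_{L^2(I)}$ for $A := \|f\|_{L^\infty(I)}$; absorbing the $A^{1/2}$-term by Young's inequality (exponents $4$ and $4/3$) and solving for $A$ gives the claimed multiplicative bound with $C_e$ depending only on $L$.

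Third, to obtain the "absorbed" second lines of \eqref{interpolationL1} and \eqref{interpolationL2} I would simply apply Young's inequality to the mixed term. For \eqref{interpolationL1}, with exponents $3$ and $3/2$ and a free scaling parameter $s>0$ one has $C_G\|f\|_{L^1(\E)}^{1/3}\|\dx f\|_{L^2(\E)}^{2/3} \le \tfrac{C_G}{3s^3}\|f\|_{L^1(\E)} + \tfrac{2C_G}{3}s^{3/2}\|\dx f\|_{L^2(\E)}$; choosing $s$ so that $\tfrac{2C_G}{3}s^{3/2}=\eps$ makes the second coefficient $\eps$ and the first exactly $\tfrac{4C_G^3}{27\eps^2}$, and adding the leftover $C_G\|f\|_{L^1(\E)}$ term produces precisely the stated inequality. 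For \eqref{interpolationL2} the same with exponents $2$ and $2$ gives $C_G\|f\|_{L^2(\E)}^{1/2}\|\dx f\|_{L^2(\E)}^{1/2}\le \tfrac{C_G^2}{4\eps}\|f\|_{L^2(\E)}+\eps\|\dx f\|_{L^2(\E)}$, and the last, cruder line follows after trivial estimation (if necessary, enlarging $C_G$).

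The honest assessment of difficulty is that there is no real obstacle here: this is a standard one-dimensional interpolation estimate. The only points requiring a little care are (i) ensuring the constant can be taken uniform over the whole network and dependent only on its geometry --- handled by finiteness of $\E$, or optionally by the path argument using continuity --- and (ii) the bookkeeping of the explicit $\eps$-dependence of the constants, which comes directly out of the scaling parameter in Young's inequality.
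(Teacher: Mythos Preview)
Your proposal is correct and follows essentially the same route as the paper's proof: apply the one-dimensional Gagliardo--Nirenberg inequality on each edge, pass to the whole graph via the finiteness of $\E$ (the paper phrases this as ``equivalence of the $\ell^p$-norms on finite sequences''), and then use Young's inequality with a scaling parameter to obtain the absorbed forms with the explicit $\eps$-constants. Your write-up is in fact more detailed than the paper's, which simply cites the classical inequality rather than deriving it.
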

\begin{proof}
The assertions follow by applying the classical Gagliardo-Nirenberg inequality on every edge, 
summation over all edges, using the equivalence of the $l^{p}$-norms on finite sequences, 
and application of Young's inequality.
\end{proof}

We also require the following continuous and discrete version of Gronwall's inequality.
\begin{lemma}[Gronwall's lemma, see \cite{Evans10}]\label{lem:gronwall}
Assume $\eta$ is a nonnegative, absolutely continuous function and $\zeta, \phi, \psi$ are nonnegative, integrable functions on $[0,T]$ such that
\[ \eta'(t) + \zeta(t) \leq \phi(t) \eta(t) + \psi(t), \quad \text{a.a. } t\in [0,T]. \]
Then
\[ \eta(t) + \int_0^t \zeta(s)~ds \leq e^{\int_0^t \phi(s)ds}\left(\eta(0) + \int_0^t \psi(s)~ds \right), \quad \text{a.a. } t\in [0,T]. \]
\end{lemma}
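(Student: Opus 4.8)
The plan is to use the classical integrating-factor argument, paying attention only to the fact that the differential inequality holds merely almost everywhere and that the functions involved are at best absolutely continuous rather than $C^1$.

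First I would set $\Phi(t)=\int_0^t \phi(s)\,ds$. Since $\phi\ge 0$ is integrable on $[0,T]$, the function $\Phi$ is nonnegative, nondecreasing, bounded, absolutely continuous, and satisfies $\Phi(0)=0$. Consequently $e^{-\Phi}$ --- the composition of $x\mapsto e^{-x}$, which is Lipschitz on the compact range of $\Phi$, with the absolutely continuous function $\Phi$ --- is itself absolutely continuous, and hence so is the product $w(t):=e^{-\Phi(t)}\eta(t)$. Applying the product and chain rules, valid for a.a.\ $t$ for such functions, and then the hypothesis rearranged as $\eta'(t)-\phi(t)\eta(t)\le\psi(t)-\zeta(t)$, gives
\[
w'(t)=e^{-\Phi(t)}\bigl(\eta'(t)-\phi(t)\eta(t)\bigr)\le e^{-\Phi(t)}\bigl(\psi(t)-\zeta(t)\bigr)\quad\text{for a.a.\ }t\in[0,T].
\]

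Next I would integrate this inequality from $0$ to $t$ via the fundamental theorem of calculus for absolutely continuous functions, obtaining
\[
e^{-\Phi(t)}\eta(t)+\int_0^t e^{-\Phi(s)}\zeta(s)\,ds\le\eta(0)+\int_0^t e^{-\Phi(s)}\psi(s)\,ds.
\]
Because $0\le e^{-\Phi(s)}\le 1$ on $[0,T]$, the right-hand side is at most $\eta(0)+\int_0^t\psi(s)\,ds$. Multiplying through by $e^{\Phi(t)}\ge 1$ and using $e^{\Phi(t)-\Phi(s)}\ge 1$ for $0\le s\le t$ to bound the $\zeta$-integral from below by $\int_0^t\zeta(s)\,ds$ then yields
\[
\eta(t)+\int_0^t\zeta(s)\,ds\le e^{\Phi(t)}\Bigl(\eta(0)+\int_0^t\psi(s)\,ds\Bigr),
\]
which is precisely the claimed estimate since $\Phi(t)=\int_0^t\phi(s)\,ds$.

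The calculation is entirely routine and the statement is classical, so a reference to \cite{Evans10} already suffices. The only point I would flag --- and the closest thing to an obstacle --- is the measure-theoretic bookkeeping, namely verifying that $w=e^{-\Phi}\eta$ is absolutely continuous so that the product rule, chain rule, and fundamental theorem of calculus all apply despite $\phi,\psi,\zeta$ being assumed only integrable; in truth this is no real obstacle, and I would include the verification only for completeness.
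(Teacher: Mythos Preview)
Your proof is correct and is the standard integrating-factor argument. The paper itself does not give a proof of this lemma at all; it simply states the result and cites \cite{Evans10}, so there is nothing to compare against beyond noting that your argument is exactly the classical one found in such references.
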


\begin{lemma}[Discrete Gronwall's lemma, see \cite{Heywood90}]\label{lem:discrete_gronwall}
Let $\tau,B$ and $a_n,b_n,c_n,\gamma_n$ for $n=0,\ldots, N$ be nonnegative numbers such that
\begin{align*} 
a_k  + \tau \sum_{n=0}^k b_n \leq \tau\sum_{n=0}^k \gamma_n a_n + \tau \sum_{n=0}^k c_n + B, \quad k=0,\ldots,N. 
\end{align*}
Further suppose that $\tau\gamma_{n} <1$ for $n=1,\ldots,N$ and set $\sigma_{n} \coloneqq (1-\tau\gamma_n)^{-1}$. 
Then
\begin{align} \label{eq:discrete_gronwall} 
a_k + \tau \sum_{n=0}^k b_n \leq \exp(\tau\sum_{n=0}^k \sigma_n \gamma_n) \Big[ \tau \sum_{n=0}^k c_n + B\Big],\quad k=0,\ldots,N. 
\end{align}
\end{lemma}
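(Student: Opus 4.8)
The plan is to prove the estimate by peeling off the highest-index term in the weighted sum on the right, rearranging to solve for $a_k$, and then iterating the resulting one-step recursion, which converts a product of the factors $\sigma_n$ into the claimed exponential. Throughout I abbreviate the (nondecreasing) forcing term by $F_k := \tau\sum_{n=0}^k c_n + B$ and introduce the running quantity $S_k := \tau\sum_{n=0}^{k-1}\gamma_n a_n + F_k$, with the empty sum convention $S_0 = F_0$. All divisions below use $\tau\gamma_n<1$, which is exactly the stated hypothesis (and is implicit in the definition of $\sigma_n$).

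First I would split the top term $\tau\gamma_k a_k$ off the weighted sum in the hypothesis, rewriting the assumption as $a_k + \tau\sum_{n=0}^k b_n \le \tau\gamma_k a_k + S_k$. Discarding the nonnegative quantity $\tau\sum_{n=0}^k b_n$ and dividing the remaining inequality $a_k(1-\tau\gamma_k)\le S_k$ by $1-\tau\gamma_k>0$ yields the pointwise bound $a_k \le \sigma_k S_k$, where $\sigma_k=(1-\tau\gamma_k)^{-1}$. This rearrangement, done \emph{before} dividing, is what prevents $a_k$ from reappearing on the right with the wrong sign.

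The key step is a one-step recursion for $S_k$. Since $S_k - S_{k-1} = \tau\gamma_{k-1}a_{k-1} + \tau c_k$, inserting $a_{k-1}\le\sigma_{k-1}S_{k-1}$ gives $S_k \le (1+\tau\gamma_{k-1}\sigma_{k-1})S_{k-1}+\tau c_k$, and here I would use the algebraic identity $1+\tau\gamma_{k-1}\sigma_{k-1}=\sigma_{k-1}$ to reduce this to $S_k\le\sigma_{k-1}S_{k-1}+\tau c_k$. Iterating from $S_0=F_0$ — most cleanly by induction on $k$ — and bounding each factor via the elementary inequality $\sigma_j=(1-\tau\gamma_j)^{-1}\le\exp(\tau\sigma_j\gamma_j)$, which follows from $1+t\le e^t$ applied to $t=\tau\sigma_j\gamma_j$ (note $1+t=\sigma_j$), I would reach $S_k \le \exp\big(\tau\sum_{j=0}^{k-1}\sigma_j\gamma_j\big)F_k$. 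The telescoping of the forcing contributions into $F_k$ uses $F_{k-1}+\tau c_k=F_k$ together with $\exp(\cdot)\ge1$.

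Finally I would recombine the two solution quantities. Starting again from $a_k + \tau\sum_{n=0}^k b_n \le \tau\gamma_k a_k + S_k$ and using $a_k\le\sigma_k S_k$ gives $a_k + \tau\sum_{n=0}^k b_n \le (1+\tau\gamma_k\sigma_k)S_k=\sigma_k S_k$; one further application of $\sigma_k\le\exp(\tau\sigma_k\gamma_k)$ combined with the bound on $S_k$ produces exactly $a_k + \tau\sum_{n=0}^k b_n\le\exp\big(\tau\sum_{n=0}^k\sigma_n\gamma_n\big)F_k$, which is the assertion. I expect the only genuine difficulty to be the index bookkeeping — keeping the upper summation limits straight so that the exponent in $S_k$ runs to $k-1$ but the final exponent runs to $k$, the extra factor $\sigma_k$ being precisely what promotes it. The two elementary facts $1+\tau\gamma_j\sigma_j=\sigma_j$ and $\sigma_j\le e^{\tau\sigma_j\gamma_j}$ carry all the analytic weight.
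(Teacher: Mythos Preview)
The paper does not supply its own proof of this lemma; it merely states the result and cites \cite{Heywood90}. Your argument is correct and is in fact the standard proof of Heywood--Rannacher: peel off the top term $\tau\gamma_k a_k$, solve for $a_k$ to get $a_k\le\sigma_k S_k$, derive the one-step recursion $S_k\le\sigma_{k-1}S_{k-1}+\tau c_k$ via the identity $1+\tau\gamma_{k-1}\sigma_{k-1}=\sigma_{k-1}$, iterate, and convert the product $\prod\sigma_j$ into an exponential through $\sigma_j=1+\tau\sigma_j\gamma_j\le e^{\tau\sigma_j\gamma_j}$. The only nit is that the lemma as stated requires $\tau\gamma_n<1$ only for $n\ge1$, whereas your recursion also uses $a_0\le\sigma_0 S_0$ and hence $\tau\gamma_0<1$; you correctly note that this is implicit in the definition of $\sigma_0$, and indeed the conclusion already contains $\sigma_0$ in the exponent, so the stated hypothesis is really $n=0,\ldots,N$.
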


\section{Proof of Theorem~\ref{thm:local}}

For convenience of the reader, we now present a complete proof of Theorem~\ref{thm:local}.
As a first step, we consider the following linearized variational equations
\begin{alignat}{2}
\la\dt u(t),v\ra_\E + \la\alpha \dx u(t),\dx v\ra_\E &= \la\chi u(t) \dx c(t), \dx v\ra_\E \quad && \forall v \in H^1(\E), \label{eq:lin1}\\
\la\dt c(t),q\ra_\E + \la\beta \dx c(t),\dx q\ra_\E + \la\gamma c(t), q\ra_\E &= \la\delta z(t),q\ra_\E \quad && \forall q \in H^1(\E), \label{eq:lin2}
\end{alignat}
where $z \in L^\infty(0,T;L^2(\E))$ is some given function.
%
In the following lemmas, we summarize the main results about well-posedness of the corresponding initial value problem and derive a-priori estimates for its solutions. 

\begin{lemma} \label{lem:local1}
Let (A1) hold and $T>0$. Then for any $z \in L^\infty(0,T;L^2(\E))$ and any $c_0 \in L^2(\E)$, there exists a unique weak solution $c \in L^2(0,T;H^1(\E)) \cap H^1(0,T;H^1(\E)')$ of equation \eqref{eq:lin2} with initial value $c(0)=c_0$. Moreover, the solution can be bounded by
\begin{align*} 
\|c\|_{L^\infty(0,t;L^2(\E))} + \|c\|_{L^2(0,t;H^1(\E))} \le C_1 (\|c_0\|_{L^2(\E)} + t \|z\|_{L^\infty(0,t;L^2(\E))}).
\end{align*}
for a..e $0 \le t \le T$ with $C_1=C_1(\underline \beta)$. If $c_0 \in H^1(\E)$, then additionally
\begin{align*}
\|\dx c\|_{L^\infty(0,t;L^2(\E))} + \|\dt c\|_{L^2(0,t;L^2(\E))} \le C_2 (\| c_0\|_{H^1(\E)} + t^{1/2}\|z\|_{L^\infty(0,t;L^2(\E))})
\end{align*}
with constant $C_2=C_2(\underline\beta,\overline\beta,\overline\gamma,\overline\delta)$ depending only on the bounds in (A1).
\end{lemma}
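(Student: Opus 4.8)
The plan is to treat \eqref{eq:lin2} as a standard linear parabolic equation with a non-symmetric but bounded and coercive bilinear form, so that the existence and uniqueness follow from the general theory (e.g.\ the Lions--Lax--Milgram / Galerkin approach as in \cite{Evans10}), and then to extract the two quantitative bounds by testing with the natural quantities. First I would introduce the bilinear form $a(c,q) = \la\beta\dx c,\dx q\ra_\E + \la\gamma c,q\ra_\E$ on $H^1(\E)\times H^1(\E)$ and verify that it is bounded (by $\overline\beta,\overline\gamma$) and satisfies a G\aa rding inequality $a(c,c) \ge \underline\beta\|\dx c\|_{L^2(\E)}^2 \ge \underline\beta\|c\|_{H^1(\E)}^2 - \underline\beta\|c\|_{L^2(\E)}^2$, using $\gamma\ge 0$ from (A1). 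Since $\delta z \in L^\infty(0,T;L^2(\E)) \hookrightarrow L^2(0,T;H^1(\E)')$ and $c_0\in L^2(\E)$, the standard existence theorem for parabolic problems gives a unique weak solution $c \in L^2(0,T;H^1(\E))\cap H^1(0,T;H^1(\E)')$, which by the embedding recalled in Section~\ref{sec:problem} also lies in $C([0,T];L^2(\E))$ so the initial condition makes sense.

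For the first a-priori estimate, I would test \eqref{eq:lin2} with $q=c(t)$, use $\gamma\ge0$ to drop the zeroth-order term, and obtain
\begin{align*}
\tfrac12\tfrac{d}{dt}\|c(t)\|_{L^2(\E)}^2 + \underline\beta\|\dx c(t)\|_{L^2(\E)}^2 \le \overline\delta\|z(t)\|_{L^2(\E)}\|c(t)\|_{L^2(\E)}.
\end{align*}
Bounding the right-hand side by $\overline\delta\|z\|_{L^\infty(0,t;L^2(\E))}\|c(t)\|_{L^2(\E)}$ and applying Gronwall's lemma (Lemma~\ref{lem:gronwall}) with $\eta=\|c\|_{L^2(\E)}^2$, $\zeta = 2\underline\beta\|\dx c\|_{L^2(\E)}^2$, then taking square roots and using $\sqrt{a+b}\le\sqrt a+\sqrt b$, yields the asserted bound with $C_1$ depending only on $\underline\beta$ (the $\overline\delta$ can be absorbed into the constant by the convention that it is a fixed parameter, or one keeps it explicit). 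The factor $t$ multiplying $\|z\|_{L^\infty(0,t;L^2(\E))}$ arises from integrating the constant-in-time bound on $z$ over $[0,t]$.

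For the second estimate, under the additional assumption $c_0\in H^1(\E)$, I would test \eqref{eq:lin2} with $q=\dt c(t)$, giving
\begin{align*}
\|\dt c(t)\|_{L^2(\E)}^2 + \tfrac12\tfrac{d}{dt}\big(\|\beta^{1/2}\dx c(t)\|_{L^2(\E)}^2 + \|\gamma^{1/2}c(t)\|_{L^2(\E)}^2\big) = \la\delta z(t),\dt c(t)\ra_\E,
\end{align*}
then estimate the right-hand side by $\overline\delta\|z(t)\|_{L^2(\E)}\|\dt c(t)\|_{L^2(\E)}\le \tfrac12\|\dt c(t)\|_{L^2(\E)}^2 + \tfrac{\overline\delta^2}{2}\|z(t)\|_{L^2(\E)}^2$ via Young's inequality, absorb the $\dt c$ term, and integrate in time from $0$ to $t$, using $\|\beta^{1/2}\dx c_0\|_{L^2(\E)}^2 + \|\gamma^{1/2}c_0\|_{L^2(\E)}^2 \le (\overline\beta+\overline\gamma)\|c_0\|_{H^1(\E)}^2$ for the initial term and $\underline\beta\|\dx c(t)\|_{L^2(\E)}^2 \le \|\beta^{1/2}\dx c(t)\|_{L^2(\E)}^2$ on the left. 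This produces $\|\dx c\|_{L^\infty(0,t;L^2(\E))}^2 + \|\dt c\|_{L^2(0,t;L^2(\E))}^2 \le C(\|c_0\|_{H^1(\E)}^2 + t\|z\|_{L^\infty(0,t;L^2(\E))}^2)$ with $C$ depending on $\underline\beta,\overline\beta,\overline\gamma,\overline\delta$; taking square roots gives the stated form with the $t^{1/2}$ factor. The only mild subtlety — and the one point that needs a word of care rather than a genuine obstacle — is that testing with $\dt c$ is not legitimate for a merely weak solution; this is handled in the usual way by first carrying out the estimate on the Galerkin approximations (where $\dt c_m$ is an admissible test function) and then passing to the limit using weak lower semicontinuity of the norms, which also retroactively establishes the extra regularity $\dt c\in L^2(0,T;L^2(\E))$ and $\dx c\in L^\infty(0,T;L^2(\E))$.
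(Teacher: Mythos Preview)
Your proposal is correct and follows essentially the same approach as the paper: existence via Galerkin/standard linear parabolic theory, the first estimate by testing with $q=c(t)$, and the second by testing with $q=\dt c(t)$ (justified, as you note, at the Galerkin level). One detail you gloss over is how to obtain a constant $C_1$ that depends only on $\underline\beta$ and not on $t$: applying Young's inequality with a fixed parameter and then Gronwall produces an $e^{Ct}$ factor, whereas the paper uses Young's inequality with the $t$-dependent parameter $\eps = 1/t$, so that $e^{\int_0^t \eps\,ds} = e$ is a universal constant and the $t$-dependence appears only through $\int_0^t \|z\|^2 \le t\|z\|_{L^\infty(0,t;L^2)}^2$, yielding exactly the stated $t$ factor after taking square roots.
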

\begin{proof}
Existence of a unique weak solution follows by Galerkin approximation and standard arguments; see \cite{DL5,Evans10} for details. To keep track of the constants, we give a short proof of the a-priori estimates. 
Testing the variational equation \eqref{eq:lin2} with $q=c(t)$ yields 
\begin{align*}
\frac{1}{2} \frac{d}{dt} \|c(t)\|_{L^2(\E)}^2 + \underline \beta \|\dx c(t)\|^2_{L^2(\E)} 
&\le \overline \delta \|c(t)\|_{L^2(\E)} \|z(t)\|_{L^2(\E)} \\
&\le \frac{\eps}{2} \|c(t)\|^2_{L^2(\E)}  + \frac{\overline \delta^2}{2\eps} \|z(t)\|_{L^2(\E)}^2. 
\end{align*}
Here we used the bounds for the coefficients for the first step and Young's inequality with $\eps>0$ for the second. 
An application of Lemma~\ref{lem:gronwall} with $\eps=1/t$ further yields 
\begin{align*}
\|c(t)\|^2_{L^2(\E)} + \underline \beta \|\dx c\|^2_{L^2(0,t;L^2(\E))} 
&\le e^1 \left(\|c_0\|_{L^2(\E)}^2 + t \|z\|_{L^2(0,t;L^2(\E))}^2\right). 
\end{align*}
The first estimate then follows by noting that $\|z\|_{L^2(0,t;L^2(\E))}^2 \le t \|z\|_{L^\infty(0,t;L^2(\E))}^2$, which follows by the Cauchy-Schwarz inequality, and some further elementary computations. 
For the second bound of the lemma, we test \eqref{eq:lin2} with $q = \dt c(t)$ which yields 
\begin{align*}
&\|\dt c(t)\|^2_{L^2(\E)} + \frac{1}{2}\frac{d}{dt} \|\beta^{1/2} \dx c(t)\|^2_{L^2(\E)} + \frac{1}{2} \frac{d}{dt} \|\gamma^{1/2} c(t)\|^2_{L^2(\E)} \\
&\le \overline \delta \|z(t)\|_{L^2(\E)} \|\dt c(t)\|_{L^2(\E)}
\le \frac{\overline \delta^2}{2} \|z(t)\|_{L^2(\E)}^2 + \frac{1}{2} \|\dt c(t)\|_{L^2(\E)}^2.
\end{align*}
By rearranging the terms, integration in time, and using assumption (A1),
we obtain
\begin{align*}
\|\dt c\|^2_{L^2(0,t;L^2(\E))} + \underline \beta \|\dx c(t)\|^2_{L^2(\E)} 
\le \overline \beta \|\dx c_0\|^2_{L^2(\E)} + \overline \gamma \|c_0\|_{L^2(\E)}^2 + \overline \delta^2 \|z\|_{L^2(0,t;L^2(\E))}^2. 
\end{align*}
This yields the second estimate and completes the proof of the lemma.
\end{proof}

As a next step, we now derive a-priori estimates for the second solution component.
\begin{lemma} \label{lem:local2}
Let (A1) hold and $T>0$. Then for any $u_0 \in L^2(\E)$ and $c \in L^\infty(0,T;H^1(\E))$, there exists a unique weak solution $u \in L^2(0,T;H^1(\E)) \cap H^1(0,T;H^1(\E)')$ of \eqref{eq:lin1} with initial value $u(0)=u_0$. 
Moreover, the solution can be bounded by
\begin{align*} 
\|u\|_{L^\infty(0,T;L^2(\E))}^2 + \underline \alpha \|u\|_{L^2(0,T;H^1(\E)}^2 \le e^{C_3 T} \|u_0\|_{L^2(\E)}^2
\end{align*}
with constant $C_3=C_3(\overline \chi,\underline \alpha,\overline C,C_G)$ that only depends on the graph, on the bounds in assumption (A1), and on the norm $\overline C:=\|\dx c\|_{L^\infty(0,T;L^2(\E))}$ of the data.
\end{lemma}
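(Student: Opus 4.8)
The plan is to treat \eqref{eq:lin1} as a linear parabolic equation in $u$ alone, in which $\dx c$ enters merely as a prescribed, time-dependent coefficient of a first-order term; note that $c \in L^\infty(0,T;H^1(\E))$ guarantees $\|\dx c(t)\|_{L^2(\E)} \le \overline C$ for a.a. $t \in [0,T]$. As in Lemma~\ref{lem:local1}, I would first obtain existence and uniqueness of a weak solution $u \in W(0,T)$ by Galerkin approximation, and then derive the quantitative bound by an energy argument. The only delicate point is the convective term $\la \chi u \dx c, \dx v\ra_\E$, which will be kept under control by the one-dimensional embedding $H^1(\E) \hookrightarrow L^\infty(\E)$ from Lemma~\ref{lem:interpolation}.

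For well-posedness I would introduce the time-dependent bilinear form $a(w,v;t) := \la \alpha \dx w, \dx v\ra_\E - \la \chi w \dx c(t), \dx v\ra_\E$ on $H^1(\E) \times H^1(\E)$. Estimating $|\la \chi w \dx c(t), \dx v\ra_\E| \le \overline\chi\,\overline C\,\|w\|_{L^\infty(\E)}\|\dx v\|_{L^2(\E)}$ and inserting the interpolation inequality \eqref{interpolationL2} for $\|w\|_{L^\infty(\E)}$, one sees that $a(\cdot,\cdot;t)$ is bounded on $H^1(\E)$ uniformly in $t$; choosing the splitting parameter in \eqref{interpolationL2} small enough and using Young's inequality yields the G\aa rding-type estimate $a(w,w;t) \ge \tfrac{\underline\alpha}{2}\|\dx w\|_{L^2(\E)}^2 - \lambda\|w\|_{L^2(\E)}^2$ with $\lambda = \lambda(\underline\alpha,\overline\chi,\overline C,C_G)$. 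Since $t \mapsto a(w,v;t)$ is measurable, the standard theory for linear parabolic problems (Galerkin approximation, uniform energy bounds, weak compactness, passage to the limit; see \cite{DL5,Evans10}) then produces a unique $u \in L^2(0,T;H^1(\E)) \cap H^1(0,T;H^1(\E)')$ solving \eqref{eq:lin1} with $u(0) = u_0$.

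For the a-priori bound I would test \eqref{eq:lin1} with $v = u(t)$, which is admissible because $u \in W(0,T)$, so that $\la \dt u(t), u(t)\ra_\E = \tfrac12\tfrac{d}{dt}\|u(t)\|_{L^2(\E)}^2$, and bound the diffusion term below by $\la \alpha \dx u(t), \dx u(t)\ra_\E \ge \underline\alpha \|\dx u(t)\|_{L^2(\E)}^2$. This gives
\[
\tfrac12\tfrac{d}{dt}\|u(t)\|_{L^2(\E)}^2 + \underline\alpha\|\dx u(t)\|_{L^2(\E)}^2 \le \overline\chi\,\overline C\,\|u(t)\|_{L^\infty(\E)}\|\dx u(t)\|_{L^2(\E)}.
\]
Applying \eqref{interpolationL2} once more for $\|u(t)\|_{L^\infty(\E)}$ and Young's inequality, with the free parameters chosen so that the critical $\|\dx u(t)\|_{L^2(\E)}^2$-contribution on the right is absorbed into the left-hand side, and then adding a suitable multiple of $\|u(t)\|_{L^2(\E)}^2$ to both sides, one arrives at a differential inequality of the form $\tfrac{d}{dt}\|u(t)\|_{L^2(\E)}^2 + \underline\alpha\,\|u(t)\|_{H^1(\E)}^2 \le 2C_3\,\|u(t)\|_{L^2(\E)}^2$ with $C_3 = C_3(\underline\alpha,\overline\chi,\overline C,C_G)$. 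An application of Gronwall's lemma (Lemma~\ref{lem:gronwall}) with $\eta(t) = \|u(t)\|_{L^2(\E)}^2$, $\zeta(t) = \underline\alpha\|u(t)\|_{H^1(\E)}^2$, $\phi \equiv 2C_3$, $\psi \equiv 0$, followed by taking the supremum over $t \in [0,T]$ and absorbing the resulting numerical factor into the exponent, then yields the claimed estimate. I expect the main obstacle to be exactly this critical convective term: the point is that, because $\E$ is one-dimensional, $H^1(\E) \hookrightarrow L^\infty(\E)$ holds, and the corresponding interpolation inequality lets one split $\|u\|_{L^\infty(\E)}\|\dx u\|_{L^2(\E)}$ so that its "diffusive" part is absorbed by $\underline\alpha\|\dx u\|_{L^2(\E)}^2$, leaving only a harmless multiple of $\|u(t)\|_{L^2(\E)}^2$; everything else is routine.
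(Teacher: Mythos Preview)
Your proposal is correct and follows essentially the same route as the paper: test \eqref{eq:lin1} with $v=u(t)$, control the convective term via the one-dimensional embedding \eqref{interpolationL2}, absorb the gradient contribution into $\underline\alpha\|\dx u\|_{L^2(\E)}^2$, and conclude by Gronwall. Your treatment is in fact slightly more explicit than the paper's, which skips the G\aa rding verification and the adjustment from $\|\dx u\|_{L^2}^2$ to the full $H^1$-norm.
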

\begin{proof}
Testing \eqref{eq:lin1} with $v=u(t)$, we obtain via assumption (A1) that 
\begin{align*}
\frac{1}{2} \frac{d}{dt} &\|u(t)\|^2_{L^2(\E)} + \underline \alpha \|\dx u(t)\|^2_{L^2(\E)}
 \le \overline \chi \|u(t)\|_{L^\infty(\E)} \|\dx c(t)\|_{L^2(\E)} \|\dx u(t)\|_{L^2(\E)} \\
&\le \overline \chi (\eps \|\dx u(t)\|_{L^2(\E)} + (C_G + \tfrac{C_G^2}{4\eps}) \|u(t)\|_{L^2(\E)})
  \overline C \|\dx u(t)\|_{L^2(\E)}=(*),
\end{align*}
where we used Lemma~\ref{lem:interpolation} to estimate $\|u(t)\|_{L^\infty(\E)}$.
Choosing $\eps = \underline \alpha/(4\overline \chi \overline C)$ and applying Young's inequality yields
\begin{align*}
(*) &\le\tfrac{\underline \alpha}{2} \|\dx u(t)\|^2_{L^2(\E)} + \tfrac{C}{2} \|u(t)\|_{L^2(\E)}^2
\end{align*}
with a constant $C=C(\overline \chi,\underline \alpha,\overline C,C_G)$.  
Inserting this expression in the above estimate, rearranging some of the terms, and applying Lemma~\ref{lem:gronwall} further yields
\begin{align*}
\|u(t)\|_{L^2(\E)}^2 + \underline \alpha \|\dx u\|^2_{L^2(0,t;L^2(\E))} 
\le e^{ C t} \|u_0\|_{L^2(\E)}^2. 
\end{align*}
The result then follows by taking the maximum over $t \in [0,T]$ on both sides.
\end{proof}

By combination of the two previous results, we directly obtain the following assertion.
\begin{lemma} \label{lem:local3}
Let (A1) hold and $T>0$. Then for any $u_0 \in L^2(\E)$, $c_0 \in H^1(\E)$, and for any $z \in L^\infty(0,T;L^2(\E))$, the linearized system \eqref{eq:lin1}--\eqref{eq:lin2} 
has a unique weak solution $u \in L^2(0,T;H^1(\E)) \cap H^1(0,T;H^1(\E)')$ and $c \in L^\infty(0,T;H^1(\E)) \cup H^1(0,T;L^2(\E))$ with initial values $u(0)=u_0$ and $c(0)=c_0$. 
Moreover, there holds
\begin{align*}
\|u\|_{L^\infty(0,T;L^2(\E))} \le e^{C_4 T} \|u_0\|_{L^2(\E)} 
\end{align*}
with $C_4$ depending only on the bounds in assumption (A1), on the geometry of the graph, and monotonically on $\|c_0\|_{H^1(\E)}$, $\|z\|_{L^\infty(0,T;L^2(\E))}$, and the time horizon $T$. 
\end{lemma}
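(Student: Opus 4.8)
The plan is to exploit the triangular structure of the linearized system \eqref{eq:lin1}--\eqref{eq:lin2}: the equation \eqref{eq:lin2} for $c$ involves only the given datum $z$ and not the unknown $u$, so it can be solved first, and the resulting function $c$ then enters \eqref{eq:lin1} merely as a (time-dependent) coefficient. Accordingly, I would first apply Lemma~\ref{lem:local1} with the given $c_0 \in H^1(\E)$ and $z \in L^\infty(0,T;L^2(\E))$. This yields a unique weak solution $c \in L^2(0,T;H^1(\E)) \cap H^1(0,T;H^1(\E)')$ of \eqref{eq:lin2} with $c(0)=c_0$, and since $c_0 \in H^1(\E)$ the second estimate of that lemma upgrades the regularity to $c \in L^\infty(0,T;H^1(\E)) \cap H^1(0,T;L^2(\E))$ together with the quantitative bound
\begin{align*}
\overline C := \|\dx c\|_{L^\infty(0,T;L^2(\E))} \le C_2 \big(\|c_0\|_{H^1(\E)} + T^{1/2} \|z\|_{L^\infty(0,T;L^2(\E))}\big),
\end{align*}
where $\overline C$ depends monotonically on $\|c_0\|_{H^1(\E)}$, $\|z\|_{L^\infty(0,T;L^2(\E))}$, and $T$.

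Next I would feed this $c$ into \eqref{eq:lin1} and invoke Lemma~\ref{lem:local2}, which applies precisely because $c \in L^\infty(0,T;H^1(\E))$. It delivers a unique weak solution $u \in L^2(0,T;H^1(\E)) \cap H^1(0,T;H^1(\E)')$ of \eqref{eq:lin1} with $u(0)=u_0$ and the estimate
\begin{align*}
\|u\|_{L^\infty(0,T;L^2(\E))}^2 + \underline\alpha\,\|u\|_{L^2(0,T;H^1(\E))}^2 \le e^{C_3 T}\,\|u_0\|_{L^2(\E)}^2,
\end{align*}
with $C_3 = C_3(\overline\chi,\underline\alpha,\overline C,C_G)$. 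Since $C_3$ depends on $\overline C$, and $\overline C$ depends monotonically on $\|c_0\|_{H^1(\E)}$, $\|z\|_{L^\infty(0,T;L^2(\E))}$, and $T$ as recorded above, setting $C_4 := C_3/2$ produces a constant with exactly the dependence claimed in the statement and gives $\|u\|_{L^\infty(0,T;L^2(\E))} \le e^{C_4 T}\|u_0\|_{L^2(\E)}$. Uniqueness of the pair $(u,c)$ is immediate from linearity: $c$ is the unique solution of the decoupled problem \eqref{eq:lin2}, and for this fixed $c$, $u$ is the unique solution of the linear problem \eqref{eq:lin1}.

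There is no genuine obstacle here; the lemma is a bookkeeping combination of the two preceding results, and the only point requiring mild care is to check that the composite constant $C_4$ retains monotone dependence on the data — which holds because each intermediate constant ($C_2$, then $\overline C$, then $C_3$) is monotone in the relevant quantities and a composition of monotone functions is monotone. (I would also read the function-space assertion ``$c \in L^\infty(0,T;H^1(\E)) \cup H^1(0,T;L^2(\E))$'' in the statement as an intersection, which is exactly what Lemma~\ref{lem:local1} provides.)
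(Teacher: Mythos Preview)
Your proposal is correct and matches the paper's approach exactly: the paper simply states that the lemma follows ``by combination of the two previous results'' (Lemma~\ref{lem:local1} and Lemma~\ref{lem:local2}) without spelling out the details, which is precisely the triangular decoupling argument you give. Your observation that the $\cup$ in the statement should be read as $\cap$ is also correct.
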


Based on the previous results, we can define a mapping
\begin{align} \label{eq:fixedpointmap}
\Phi_T : X_T \to X_T, \qquad z \mapsto u,  
\end{align}
where $X_T=L^{\infty}(0,T;L^2(\E))$
with norm $\|v\|_{X_T}=\|v\|_{L^{\infty}(0,T;L^2(\E))}$ 
and where $u$ is the first component of the solution of \eqref{eq:lin1}--\eqref{eq:lin2} 
with initial values $u_0 \in L^2(\E)$ and $c_0 \in H^1(\E)$.
By application of Lemma~\ref{lem:local3}, we can immediately deduce the following result.
\begin{lemma} \label{lem:local4}
For any $T>0$, $u_0 \in H^1(\E)$, and $c_0 \in L^2(\E)$, 
the mapping $\Phi_T$ is well defined on $X_T = L^{\infty}(0,T;L^2(\E))$. 
Moreover, for any $R > \|u_0\|_{L^2(\E)}$ there exists $T(R)>0$ such that for all $0 < T \le T(R)$,
$\Phi_T$ maps $B_{T,R}= \{ z \in X_T : \|z\|_{X_T} \le R\}$ into itself.
\end{lemma}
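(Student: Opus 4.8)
The plan is to establish Lemma~\ref{lem:local4} as a direct corollary of the a-priori bound in Lemma~\ref{lem:local3}. First I would observe that well-definedness of $\Phi_T$ on $X_T = L^\infty(0,T;L^2(\E))$ is immediate: given $z \in X_T$ and the fixed initial data $u_0 \in L^2(\E)$, $c_0 \in H^1(\E)$, Lemma~\ref{lem:local3} guarantees a unique weak solution $(u,c)$ of the linearized system \eqref{eq:lin1}--\eqref{eq:lin2}, and its first component $u$ lies in $L^2(0,T;H^1(\E)) \cap H^1(0,T;H^1(\E)') \hookrightarrow C([0,T];L^2(\E)) \subset X_T$. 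Hence $\Phi_T z := u$ is a well-defined element of $X_T$.

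Next I would prove the self-mapping property. Fix $R > \|u_0\|_{L^2(\E)}$ and restrict attention to $z \in B_{T,R}$, so that $\|z\|_{X_T} \le R$. Applying the estimate of Lemma~\ref{lem:local3}, we obtain
\begin{align*}
\|\Phi_T z\|_{X_T} = \|u\|_{L^\infty(0,T;L^2(\E))} \le e^{C_4 T} \|u_0\|_{L^2(\E)},
\end{align*}
where the crucial point is that the constant $C_4$ depends on $\|z\|_{L^\infty(0,T;L^2(\E))}$ only monotonically, hence can be bounded by a constant $C_4 = C_4(R)$ depending on $R$ (and on the data $c_0$, the coefficient bounds, and the graph), uniformly over $z \in B_{T,R}$ and over $T \le 1$, say. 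Since $\|u_0\|_{L^2(\E)} < R$ strictly, we may choose $T(R) > 0$ small enough that $e^{C_4(R) T(R)} \|u_0\|_{L^2(\E)} \le R$; then for every $0 < T \le T(R)$ the same inequality holds, so $\|\Phi_T z\|_{X_T} \le R$, i.e. $\Phi_T$ maps $B_{T,R}$ into itself.

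The argument is essentially a soft consequence of the quantitative estimate already proven, so there is no serious obstacle; the only point requiring a little care is the uniformity of $C_4$ over the ball $B_{T,R}$ and over small $T$. This is precisely where the monotone dependence of $C_4$ on $\|z\|_{L^\infty(0,T;L^2(\E))}$ (rather than some more delicate dependence) is used, together with the fact that shrinking $T$ does not increase any of the norms involved. One should also note in passing that $\Phi_T$ maps into the bounded set $B_{T,R}$ as a subset of $X_T$, which will later be combined with a contraction estimate — obtained by a similar energy argument applied to the difference of two solutions — to invoke Banach's fixed-point theorem and conclude the local existence result of Theorem~\ref{thm:local}.
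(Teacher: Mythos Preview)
Your proof is correct and follows exactly the approach indicated in the paper, whose own proof consists of the single sentence ``The assertion follows directly from the previous lemmas.'' You have simply spelled out the details: well-definedness from Lemma~\ref{lem:local3}, and the self-mapping property by combining the bound $\|u\|_{L^\infty(0,T;L^2(\E))} \le e^{C_4 T}\|u_0\|_{L^2(\E)}$ with the monotone dependence of $C_4$ on $\|z\|_{X_T}$ and $T$ to choose $T(R)$ small enough.
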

\begin{proof}
The assertion follows directly from the previous lemmas. 
\end{proof}

With similar arguments as employed for the proof of the assertions stated in the previous section, 
we can show that $\Phi_T$ is Lipschitz continuous on $X_T=L^{\infty}(0,T;L^2(\E))$.
\begin{lemma} \label{lem:local5}
Let (A1) hold and let $u_0$, $c_0$ and $T(R)$ be given as in Lemma~\ref{lem:local4}.
Then for any $0 < T \le T(R)$, we have
\begin{align*}
\|\Phi_{T}(z) - \Phi_{T}(\widehat z) \|_{X_{T}} \le L(T) \|z - \widehat z\|_{X_{T}} \qquad \forall z,\widehat z \in B_{T,R}
\end{align*}
with Lipschitz constant $L(T)=C_5 T e^{C_6 T}$ and constants $C_5,C_6$ independent of $T$.
\end{lemma}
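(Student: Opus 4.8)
The plan is to prove the contraction-type estimate by exploiting the linearity of the two subproblems \eqref{eq:lin1}--\eqref{eq:lin2} in the argument $z$. Fix $z,\widehat z\in B_{T,R}$ (the ball from Lemma~\ref{lem:local4}), let $c,\widehat c$ be the corresponding solutions of \eqref{eq:lin2} sharing the initial value $c_0$, and let $u=\Phi_T(z)$, $\widehat u=\Phi_T(\widehat z)$ be the corresponding solutions of \eqref{eq:lin1} sharing the initial value $u_0$. Writing $w=c-\widehat c$, $p=u-\widehat u$, and $\zeta=z-\widehat z$, both differences carry homogeneous initial data, $w(0)=0$ and $p(0)=0$, which will be essential at the end.

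First I would analyse the difference of the chemoattractant equations. By linearity of \eqref{eq:lin2}, the function $w$ is the weak solution of that equation with right-hand side $\delta\zeta$ and zero initial value. Since $w(0)=0\in H^1(\E)$, both bounds of Lemma~\ref{lem:local1} apply with $c_0$ replaced by $0$ and $z$ by $\zeta$, giving
\[
\|w\|_{L^2(0,T;H^1(\E))}\le C_1 T\,\|\zeta\|_{X_T},\qquad
\|\dx w\|_{L^\infty(0,T;L^2(\E))}+\|\dt w\|_{L^2(0,T;L^2(\E))}\le C_2 T^{1/2}\,\|\zeta\|_{X_T}.
\]
In particular the gradient of $w$ is controlled in $L^\infty(0,T;L^2(\E))$ by $\|\zeta\|_{X_T}$ times a factor that vanishes as $T\downarrow 0$.

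Next I would analyse the difference of the population equations. Subtracting the two instances of \eqref{eq:lin1} and splitting the convective term as $\chi u\,\dx c-\chi\widehat u\,\dx\widehat c=\chi p\,\dx c+\chi\widehat u\,\dx w$, one sees that $p$ satisfies
\[
\la\dt p(t),v\ra_\E+\la\alpha\dx p(t),\dx v\ra_\E=\la\chi p(t)\dx c(t),\dx v\ra_\E+\la\chi\widehat u(t)\dx w(t),\dx v\ra_\E
\]
for all $v\in H^1(\E)$. Testing with $v=p(t)$, estimating $\|p(t)\|_{L^\infty(\E)}$ by the interpolation inequality \eqref{interpolationL2}, using the a-priori bound $\|\dx c\|_{L^\infty(0,T;L^2(\E))}\le\overline C$ from Lemma~\ref{lem:local1} (valid since $z\in B_{T,R}$ and $c_0\in H^1(\E)$), and absorbing the resulting $\|\dx p(t)\|_{L^2(\E)}^2$-terms into the left-hand side via Young's inequality, one arrives at a differential inequality of the form
\[
\tfrac{d}{dt}\|p(t)\|_{L^2(\E)}^2+\underline\alpha\|\dx p(t)\|_{L^2(\E)}^2\le C\,\|p(t)\|_{L^2(\E)}^2+C\,\|\widehat u(t)\|_{L^\infty(\E)}^2\,\|\dx w(t)\|_{L^2(\E)}^2,
\]
where $C$ depends only on the bounds in (A1), on $C_G$, and on $\overline C$ (hence on $R$ and $T$). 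Applying Gronwall's lemma (Lemma~\ref{lem:gronwall}) on $[0,t]$ and using $p(0)=0$ gives
\[
\|p(t)\|_{L^2(\E)}^2\le C\,e^{Ct}\int_0^t\|\widehat u(s)\|_{L^\infty(\E)}^2\,\|\dx w(s)\|_{L^2(\E)}^2\,ds.
\]
To close the argument I would bound $\|\dx w(s)\|_{L^2(\E)}$ by its uniform-in-time bound from Step~1, pull it out of the integral, and control the remaining $\int_0^T\|\widehat u(s)\|_{L^\infty(\E)}^2\,ds$ by $C\,\|\widehat u\|_{L^2(0,T;H^1(\E))}^2$ via the embedding $H^1(\E)\hookrightarrow L^\infty(\E)$; by Lemmas~\ref{lem:local2}--\ref{lem:local3} the latter is controlled solely in terms of $\|u_0\|_{L^2(\E)}$ and $T$. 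Collecting the powers of $T$ and taking the maximum over $t\in[0,T]$ then yields $\|\Phi_T(z)-\Phi_T(\widehat z)\|_{X_T}=\|p\|_{L^\infty(0,T;L^2(\E))}\le L(T)\,\|\zeta\|_{X_T}$ with $L(T)$ of the claimed form $C_5 T e^{C_6 T}$ (in any case $L(T)\to 0$ as $T\downarrow 0$), and $C_5,C_6$ independent of $T$.

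The two energy estimates are entirely routine; the genuinely delicate point is the nonlinear coupling term $\chi\widehat u\,\dx w$. Since $u_0$ is only assumed to lie in $L^2(\E)$, the function $\widehat u$ belongs merely to $L^2(0,T;H^1(\E))$ and not to $L^\infty(0,T;H^1(\E))$, so $\|\widehat u(t)\|_{L^\infty(\E)}$ is not bounded uniformly in $t$; the estimate must therefore be arranged so that the full gradient bound for $w$—available in $L^\infty(0,T;L^2(\E))$ precisely because the shared initial value $c_0$ lies in $H^1(\E)$—is paired with the merely $L^2$-in-time bound for $\widehat u$. Extracting from this a Lipschitz factor that genuinely vanishes as $T\downarrow 0$ is where the careful bookkeeping of the $T$-dependence in Lemma~\ref{lem:local1} (notably the choice $\eps=1/t$ in its proof) becomes indispensable.
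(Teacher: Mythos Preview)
Your proof follows essentially the same route as the paper's: both derive the bound $\|\dx(c-\widehat c)\|_{L^\infty(0,T;L^2(\E))}\le C T^{1/2}\|z-\widehat z\|_{X_T}$ from Lemma~\ref{lem:local1} with zero initial data, split the convective term as $\chi p\,\dx c+\chi\widehat u\,\dx w$, test with $p$, absorb the first piece as in Lemma~\ref{lem:local2}, and apply Gronwall. Your discussion of the pairing of the $L^\infty$-in-time bound for $\dx w$ with the $L^2$-in-time bound for $\widehat u$ makes explicit a point the paper leaves implicit in its treatment of the term $(ii)$; the resulting Lipschitz factor may come out as $T^{1/2}$ rather than $T$, but as you note this is immaterial for the contraction argument.
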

\begin{proof}
Let $(u,c)$ and $(\widehat u,\widehat c)$ denote the solutions of \eqref{eq:lin1}--\eqref{eq:lin2} 
with the same initial values $u_0 \in L^2(\E)$ and $c_0 \in H^1(\E)$ but with different data $z$ and $\widehat z \in B_{T,R}$.
Then 
\begin{align*}
\la\dt c(t) - \dt \widehat c(t),q\ra_\E + \la\beta \dx c(t) - \beta \dx \widehat c(t),\dx q\ra_\E + \la\gamma c(t) - \gamma \widehat c(t),q\ra_\E &= \la\delta z(t)-\delta \widehat z(t),q\ra_{\E}  
\end{align*}
for all suitable test functions $q$ and $0 \le t \le T$. In addition, $c(0)-\widehat c(0)=0$. 
With similar arguments as in the proof of Lemma~\ref{lem:local1}, one can see that
\begin{align} \label{eq:aux}
\|\dx c - \dx \widehat c\|_{X_T} \le \overline \delta T^{1/2} \|z-\widehat z\|_{X_T}. 
\end{align}
Next observe that $u - \widehat u$ satisfies $u(0)-\widehat u(0)=0$ and, in addition, there holds
\begin{align*}
\la\dt u(t) - \widehat u(t),v\ra_\E &+ \la\alpha \dx u(t)-\alpha \widehat u(t),\dx v\ra_\E \\
&= \la\chi (u(t) - \widehat u(t)) \dx c(t),\dx v\ra_\E + \la\chi \widehat u(t) (\dx c(t) - \dx \widehat c(t)), \dx v\ra_\E
\end{align*}
for any suitable test function $v$ and a.a. $0 \le t \le T$. 
By choosing $v=u(t) - \widehat u(t)$ and applying some elementary manipulations, one can deduce that 
\begin{align*}
\frac{1}{2} \frac{d}{dt} &\|u(t) - \widehat u(t)\|^2_{L^2(\E)} + \underline \alpha \|\dx u(t) - \dx \widehat u(t)\|_{L^2(\E)}^2 \\
&\le \overline \chi \|u(t) - \widehat u(t) \|_{L^\infty(\E)} \|\dx c(t)\|_{L^2(\E)} \|\dx u(t) - \dx \widehat u(t)\|_{L^2(\E)}\\
& \qquad \qquad \ + \overline \chi \|\widehat u(t)\|_{L^\infty(\E)} \|\dx c(t) - \dx \widehat c(t)\|_{L^2(\E)} \|\dx u(t) - \dx \widehat u(t)\|_{L^2(\E)}=(i)+(ii).
\end{align*}
With similar arguments as were used to bound the term $(*)$ in the proof of Lemma~\ref{lem:local2},
the first term can be further estimated by 
\begin{align*}
(i) &\le  \frac{\underline \alpha}{4} \|\dx u(t) - \dx \widehat u(t)\|^2_{L^2(\E)} + C \|u(t) - \widehat u(t)\|_{L^2(\E)}^2
\end{align*}
with $C$ depending on the graph, on the bounds in assumption (A1), on the norms $\|c_0\|_{H^1(\E)}$ and $\|u_0\|_{L^2(\E)}$ of the initial values, as well as on $R$ and $T(R)$ as required. 
Using Lemma~\ref{lem:local2}, Lemma~\ref{lem:interpolation}, and \eqref{eq:aux}, the second term can be estimated by
\begin{align*}
(ii) \le \frac{\underline\alpha}{4} \|\dx u(t) - \dx \widehat u(t)\|_{L^2(\E)}^2 + C' \overline \delta^2 T \|z - \widehat z\|_{L^\infty(0,T;L^2(\E))}
\end{align*}
with $C'$ depending on the problem data like $C$ as required. 
A combination of these estimates and an application of Lemma~\ref{lem:gronwall} finally yields the result.
\end{proof}

Problem~\eqref{eq:sys1}--\eqref{eq:sys6} is equivalent to the fixed-point problem 
$u=\Phi_T(u)$. As a direct consequence of the previous Lemmas, one can see that for all $0<T \le T'(R)$ with $T'(R)$ 
chosen sufficiently small, $\Phi_T$ maps $B_{R,T}=\{z \in L^\infty(0,T;L^2(\E)) : \|z\|_{L^\infty(0,T;L^2(\E))} \le R\}$ into itself and is a contraction. Hence, Banach's fixed-point theorem guarantees the existence of a unique fixed-point $u \in B_{R,T}$ with $u=\Phi_T(u)$. An application of Lemma~\ref{lem:local3} then yields the assertion of the theorem.

\section{Proof of Theorem~\ref{thm:reg}}

By formally differentiating the system \eqref{eq:sys1}--\eqref{eq:sys2} we obtain 
\begin{align}
\dt w - \dx(\alpha \dx w) &= \dx(\chi w \dx c) + \dx(\chi u \dx d), \label{eq:diff1}\\
\dt d - \dx(\beta \dx d) &= \delta w - \gamma d, \label{eq:diff2}
\end{align}
where $w = \dt u$ and $d = \dt c$ are the derivatives of the solution $(u,c)$.
Similarly as in the previous section, we will prove existence of local solutions via Banach's fixed-point theorem. 
To this end, we consider the following linearized system 
\begin{alignat}{1}
\la \dt w(t),v \ra_{\E} + \la \alpha \dx w(t), \dx v \ra_{\E} = \la \chi w(t) \dx c(t), \dx v \ra_{\E} + \la \chi u(t) \dx d(t), v \ra_{\E}, \label{eq:lindt1}\\
\la \dt d(t),q \ra_{\E} + \la \beta \dx d(t), \dx q \ra_{\E} + \la \gamma d(t), q\ra_{\E} = \la \delta z(t), q \ra_{\E},  \label{eq:lindt2}
\end{alignat}
for all $v,q \in H^1(\E)$ and $0 \le t \le T$ with $z\in L^{\infty}(0,T;L^2(\E))$ given.

\begin{lemma} \label{lem:reg1}
Let (A1)--(A2) hold and $T>0$. Then for any $w_0 \in L^2(\E)$, $d_0 \in H^1(\E)$, and for any $z \in L^\infty(0,T;L^2(\E))$, the linearized system \eqref{eq:lindt1}--\eqref{eq:lindt2} 
has a unique weak solution $w \in L^2(0,T;H^1(\E)) \cap H^1(0,T;H^1(\E)')$ and $d \in L^\infty(0,T;H^1(\E)) \cap H^1(0,T;L^2(\E))$ with initial values $w(0)=w_0$ and $d(0)=d_0$. Moreover, 
\begin{align*} 
&\|w\|_{L^{\infty}(0,T;L^2(\E))} + \|w\|_{L^2(0,T;H^1(\E))} \\
&\qquad \le e^{CT}\left(\|w_0\|_{L^2(\E)} 
+ C'(T^{1/2}\|z\|_{L^{\infty}(0,T;L^2(\E))} + \|\dx d_0\|_{L^2(\E)})\|u\|_{L^2(0,T;H^1(\E))}\right).
\end{align*}
\end{lemma}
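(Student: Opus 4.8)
The plan is to proceed exactly as for the linearized problem \eqref{eq:lin1}--\eqref{eq:lin2} underlying Theorem~\ref{thm:local} (cf.\ Lemma~\ref{lem:local3}): first solve the decoupled second equation \eqref{eq:lindt2} for $d$, then insert $d$ as a source term into the first equation \eqref{eq:lindt1} and solve for $w$, and finally combine the a-priori bounds. Equation \eqref{eq:lindt2} is of exactly the form \eqref{eq:lin2} with $c$ replaced by $d$ and $c_0$ by $d_0$, so Lemma~\ref{lem:local1} applies verbatim: it yields a unique $d \in L^2(0,T;H^1(\E)) \cap H^1(0,T;H^1(\E)')$ with $d(0)=d_0$, and since $d_0 \in H^1(\E)$ also $d \in L^\infty(0,T;H^1(\E)) \cap H^1(0,T;L^2(\E))$ together with the bound $\|\dx d\|_{L^\infty(0,t;L^2(\E))} \le C(\|\dx d_0\|_{L^2(\E)} + t^{1/2}\|z\|_{L^\infty(0,t;L^2(\E))})$ for $0 \le t \le T$, with $C$ depending only on the bounds in (A1). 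Using the embedding $H^1(\E)\hookrightarrow L^\infty(\E)$ on the finite metric graph, one then checks that the new source $\chi u \dx d$ lies in $L^2(0,T;L^2(\E))$ with $\int_0^t \|\chi u\dx d\|_{L^2(\E)}^2 \,ds \le C\overline\chi^2 \|u\|_{L^2(0,t;H^1(\E))}^2 \|\dx d\|_{L^\infty(0,t;L^2(\E))}^2$.

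With $u$, $c$ and $d$ now fixed, \eqref{eq:lindt1} is a linear parabolic equation for $w$ of the same structure as \eqref{eq:lin1} --- in particular the convective coefficient $\chi\dx c(t)$ lies only in $L^2(\E)$, exactly as in Lemma~\ref{lem:local2} --- with the extra source $\la\chi u\dx d,v\ra_\E$. Existence and uniqueness of a weak solution $w \in L^2(0,T;H^1(\E)) \cap H^1(0,T;H^1(\E)')$ with $w(0)=w_0$ follow by Galerkin approximation together with the a-priori estimate below, exactly as for Lemma~\ref{lem:local2}; the inclusion $\dt w \in L^2(0,T;H^1(\E)')$ is then read off the equation, since $\chi w\dx c$ (by $H^1(\E)\hookrightarrow L^\infty(\E)$ and $\dx c\in L^\infty(0,T;L^2(\E))$) and $\chi u\dx d$ both belong to $L^2(0,T;L^2(\E))$.

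For the a-priori estimate I would test \eqref{eq:lindt1} with $v=w(t)$. The left hand side produces $\tfrac12\tfrac{d}{dt}\|w(t)\|_{L^2(\E)}^2 + \underline\alpha\|\dx w(t)\|_{L^2(\E)}^2$. On the right, the convective term $\la\chi w\dx c,\dx w\ra_\E$ is bounded as in Lemma~\ref{lem:local2}: one writes $\|w(t)\|_{L^\infty(\E)}\le \eps_0\|\dx w(t)\|_{L^2(\E)} + C_G(1+\eps_0^{-1})\|w(t)\|_{L^2(\E)}$ via \eqref{interpolationL2}, uses $\|\dx c(t)\|_{L^2(\E)}\le \overline C:=\|\dx c\|_{L^\infty(0,T;L^2(\E))}$, picks $\eps_0$ with $\overline\chi\,\overline C\,\eps_0\le\underline\alpha/4$, and applies Young's inequality to the remaining cross term; the extra source term is bounded by $\overline\chi\|u(t)\|_{L^\infty(\E)}\|\dx d(t)\|_{L^2(\E)}\|w(t)\|_{L^2(\E)}$ and split with Young's inequality. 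Absorbing the $\|\dx w\|_{L^2(\E)}^2$-contributions into the left hand side, Gronwall's inequality (Lemma~\ref{lem:gronwall}) and $\|u(t)\|_{L^\infty(\E)}\le C\|u(t)\|_{H^1(\E)}$ then yield
\begin{align*}
\|w(t)\|_{L^2(\E)}^2 + \underline\alpha\int_0^t\|\dx w(s)\|_{L^2(\E)}^2\,ds \le e^{Ct}\Big(\|w_0\|_{L^2(\E)}^2 + C'\|u\|_{L^2(0,t;H^1(\E))}^2\,\|\dx d\|_{L^\infty(0,t;L^2(\E))}^2\Big).
\end{align*}
Inserting the Step-1 bound for $\|\dx d\|_{L^\infty(0,t;L^2(\E))}$, taking the maximum over $t\in[0,T]$, controlling $\|w\|_{L^2(0,T;H^1(\E))}^2 \le T\|w\|_{L^\infty(0,T;L^2(\E))}^2 + \int_0^T\|\dx w\|_{L^2(\E)}^2$, and taking a square root gives the asserted inequality; uniqueness of $w$ follows by applying the estimate to the difference of two solutions with $w_0=0$, $z=0$, $d_0=0$, and uniqueness of $d$ is part of Lemma~\ref{lem:local1}.

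The only point that is not entirely mechanical is the control of the convective term $\la\chi w\dx c,\dx w\ra_\E$ in the energy estimate, where $\chi\dx c$ is merely square integrable in space; this is precisely the difficulty already overcome in the proof of Lemma~\ref{lem:local2} and is handled through the Gagliardo--Nirenberg-type inequality \eqref{interpolationL2}. The remainder is bookkeeping of constants and a repetition of the Galerkin scheme already carried out for Theorem~\ref{thm:local}.
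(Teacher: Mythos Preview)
Your proposal is correct and follows essentially the same approach as the paper: apply Lemma~\ref{lem:local1} to obtain $d$ and the bound $\|\dx d\|_{L^\infty(0,t;L^2(\E))} \le C(\|d_0\|_{H^1(\E)} + t^{1/2}\|z\|_{L^\infty(0,t;L^2(\E))})$, then test \eqref{eq:lindt1} with $v=w(t)$, control the convective term via the interpolation inequality \eqref{interpolationL2} exactly as in Lemma~\ref{lem:local2}, handle the extra source term $\la\chi u\,\dx d,\dx w\ra_\E$ by H\"older and Young, and conclude with Gronwall. The paper's proof is slightly terser but structurally identical; your additional remarks on reading off $\dt w\in L^2(0,T;H^1(\E)')$ from the equation and on uniqueness are correct elaborations.
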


\begin{proof}
It suffices to prove the a-priori estimate. 
Existence of a unique solution can then be obtained by Galerkin approximation.
According to Lemma~\ref{lem:local1}, we have 
\begin{align*} 
\|\dx d\|_{L^{\infty}(0,t;L^2(\E))} \le C(\|d_0\|_{H^1(\E)} + t^{1/2} \|z\|_{L^{\infty}(0,t;L^2(\E))}) 
\end{align*}
for a.a. $t\in [0,T]$. 
Testing \eqref{eq:lindt1} with $v = w(t)$ we obtain the differential inequality
\begin{align*}
\frac{1}{2} \frac{d}{dt} \|w(t)\|_{L^2(\E)}^2 + \underline\alpha \| \dx w(t) \|_{L^2(\E)}^2 
&\leq \overline\chi \left(\|w(t)\|_{L^{\infty}(\E)} \| \dx c(t) \|_{L^2(\E)} \|\dx w(t)\|_{L^2(\E)} \right. \\
&\qquad \left. + \|u(t)\|_{L^{\infty}(\E)} \|\dx d(t)\|_{L^2(\E)} \| \dx w(t) \|_{L^2(\E)}\right).
\end{align*}
Integrating between $0$ and $t$, using that $c\in L^{\infty}(0,T;H^1(\E)), u\in L^2(0,T;H^1(\E))$, applying the interpolation inequality \eqref{interpolationL2} and Gronwall's lemma, we arrive at
\begin{align*}
\|w(t)\|_{L^2(\E)}^2 + &\int_0^t \|\dx w(s)\|_{L^2(\E)}^2 ds \le e^{Ct} \big( \|w_0\|_{L^2(\E)}^2 \\
&+ C'(t \sup_{0\le s\le t} \|z(s)\|^2_{L^2(\E)} + \| \dx d_0 \|^2_{L^2(\E)}) \int_0^t \|u(s)\|_{H^1(\E)}^2 ds \big).
\end{align*}
The bound of the lemma is then obtained by taking the supremum over $t$ on both sides.
\end{proof}

Similarly to the previous section, we can now define the fixed-point map
\begin{align}
	\Psi_T: X_T \to X_T, \qquad z \mapsto w,
\end{align}
where $X_T = L^{\infty}(0,T;L^2(\E))$ is chosen as before and where $w$ denotes the first component of the  solution of system \eqref{eq:lindt1}--\eqref{eq:lindt2} with given initial values $w_0 \in L^2(\E)$ and $d_0 \in H^1(\E)$. 
By the previous lemma, $\Psi_T$ is well-defined and maps $X_T$ into itself. 
As a next step, we verify that $\Psi_T$ is a contraction, if $T$ is chosen sufficiently small.
\begin{lemma} \label{lem:reg2}
Let the assumptions of Lemma~\ref{lem:reg1} be valid. Then 
\begin{align*}
\|\Psi_{T}(z) - \Psi_{T}(\widehat z) \|_{X_{T}} \le L(T) \|z - \widehat z\|_{X_{T}} \qquad \forall z,\widehat z \in X_T
\end{align*}
with Lipschitz constant $L(T)=C' P(T) T^{1/2} e^{C Q(T)}$, where $C',C$ are constants and $P(T),Q(T)$ are polynomials of $T$ that only depend on the problem data and the geometry of the graph.
\end{lemma}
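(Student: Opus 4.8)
The plan is to follow closely the argument used for Lemma~\ref{lem:local5}, now exploiting that --- in contrast to the nonlinear map $\Phi_T$ --- the map $\Psi_T$ is \emph{affine}, since the coefficient functions $u,c$ occurring in \eqref{eq:lindt1}--\eqref{eq:lindt2} are fixed; hence the Lipschitz estimate may be established for all $z,\widehat z\in X_T$ without restricting to a ball. Let $(w,d)$ and $(\widehat w,\widehat d)$ denote the solutions of \eqref{eq:lindt1}--\eqref{eq:lindt2} corresponding to the right-hand sides $z$ and $\widehat z$, with the same initial data $w_0\in L^2(\E)$, $d_0\in H^1(\E)$. Subtracting the two copies of \eqref{eq:lindt2} shows that $d-\widehat d$ solves the linear equation treated in Lemma~\ref{lem:local1} with vanishing initial value and source $\delta(z-\widehat z)$, so that its second estimate yields
\begin{align*}
\|\dx d-\dx\widehat d\|_{L^\infty(0,T;L^2(\E))}\le C\,T^{1/2}\,\|z-\widehat z\|_{X_T}.
\end{align*}

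Next, subtracting the two copies of \eqref{eq:lindt1} shows that $e:=w-\widehat w$ satisfies $e(0)=0$ and
\begin{align*}
\la\dt e(t),v\ra_\E+\la\alpha\dx e(t),\dx v\ra_\E
=\la\chi e(t)\dx c(t),\dx v\ra_\E+\la\chi u(t)\,(\dx d(t)-\dx\widehat d(t)),v\ra_\E
\end{align*}
for all $v\in H^1(\E)$ and a.a.\ $0\le t\le T$. I would test this with $v=e(t)$ and treat the first term on the right exactly as the term $(*)$ in the proof of Lemma~\ref{lem:local2}, using the interpolation inequality \eqref{interpolationL2}, the uniform bound on $\|\dx c(t)\|_{L^2(\E)}$ from Theorem~\ref{thm:global}, and a sufficiently small interpolation parameter to absorb the arising gradient term into $\underline\alpha\|\dx e(t)\|_{L^2(\E)}^2$, leaving a contribution $C\|e(t)\|_{L^2(\E)}^2$. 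The coupling term is split by Young's inequality into $\tfrac12\|e(t)\|_{L^2(\E)}^2$ and $\tfrac{\overline\chi^2}{2}\|u(t)\|_{L^\infty(\E)}^2\|\dx d-\dx\widehat d\|_{L^\infty(0,T;L^2(\E))}^2$, and into the latter I insert the bound from the first step. Integrating in time, using $\int_0^T\|u(t)\|_{L^\infty(\E)}^2\,dt\le C\|u\|_{L^2(0,T;H^1(\E))}^2$ together with the a-priori bound \eqref{eq:aux6} of Theorem~\ref{thm:global} (which makes $\|u\|_{L^2(0,T;H^1(\E))}^2$ bounded by a polynomial in $T$), and applying Gronwall's lemma~\ref{lem:gronwall}, one arrives at
\begin{align*}
\|e\|_{L^\infty(0,T;L^2(\E))}^2\le C'\,T\,\|u\|_{L^2(0,T;H^1(\E))}^2\,e^{C\,Q(T)}\,\|z-\widehat z\|_{X_T}^2,
\end{align*}
where the exponent $C\,Q(T)$ collects the polynomial-in-$T$ growth of the Gronwall constant coming from $\|\dx c\|_{L^\infty(0,T;L^2(\E))}\le P(T)$. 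Taking square roots and bounding $T^{1/2}\|u\|_{L^2(0,T;H^1(\E))}$ by $T^{1/2}P(T)$ gives the asserted estimate with $L(T)=C'\,P(T)\,T^{1/2}\,e^{C\,Q(T)}$.

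I expect the only genuine difficulty to be the coupling term $\la\chi u\,(\dx d-\dx\widehat d),v\ra_\E$: since under (A1)--(A2) the factor $u$ lies only in $L^2(0,T;H^1(\E))\subset L^2(0,T;L^\infty(\E))$ and not in $L^\infty$ in time, one cannot pull all norms out in $L^\infty(0,T;\cdot)$, and must instead keep the product under the time integral and exploit the time-integrability of $\|u(t)\|_{L^\infty(\E)}$; this is also what forces the factor $T^{1/2}$ and the polynomial $P(T)$ into $L(T)$ rather than a plain power of $T$. The remaining work is routine bookkeeping of the polynomial-in-$T$ dependence of the constants through the global bounds of Theorem~\ref{thm:global}.
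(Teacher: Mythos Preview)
Your proposal is correct and follows essentially the same route as the paper's proof: both derive $\|\dx d-\dx\widehat d\|_{L^\infty(0,T;L^2(\E))}\le C\,T^{1/2}\|z-\widehat z\|_{X_T}$ from Lemma~\ref{lem:local1}, test the difference equation for $w-\widehat w$ with itself, control the term involving $\dx c$ via the interpolation inequality \eqref{interpolationL2} and the polynomial bound on $\|\dx c\|_{L^\infty(0,T;L^2(\E))}$ from Theorem~\ref{thm:global}, bound the coupling term using $\|u(t)\|_{L^\infty(\E)}\le C\|u(t)\|_{H^1(\E)}$ together with the $L^2(0,T;H^1(\E))$ estimate \eqref{eq:aux6}, and close with Gronwall. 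The only cosmetic difference is that you pair the coupling term with $v$ (as written in \eqref{eq:lindt1}) whereas the paper's proof pairs it with $\dx v$; either choice leads to a term that can be absorbed or controlled in the same way, and the resulting Lipschitz constant has the stated form $L(T)=C'\,P(T)\,T^{1/2}\,e^{C\,Q(T)}$.
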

\begin{proof}
Let $(w,d)$ and $(\widehat w, \widehat d)$ be two solutions of \eqref{eq:lindt1}--\eqref{eq:lindt2} with the same initial values $w_0\in L^2(\E)$ and $d_0\in H^1(\E)$, but with different data $z, \widehat z \in X_T$. 
With the same arguments as in the proof of Lemma~\ref{lem:local5}, we obtain
\begin{align}
	\|\dx d - \dx \widehat d \|_{X_T} \leq \overline \delta T^{1/2} \|z - \widehat z\|_{X_T}. \label{eq:contr}
\end{align}
Moreover, the difference $w - \widehat w$ satisfies $w(0) - \widehat w(0) = 0$ and
\begin{align*}
\la\dt (w(t) - \widehat w(t)),v\ra_\E &+ \la\alpha \dx (w(t)-\widehat w(t)),\dx v\ra_\E \\
&= \la\chi (w(t) - \widehat w(t)) \dx c(t),\dx v\ra_\E + \la\chi \widehat u(t) \dx(d(t) - \widehat d(t)), \dx v\ra_\E.
\end{align*}
By choosing $v=w(t) - \widehat w(t)$, one can then deduce that 
\begin{align*}
\frac{1}{2} \frac{d}{dt} &\|w(t) - \widehat w(t)\|^2_{L^2(\E)} + \underline\alpha \|\dx w(t) - \dx \widehat w(t)\|_{L^2(\E)}^2 \\
&\le \overline\chi(\|w(t) - \widehat w(t)\|_{L^{\infty}(\E)} \|\dx c(t)\|_{L^2(\E)} \|\dx w(t) - \dx \widehat w(t)\|_{L^2(\E)} 
	\\&\quad+ \|u(t)\|_{L^{\infty}(\E)} \| \dx d(t) - \dx \widehat d(t) \|_{L^2(\E)} \|\dx w(t) - \dx \widehat w(t)\|_{L^2(\E)}),
\end{align*}
Applying the interpolation inequality \eqref{interpolationL2}, the estimate \eqref{eq:contr}, and using the polynomial bounds from Theorem~\ref{thm:global}, we can deduce the assertion of the theorem by integration and an application of Gronwall's lemma.
\end{proof}

\subsection*{Proof of Theorem~\ref{thm:reg}}
Lemma~\ref{lem:reg2} shows that $\Psi_T$ is a contraction on $X_T$ for $T$ sufficiently small. 
By Banach's fixed-point theorem, the system \eqref{eq:diff1}--\eqref{eq:diff2} thus has a unique local solution. 
Since the system is linear w.r.t. $w$ and $d$ we can extend the solution to arbitrary time-intervals via a bootstrap argument.  
The condition (A3) of Theorem~\ref{thm:reg} guarantees that $w_0=\dt u(0) \in L^2(\E)$ and $d_0=\dt c(0) \in H^1(\E)$. 
The piecewise $H^2$-bound of Theorem~\ref{thm:reg} is obtained from the strong form \eqref{eq:sys1}--\eqref{eq:sys2} 
of the system and the previous estimates.


\end{document}